\newcommand{\R}{\mathbb{R}}
\newcommand{\Z}{\mathbb{Z}}
\newtheorem{thm}{Theorem}[section]
\newtheorem{prop}[thm]{Proposition}
\newtheorem{lemma}[thm]{Lemma}
\newtheorem{cor}[thm]{Corollary}
\newtheorem{prob}[thm]{Problem}
\newtheorem{que}[thm]{Question}
\newtheorem{obs}[thm]{Observation}
\theoremstyle{definition}
\newtheorem{remark}[thm]{Remark}
\newtheorem{conj}[thm]{Conjecture}
\begin{document}
\title[\textsc{Solvability of Mazes by Blind Robots}]{\textsc{Solvability of Mazes by Blind Robots}}
\author{Stefan David and Marius Tiba}
\address{University of Cambridge}
\email{sd637@cam.ac.uk}
\email{mt576@cam.ac.uk}
\begin{abstract}

In this paper we introduce and investigate a new type of automata which turns out to be rich in deep and complex phenomena. For our model, a maze is a countable strongly connected digraph called the board together with a proper colouring of its edges (the edges leaving a vertex have distinct colours) and two special vertices: the origin and the destination. A pointer or robot starts at the origin of a maze and moves naturally between its vertices, according to a finite or infinite sequence of specific instructions from the set of all colours called an algorithm; if the robot is at a vertex for which there is no out-edge of the colour indicated by the instruction, it remains at that vertex and proceeds to execute the next instruction in the sequence. The central object of study is the existence of algorithms that simultaneously solve, that is guide the robot to visit the destination in, certain large sets of mazes.

One of the most natural and interesting sets of mazes arises from the square lattice $\Z^2$ viewed as a graph with arbitrarily many edges removed (each edge corresponds to a pair of opposite directed edges), together with the suggestive colouring that assigns to each directed edge the corresponding cardinal direction. In this set-up, a research question of Leader and Spink from 2011, which proved to be very profound, asks whether there exists an algorithm which solves this set of mazes.

In this paper we make progress towards this question. We consider the subset of all such mazes which have arbitrarily many horizontal edges removed but only finitely many vertical edges removed in consecutive columns, and construct an algorithm which solves this subset of mazes.  
\end{abstract}

\maketitle
\parindent 20pt
\parskip 0pt

\section{Introduction}\label{intro}
\indent
Though studied for decades, recent important breakthroughs in automata theory have turned it into an important field of study in discrete mathematics and theoretical computer science. For a comprehensive introduction in the theory and other related subjects, see the book of Hopcroft, Motwani and Ullman \cite{Hopcroft}.

One of the long standing famous conjectures in automata theory is the road colouring problem introduced in 1970 by Adler, Goodwyn and Weiss in \cite{Adler1}, \cite{Adler2}. The conjecture states that a strongly connected digraph in which all vertices have the same out-degree, which is aperiodic (i.e. the gcd of the lengths of all of its oriented cycles is one) has a synchronising colouring. A synchronising colouring of a strongly connected digraph $G$ of uniform out-degree $k$ is a labelling of the edges of $G$ with colours $1, \hdots, k$ such that all the vertices have out-edges of all colours and for every vertex $v$ of $G$ there exists a word $W_v$ in the alphabet of colours such that every path in $G$ corresponding to $W_v$ terminates at $v$. We note that the existence of a synchronising colouring makes it possible to reset the automaton back to its original state after the detection of an error. In fact, it is because of this important property that the road coloring problem has received so much attention over the past few decades. There have been many positive partial results published over the years, such as \cite{New1}, \cite{New2}, \cite{New3}. In 2009, Trahtman made one of the most notable advances in the field by proving this conjecture in \cite{Trahtman}. 

Another famous related problem in the field is {\v C}ern{\' y}'s conjecture which appeared in \cite{CCerny} in 1964 and states that the length of the shortest synchronising word for any $n$-state deterministic finite automaton is bounded above by $(n-1)^2$ (for more details see \cite{Pin15}, \cite{Trahtman16}). For some partial results concerning {\v C}ern{\' y}'s conjecture see \cite{Grech}, \cite{Cerny}.

In this paper we introduce and study a new model of automata which turns out to be abundant in profound and intricate phenomena. This model does not seem to have occurred in literature, and is motivated by the following coffee time problem of Leader, popularised by Balister.

\begin{prob}\label{pb1}
Consider the classical $8 \times 8$ chessboard as the board of a maze, where every small square is a room, such that between any two adjacent rooms there is either a wall that prevents the transit between them, or there is no wall and transit is possible. Additionally, the boundary of the board is formed only by walls.

Say that a robot starts in one of the $64$ squares and it receives a sequence of instructions from the set of cardinal directions: \emph{north}, \emph{south}, \emph{east}, \emph{west}. Each time the robot receives such an instruction, it executes it by moving to the corresponding adjacent room, provided there is no wall to prevent it from moving as instructed; if there is such a wall, the robot simply does not move and it continues with the following instruction. The robot does not give any feedback whether it moves or not when executing an instruction. 

Naturally, the board of the maze can be regarded as a subgraph of the square lattice $8 \times 8$ where there is an edge between two vertices if and only if there is no wall between the corresponding squares.  Without knowing the subgraph and the starting vertex of the robot, can one write a finite sequence of instructions such that at the end the robot is guaranteed to have visited all accessible vertices?
\end{prob}
To see the existence of such an algorithm, simply enumerate all the possible mazes and solve them one by one, keeping track of the updated position of the robot when passing to a new maze. A related problem which can be solved in the same way is the following:
\begin{prob}\label{pb2}
Consider a subgraph of some finite dimensional hypercube $Q_1, Q_2, \hdots$ as the board of a maze. Say that a robot starts in one of the vertices and it receives a sequence of instructions from the set of coordinate directions $\pm e_1, \pm e_2, \hdots$. Each time the robot receives such an instruction, it executes it by moving to the corresponding adjacent vertex, provided there is an edge between these two vertices; if there is no such edge, the robot simply does not move and it continues with the following instruction. Without knowing the subgraph and the starting vertex of the robot, can one write an infinite sequence of instructions such that at the end the robot is guaranteed to have visited all accessible vertices?
\end{prob}

Problem~\ref{pb1} lead Spink and Leader to ask the following research question, which was later passed to us by Balister.

\begin{que}\label{qloha}
What happens if in Problem~\ref{pb1} we replace the (finite) $8 \times 8$ square lattice with the infinite square lattice $\Z^2$?
\end{que}

To our knowledge, Question~\ref{qloha} turns out to be extremely difficult to answer. In this paper we make progress towards answering this question, by establishing the following main result. 

\begin{thm}\label{mthm}
There exists an infinite sequence of instructions for a robot to visit all accessible vertices in any maze for which the board is the graph $\mathbb{Z}^2$  with arbitrarily many horizontal edges removed but only finitely many vertical edges removed in consecutive columns.
\end{thm}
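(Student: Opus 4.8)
The plan is to reduce covering the component to a blind traversal of a tame, locally finite auxiliary graph, and then to carry that traversal out in ever-larger finite phases whose accumulated effect sweeps everything.

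By translating we may assume the origin is $(0,0)$; since removed edges come in antiparallel pairs the maze is effectively undirected, the robot starts at $(0,0)$, and it suffices to visit its whole connected component $C$. First I would record the coarse shape of $C$. Call a \emph{segment} a maximal interval of a single column across which no vertical edge is missing; by hypothesis every column is cut into finitely many segments, two of them infinite rays. A short connectivity argument shows that the set $R\subseteq\mathbb Z$ of columns met by $C$ is an interval containing $0$, that $C$ meets each column of $R$ in a union of finitely many of its segments, and that $C$ is exactly the union of these segments, with adjacent ones glued wherever a horizontal edge survives (all vertical edges of $C$ lie inside single segments). Encode this in the \emph{segment graph} $\mathcal S$: one vertex per segment occurring in $C$, an edge between two segments of adjacent columns whenever a surviving horizontal edge joins them. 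Then $\mathcal S$ is connected, stratified by column index, and locally finite, since a segment meets only the finitely many segments of its two neighbouring columns. Covering $C$ now splits into two jobs: (a) drive the robot into every vertex of $\mathcal S$, and (b) once a segment is entered, sweep it completely. Job (b) I would handle by a dovetailing device run in the background: the word $N\,S^2\,N^3\,S^4\cdots$, stopped after $N^{2k-1}S^{2k}$, sweeps any segment of length $<k$ through the robot's current cell, and its infinite tail sweeps any ray; interleaving longer and longer such blocks (keeping track of their effect on the robot's height) between the moves of (a) eventually clears every segment the robot ever occupies. So everything comes down to (a).

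For (a) the natural target is: for every radius $r$ produce a \emph{finite} word $A_r$ such that, in every maze $M$ of the class and from every start cell $p$, running $A_r$ from $p$ makes the robot visit every vertex of $\mathrm{Comp}_M(p)$ within $M$-distance $r$ of $p$. Granting this, the algorithm is the concatenation $A_{r_1}A_{r_2}A_{r_3}\cdots$: since each $A_r$ is a finite word the robot moves only a bounded distance during it, so one may choose the radii $r_n$ to grow fast enough that $r_{n+1}$ exceeds the robot's distance from $(0,0)$ after the first $n$ blocks by an amount tending to infinity with $n$; then block $n{+}1$ visits every vertex of $C$ within that distance of $(0,0)$, and as every vertex of $C$ lies at finite $M$-distance from $(0,0)$, all of $C$ is eventually visited. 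Because $B_M(p,r)$ sits inside the $\mathbb Z^2$-ball of radius $r$ about $p$, the restriction of $M$ to it realises one of only finitely many admissible patterns, which is the finite situation of Problem~\ref{pb1}; so the hope would be to solve these patterns one after another, translation making the construction uniform in $p$.

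The serious obstacle, and the step I expect to be hardest, is that this last hope does not work verbatim: $\mathbb Z^2$ has no walls, so while $A_r$ runs the robot can slip out of the ball into a part of the board about which nothing is assumed and return in a way that depends on the whole maze — exactly the phenomenon that keeps Question~\ref{qloha} open — and ``solving'' one local pattern may strand the robot arbitrarily far away. What a finite board provides for free, namely confinement and an effective ``reset to the origin'', must here be built, and this is precisely where the hypothesis on vertical edges is essential: since every column loses only finitely many vertical edges, every column carries arbitrarily long vertical rails, and by first driving the robot far up (or down) its current column one can force subsequent horizontal probing to take place in a region one controls. The plan is to organise both the exploration of a target region of $\mathcal S$ and the return of the robot around a ``comb'' routine — in the current segment climb as high as possible, attempting a horizontal step at every height on the way down, then descend as low as possible and do the same — and to prove that one pass of the comb produces a prescribed net horizontal progress while moving the robot by only a controlled number of columns; iterating the comb and alternating its direction should then both cover the target region and herd the robot back near the cell where the current block began. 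Establishing this drift bound uniformly over all mazes in the class — showing the robot can never be swept away faster than the algorithm can recover it — is the technical core of the argument and the part I expect to fight hardest for.
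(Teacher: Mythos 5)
Your plan correctly identifies where the difficulty sits, but it does not close it: the entire content of the theorem is concentrated in your claim (a) --- the existence, for each $r$, of a single finite word $A_r$ that from \emph{every} start cell, in \emph{every} maze of the class, visits everything within $M$-distance $r$ --- and this is exactly the step you yourself defer as ``the part I expect to fight hardest for''. The comb routine is asserted, not analysed. Concretely: ``climb as high as possible in the current segment'' is not a finite instruction when the segment is an infinite ray; after any north--south oscillation inside a finite segment the robot ends at a maze-dependent height (the bottom of the segment), so its latitude is unknown from then on, which undercuts your parenthetical ``keeping track of their effect on the robot's height''; and ``attempting a horizontal step at every height on the way down'' can move the robot into different neighbouring segments at many different heights depending on which horizontal edges survive, so its position after one pass of the comb is an uncontrolled function of the maze. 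Establishing the drift bound uniformly over the class is not a technical afterthought to be supplied later; it \emph{is} the theorem.

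There is also a structural reason to doubt that claim (a) can be obtained in the uniform form your concatenation argument needs. You require one word $A_r$ valid simultaneously for all mazes in the class and all start cells, so that block $n+1$ works from whatever position block $n$ left the robot in. If the set of boards were compact in the product topology, such uniform finite words could be extracted from an infinite solving algorithm by a compactness argument (this is exactly Observation~\ref{ch7obs1}); but the set of boards with finitely many vertical non-edges is not closed --- limits can acquire infinitely many --- so no such extraction is available, and the uniform statement does not follow even from the theorem itself. The paper avoids needing this uniformity altogether: it covers the class by countably many subclasses $F_i$, each fixing not only the local picture near the origin but a long list of global structural data (the tertiary rectangle and its parameters: passes, pipes, cutoffs, bumps, paired HNEs, magical rows, ascending chains), and for each $F_i$ and each already-executed prefix $X$ builds a finite continuation tailored to that data (Parts I--III of Section~\ref{final}, resting on Lemma~\ref{l2}); the two regimes (no vertical non-edges, and finitely many in consecutive columns) are then glued by Proposition~\ref{ch2prop1}, applied where compactness does hold. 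To repair your argument you would need either to prove the uniform $A_r$ directly, which I do not believe is easier than the theorem, or to retreat to a maze-dependent countable cover --- at which point you are reconstructing the paper's proof.
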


We note that Theorem~\ref{mthm} follows immediately from two separate beautiful results, Theorem~\ref{ch2th1} and Theorem~\ref{ch2th2} binded together by the more technical Proposition~\ref{ch2prop1}.

The structure of the paper is as follows. In Section~\S\ref{prelims} we start by developing a general set-up that encompasses a class of similar problems which we call ``solvability of mazes by blind robots''. We then return to the Leader-Spink problem and state all our main results in Section~\S\ref{results}. In Section~\S\ref{BC} we present a toy model that represents the foundation on which the general model is constructed. As part of the toy model, we prove Theorem~\ref{ch2th1}; this allows us to introduce and investigate some generic algorithms that are used as building blocks in the proof of Theorem~\ref{ch2th2}. In Section~\S\ref{fnv} we present a series of technical definitions that are used to construct a countable cover of the set of mazes in Theorem~\ref{ch2th2} with subsets of mazes that we can treat individually. In Section~\S\ref{final} we give the constructive proof of Theorem~\ref{ch2th2}. We continue with the proof of the technical result Proposition~\ref{ch2prop1} in Section~\S\ref{sectiunea1}. Finally, in Section~\S\ref{conclusions} we present several further directions of research and some of our conjectures.

\section{Preliminaries}\label{prelims}
\indent
In this section we introduce the general framework for our model giving formal definitions and some relevant examples. 

A \emph{maze} is a quadruple $(M, c, o, d)$, where $M$ is a countable strongly connected digraph called the board, and $c:E(M) \longrightarrow \mathbb{N}$ is a proper colouring of the edges of $M$, i.e. one in which the out-edges from any vertex have distinct colours. Further, $o$ and $d$ are two special vertices of $M$ called the \emph{origin} and the \emph{destination}, respectively. 

An \emph{instruction} $I \in \mathbb{N}$ is an element from the set of colours $\mathbb{N}$. An \emph{algorithm} 
\[
A = (I_i)_{i=1}^n \text{ or } A = (I_i)_{i=1}^\infty
\]
is a finite or infinite sequence of instructions. A \emph{subalgorithm} $A'$ of an infinite algorithm $A$ as above is any truncation of $A$ of the form
\[
A' = (I_i)_{i=k}^j \text{ or } A' = (I_i)_{i=k}^\infty,
\]
for some $k \leq j$. Similarly, a \emph{subalgorithm} $A'$ of a finite algorithm $A= (I_i)_{i=1}^n$ is any truncation of $A$ of the form $A' = (I_i)_{i=k}^j$ for some $k \leq j \leq n$. In order to describe dynamically our process of visiting the graph we look at the following model.

Given an algorithm $A = (I_i)_{i=1}^\infty$ and a maze $(M, c, o, d)$, a \emph{robot} - which is just a travelling tracking object or a pointer - starts at the origin $o$ and moves in between the vertices of $M$, as it follows the instructions $I_1, I_2, \hdots$ one by one in order: for $n \in \mathbb{N}$ the robot executes the $n$-th instruction $I_n \in \mathbb{N}$ by moving from its current vertex $v$ to the next vertex $w$ if and only if there exists an oriented edge $e$ of colour $I_n$ from $v$ to $w$; if there is no such oriented edge $e$, the robot remains at $v$. In short, we say that the robot \emph{follows the algorithm $A$ in the maze $(M, c, o, d)$}. We say that an algorithm $A$ \emph{solves} the maze $(M, c, o, d)$ if the robot visits the destination $d$ at some time by following $A$ in $(M, c, o, d)$. Similarly, we say that an algorithm $A$ \emph{solves} a set $\mathcal{M}$ of mazes if it solves every maze in $\mathcal{M}$.

We remark that each connected graph can be regarded as a strongly connected digraph by doubling edges. Throughout the paper all the boards of the mazes arise in this way and hence from now on we define the board of a maze to be a graph. We can omit the condition that the graph is connected if we require that the origin and destination are in the same connected component of the graph. 

In this set-up, the fundamental question that arises is the existence of algorithms that simultaneously solve certain natural sets of mazes. As we shall see from the arguments which appear in this paper, and also from our conclusions and open questions in Section~\S\ref{conclusions}, this set up is rich in very deep insights related to the phenomenon of state automata.

For example, we note that there is no algorithm that solves the set of all mazes. Indeed, let us assume for a contradiction that $A = (I_i)_{i=1}^\infty$ does the job. We construct $M$ to be the path with vertices $v_0, v_1, \hdots$ and its only edges $v_i \rightarrow v_{i-1}$ and $v_{i-1} \rightarrow v_i$ for all $i \in \mathbb{N}$. We set $o=v_1$, $d=v_0$ and colour the edge $v_i \rightarrow v_{i+1}$ with colour $I_i$ and the rest of the edges in any way that does not violate the proper colouring condition. A robot that starts in this maze and follows $A$ will visit in order $v_1, v_2, v_3, \hdots$ as it follows $I_1, I_2, \hdots$, never reaching $v_0=d$. As $M$ was constructed to be strongly connected, we have reached a contradiction.

As another example, we note that for any countable set of mazes, there exist algorithms that solve it. In particular, this solves Problem~\ref{pb1} and more importantly, it shows that there exist algorithms that solve the set of all finite mazes. Indeed, let $(M_1, c_1, o_1, d_1), (M_2, c_2, o_2, d_2) \hdots$ be an enumeration of a countable set of mazes $\mathcal{M}$. Considering the strongly connectedness property, given any maze $(M, c, o, d)$ one can write by inspection a finite algorithm that solves the maze. Then, let $A_1$ be any finite algorithm that solves $(M_1, c_1, o_1, d_1)$; let $o_2'$ be the position of the robot after it follows the algorithm $A_1$ in $(M_2, c_2, o_2, d_2)$; let $A_2$ be any finite algorithm that solves $(M_2, c_2, o_2', d_2)$ with origin $o_2'$; let $o_3'$ be the position of the robot after it follows the algorithm $A_1A_2$ in $(M_3, c_3, o_3, d_3)$, etc. Continue in this way to create algorithms $A_1, A_2, \hdots$. We claim that the algorithm $A=A_1 A_2 \hdots$ obtained by concatenating $A_1, A_2, \hdots$ solves the set of mazes $\mathcal{M}$. Indeed, consider the maze $(M_i, c_i, o_i, d_i) \in \mathcal{M}$ for some $i \geq 2$. After the robot follows the initial subalgorithm $A_1 A_2 \hdots A_{i-1}$ of $A$ it gets to the vertex $o_i'$ of $M_i$ and then after it follows $A_i$ it gets to the destination point $d_i$. Trivially, for the maze $(M_1, c_1, o_1, d_1)$, the robot gets to the destination point $d_1$ after it follows the initial subalgorithm $A_1$ of $A$. This shows that $A$ solves $\mathcal{M}$.

We can see from the two examples above that the most interesting cases of our model occur ''in between``, when we consider natural uncountable sets of mazes for which we seek to construct algorithms to solve them. We present below two uncountable sets of mazes, for which it is not hard to find such algorithms.

Firstly, let $Q=Q_1\cup Q_2 \cup \hdots$ be the nested union of all finite dimensional hypercubes i.e. the graph with vertices all possible infinite $\{ 0, 1 \}$ sequences with trailing zeros and edges between those pairs of vertices which differ in only one coordinate. Let $\mathcal{Q}$ be the set of mazes for which the board is a connected subgraph of $Q$ and the colouring assigns to each directed edge the corresponding coordinate direction $\pm e_1, \pm e_2, \hdots$.

Secondly, let $Z=\Z^1\cup \Z^2\cup \hdots$ be the nested union of all finite dimensional integer lattices i.e. the graph with vertices all possible infinite integer sequences with trailing zeros and edges between those pairs of vertices which differ in only one coordinate and the  difference is one. An increasing path inside $Z$, is an infinite path that passes through the origin and always goes in a positive coordinate direction $ +e_1, +e_2, \hdots$. Let $\mathcal{P}$ be the set of mazes for which the board is an increasing path and the colouring assigns to each directed edge the corresponding coordinate direction.

The main object of study in this paper, though much more challenging, resembles Problem~\ref{pb1}. One of the most fundamental and fascinating sets of mazes is the set $\mathcal{M}$ for which the board is the square lattice $\mathbb{Z}^2$ considered as a graph with arbitrarily many edges deleted, the colouring assigns to each directed edge the corresponding cardinal direction from the set $\{N, S, E, W\} = \{S^{-1}, N^{-1}, W^{-1}, E^{-1}\}$, and the origin and destination are in the same connected component. From now on we define a maze to be a triple $(M, \textbf{o}, \textbf{d}) \in \mathcal{M}$, where $M$ is the board, $\textbf{o} = (x_o, y_o)$ is the origin, and $\textbf{d} = (x_d, y_d)$ is the destination.

We call all the vertices in the connected component of the origin \emph{accessible points}. For a vertex $\textbf{x} = (x, y)$ we refer to its coordinates $x$ and $y$ as the \emph{longitude} and \emph{latitude}, respectively. We also label the columns and rows of $\mathbb{Z}^2$ by $c_i = \{(i, y) \mid y \in \mathbb{Z} \}$ and $r_i= \{(x, i) \mid x \in \mathbb{Z} \}$ for $i\in \mathbb{Z}$, respectively. We say that $c_i$ and $c_{i+1}$ are \emph{joined} at latitude $j$ if the vertices $(i, j)$ and $(i+1, j)$ are adjacent.

We often create new algorithms by concatenations of other algorithms, and it is very convenient to use multiplication to denote concatenation. For example 
\[
SNSSNS = SNS^2NS=(SNS)^2
\].

For a finite algorithm $A$, we write $|A|$ for the number of instructions in $A$; similarly we write $|A|_I$ for the number of instructions $I$ in $A$, for all $I \in \{N, S, E, W\}$.

\begin{figure}
\centering
\resizebox{0.5\textwidth}{!}{
\begin{tikzpicture}
    [
        dot/.style={circle,draw=black, fill,inner sep=2pt},
        cross/.style={cross out, draw=black, minimum size=2*(#1-\pgflinewidth), inner sep=4pt, outer sep=4pt},
    ]

\foreach \x in {-4,...,4}{
 \foreach \y in {-4,...,4}{
    \node[dot] at (\x,\y){ };
}}

\draw[line width=0.7mm, red] (-4, -4) -- (-4, 4);
\draw[line width=0.7mm, red] (-3, -4) -- (-3, 4);
\draw[line width=0.7mm, red] (2, -4) -- (2, 4);
\draw[line width=0.7mm, red] (3, -4) -- (3, 4);
\draw[line width=0.7mm, red] (4, -4) -- (4, 4);
\draw[line width=0.7mm, red] (-2, 4) -- (-2, 2);
\draw[line width=0.7mm, red] (-2, 0) -- (-2, -1);
\draw[line width=0.7mm, red] (-2, -2) -- (-2, -3);
\draw[line width=0.7mm, red] (-1, 3) -- (-1, 1);
\draw[line width=0.7mm, red] (-1, -1) -- (-1, -2);
\draw[line width=0.7mm, red] (0, -2) -- (0, -4);
\draw[line width=0.7mm, red] (0, 0) -- (0, 3);
\draw[line width=0.7mm, red] (1, 4) -- (1, 3);
\draw[line width=0.7mm, red] (1, 2) -- (1, -4);

\draw[line width=0.7mm, red] (3, 4) -- (4, 4);
\draw[line width=0.7mm, red] (2, 4) -- (-2, 4);
\draw[line width=0.7mm, red] (-2, 3) -- (0, 3);
\draw[line width=0.7mm, red] (1, 3) -- (3, 3);
\draw[line width=0.7mm, red] (-4, 3) -- (-3, 3);
\draw[line width=0.7mm, red] (-4, 2) -- (-3, 2);
\draw[line width=0.7mm, red] (-1, 2) -- (1, 2);
\draw[line width=0.7mm, red] (2, 2) -- (3, 2);
\draw[line width=0.7mm, red] (-1, 1) -- (2, 1);
\draw[line width=0.7mm, red] (-3, 1) -- (-2, 1);
\draw[line width=0.7mm, red] (3, 1) -- (4, 1);
\draw[line width=0.7mm, red] (-2, 0) -- (-1, 0);
\draw[line width=0.7mm, red] (1, 0) -- (3, 0);
\draw[line width=0.7mm, red] (-2, -1) -- (1, -1);
\draw[line width=0.7mm, red] (2, -1) -- (3, -1);
\draw[line width=0.7mm, red] (-4, -1) -- (-3, -1);
\draw[line width=0.7mm, red] (-4, -2) -- (-2, -2);
\draw[line width=0.7mm, red] (2, -2) -- (4, -2);
\draw[line width=0.7mm, red] (-1, -3) -- (2, -3);
\draw[line width=0.7mm, red] (3, -3) -- (4, -3);
\draw[line width=0.7mm, red] (-2, -4) -- (-1, -4);
\draw[line width=0.7mm, red] (0, -4) -- (2, -4);
\draw[line width=0.7mm, red] (3, -4) -- (4, -4);
\draw[line width=0.7mm, red] (-4, -4) -- (-3, -4);

\draw (3,-2) node[cross] {};

\foreach \x in {-4,...,4}
    \draw (\x,.1) -- node[below,yshift=-1mm] {\x} (\x,-.1);
\foreach \y in {-4,...,4}
    \draw (.1,\y) -- node[below,xshift=-3mm, yshift=3mm] {\y} (-.1,\y);
\draw[->,line width=0.15mm] (0,-4.5) -- (0,4.5);
\draw[->,line width=0.15mm] (-4.5,0) -- (4.5,0);
\end{tikzpicture}
}
\caption{A local representation of a general maze $M$, where edges are marked by red lines. We also mark the destination point $(3, -2)$ with a cross and note that in every maze there is a path from the origin to the destination point. When the robot follows the algorithm $SNWWN$ in $M$ it gets to the point $(-1, 2)$ it follows the path $(0, 0),$ $(0, 0),$ $(0, 1),$ $(-1, 1),$ $(-1, 1),$ $(-1, 2)$; the robot does not move when it executes the first and fourth instructions, as there is no edge between $(0, 0)$ and $(0, -1)$ and between $(-1, 1)$ and $(-2, 1)$.} \label{0}
\end{figure}

\section{Our Results}\label{results}
Our main result, Theorem~\ref{mthm} follows directly from Theorem~\ref{ch2th1}, Theorem~\ref{ch2th2} and Proposition~\ref{ch2prop1}, all of which are interesting results on their own.
\begin{thm}\label{ch2th1}
Let $\mathcal{C} \subseteq \mathcal{M}$ be the set of all mazes for which the board has arbitrarily many horizontal edges removed but no vertical edges removed. There exists an algorithm that solves $\mathcal{C}$.
\end{thm}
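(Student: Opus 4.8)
\emph{Structure and reduction.} The plan is to exploit the one rigid feature of these mazes: since no vertical edge is removed, the instructions $N$ and $S$ \emph{always} move the robot. Consequently every column $c_i$ of the board is a full copy of $\mathbb{Z}$ and lies in a single component, so the component of the origin is a block of consecutive columns $c_a,\dots,c_b$ with $a\le x_o\le b$ (possibly $a=-\infty$ or $b=+\infty$), and for $a\le i<b$ the columns $c_i,c_{i+1}$ are joined exactly at the latitudes of a nonempty set $S_i\subseteq\mathbb{Z}$; write $d_i=\mathrm{dist}(y_o,S_i)$. Note also that after any finite prefix of an algorithm the robot's latitude equals $y_o$ plus the excess of $N$'s over $S$'s in that prefix, a \emph{maze-independent} quantity. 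Since solving $\mathcal{C}$ is equivalent to ``from every origin on every such board the robot visits every accessible vertex'', that is what I would prove.

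\emph{Building blocks.} I would use only blocks that are \emph{net-zero in latitude}; this discipline is the crux, because it keeps the robot at latitude $y_o$ at every block boundary, and a known latitude is the only control ever available. Let $V_w=N^wS^{2w}N^w$ (it sweeps the band $\{y_o-w,\dots,y_o+w\}$ of the current column and returns to latitude $y_o$), let the \emph{eastward scan} be $R_w=E(NE)^w(SE)^{2w}N^w$ (it probes a step East at every latitude of that band, in the order $y_o,y_o+1,\dots,y_o+w,y_o+w-1,\dots,y_o-w$, and ends at latitude $y_o$), and let $L_w$ be its mirror image using $W$. The key progress lemma: if the robot is at $(i,y_o)$, $c_{i+1}$ is accessible and $w\ge d_i$, then $R_w$ leaves the robot at a column $\ge i+1$, its longitude non-decreasing throughout $R_w$ and never exceeding $b$ — because the robot stays in $c_i$ until an $E$ is executed at a latitude of $S_i$, which must happen since every latitude of $[y_o-w,y_o+w]$ is $E$-probed and $R_w$ contains no $W$. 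Symmetrically for $L_w$ and $a$.

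\emph{Visiting every column.} The algorithm is the concatenation $\Phi_1\Phi_2\cdots$ with $\Phi_k=(R_k)^{n_k}(L_k)^{n_k}$, where $n_k\uparrow\infty$ is fixed recursively with $n_k$ exceeding $n_{k-1}\,|L_{k-1}|$ (hence exceeding the number of columns any push in $\Phi_1,\dots,\Phi_{k-1}$ can cross). Let $\alpha_k,\beta_k$ be the robot's column at the start of $\Phi_k$ and at the end of the eastward half of $\Phi_k$. Every column crossed by the westward half of $\Phi_{k-1}$ has its relevant join within $k-1$ of $y_o$; so by the progress lemma the eastward half of $\Phi_k$, having the wider window $k$ and enough blocks, re-crosses all of them and then advances until it meets a column whose join is more than $k$ from $y_o$. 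Hence $\beta_k\ge\beta_{k-1}$, and strictly so once $k>d_{\beta_{k-1}}$. A non-decreasing integer sequence bounded above only by $b$ that cannot be eventually constant below $b$ (otherwise some $S_{\beta^*}$ would be empty) must reach $b$, or tend to $+\infty$ when $b=\infty$; thus $\beta_k\to b$, and symmetrically $\alpha_k\to a$. Since the longitude changes in unit steps, the eastward half of $\Phi_k$ attains every column in $[\alpha_k,\beta_k]$, and these intervals exhaust $[a,b]$, so the robot visits every accessible column.

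\emph{Visiting every vertex, and the main obstacle.} To upgrade ``every column'' to ``every vertex'', interleave an ever-larger vertical sweep after every instruction: if the algorithm just built is $J_1J_2\cdots$, use $J_1V_1J_2V_2J_3V_3\cdots$ instead. Since each $V_t$ leaves the vertex unchanged, the position reached after $J_1\cdots J_t$ is as before, and then $V_t$ sweeps the radius-$t$ band of the current column; the robot's longitude equals any prescribed accessible $j$ at some step of every sufficiently large phase $\Phi_k$, at which step its latitude differs from $y_o$ by at most $k$, while the index of the first step of $\Phi_k$ grows much faster than $k$, so eventually that step index exceeds $|s-y_o|+k$ and $(j,s)$ is visited. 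The main obstacle is precisely the complete absence of horizontal control — no finite instruction word has a maze-independent net horizontal effect, since one missing edge can waste a step, so the robot's longitude can be neither reset nor even bounded — and the way around it is the two devices above: making every block net-zero in latitude (so the latitude is always known) and arranging the phases so that the reach of each scanning sweep is monotone in its window size (so the reaches converge to the two ends of the component despite the uncontrolled drift). Turning this outline into a proof amounts to the fiddly recursive choice of the $n_k$ and, above all, to making rigorous that a scan of window $k$ re-crosses every column a scan of window $k-1$ crossed; I expect the latter to be where the real care is needed.
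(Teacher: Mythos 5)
Your reduction, the discipline of net-zero-latitude blocks, the progress lemma for $R_w$, and the analysis of the eastward reaches $\beta_k$ are all sound. The gap is the single word ``symmetrically'' in the claim $\alpha_k\to a$: the two halves of $\Phi_k=(R_k)^{n_k}(L_k)^{n_k}$ are not symmetric, because one block $R_k$ contains $3k+1$ instructions $E$ and can therefore cross up to $3k+1$ columns, while one returning block $L_k$ is only guaranteed to cross one. Your recursion makes $n_k$ large enough for the eastward half of $\Phi_k$ to undo the westward half of $\Phi_{k-1}$ (which has only $n_{k-1}$ blocks), but the westward half of $\Phi_k$ has only $n_k$ blocks and may need to undo up to $(3k+1)n_k$ eastward crossings made by its own first half. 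This is not merely a hole in the write-up; the algorithm as defined fails. Take $y_o=0$ and, in the region that phase $k$ explores, tile the columns with ``staircase'' segments: $S_c=\{0\}$, $S_{c+1}=\{1\},\dots,S_{c+k}=\{k\}$, $S_{c+k+1}=\{k-1\},\dots,S_{c+2k}=\{0\}$, $S_{c+2k+1}=\{-1\},\dots,S_{c+3k}=\{-k\}$, matching the probe order of $R_k$. Then every probe of every $R_k$ block succeeds, so each eastward block crosses $3k+1$ columns, while one checks that the $L_k$ blocks cross only about three columns each on average (most of them locate their unique join only at the very last probes of the block). Hence $\alpha_{k+1}-\alpha_k$ is positive and of order $k\,n_k$ for every $k$: the robot drifts east forever and never revisits any column west of its start, so a destination at $(x_o-1,y)$ whose only join $S_{x_o-1}$ sits at a huge latitude is never reached. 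Such a maze lies in $\mathcal{C}$ and can be constructed by simulating your deterministic algorithm and laying the joins down greedily.

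The flaw is local and repairable: the returning half of each phase must contain at least as many blocks as the total number of $E$'s in the outgoing half (an upper bound on its overshoot), e.g.\ $\Phi_k=(R_k)^{n_k}(L_k)^{m_k}$ with $m_k\ge n_k\lvert R_k\rvert$ and then $n_{k+1}\ge m_k\lvert L_k\rvert$, after which your monotonicity argument goes through for the $\alpha_k$ as well. It is worth noting that the paper sidesteps this issue entirely by a different decomposition: it covers $\mathcal{C}$ by countably many classes $C_{n,\mathbf{x}}$ in which the joins between longitudes $0$ and $x$ lie within latitude $n$, and for each class runs a single eastward sweep followed by a single oscillating westward sweep whose block counts are computed explicitly from the length of everything executed before them --- precisely so that each returning sweep dominates the maximal possible overshoot. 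Your uniform ``ever-widening out-and-back'' algorithm, once the exponents are fixed, proves the stronger statement that every accessible vertex is visited without any cover, which is a genuinely different and arguably cleaner route; but as written the western part of the component can escape the robot.
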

\begin{thm}\label{ch2th2}
Let $\mathcal{F} \subseteq \mathcal{M}$ be the set of all mazes for which the board has arbitrarily many horizontal edges removed and nonzero finitely many vertical edges removed in consecutive columns. There exists an algorithm that solves $\mathcal{F}$.
\end{thm}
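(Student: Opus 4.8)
The plan is to treat Theorem~\ref{ch2th1} as a black box for the regime with no vertical damage and to absorb the (localized) vertical damage of a maze in $\mathcal F$ into a countable family of finite ``clearing'' routines, concatenated into one infinite algorithm whose correctness is then checked maze by maze against a countable cover of $\mathcal F$. I would begin by recording the geometry of a maze $(M,\mathbf{o},\mathbf{d})\in\mathcal F$: since only finitely many vertical edges are removed and they lie in consecutive columns, there are integers $a\le b$ and $H\ge 0$ such that every column $c_i$ with $i<a$ or $i>b$ is a complete bi-infinite path and, in every column, all vertical edges of latitude $>H$ or $<-H$ are present — all vertical damage sits inside the finite rectangle $\{a,\dots,b\}\times\{-H,\dots,H\}$. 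Two features of this picture drive the argument. First, whenever the robot stays strictly above latitude $H$ (or strictly below $-H$) every $N$ and every $S$ succeeds, so there it has perfect control of its latitude, and it can only ever be \emph{trapped} inside one of the finitely many bounded intervals into which some column $c_i$, $a\le i\le b$, is cut. Second, the connected component of $\mathbf{o}$ is a (possibly one-sided, possibly infinite) band of complete columns to the left of $c_a$, glued through the rectangle to a band of complete columns to the right of $c_b$, with a bounded-width ``core'' inside the columns $c_a,\dots,c_b$; in particular, restricted to the columns outside $\{a,\dots,b\}$ the maze behaves exactly like one of the kind treated in Theorem~\ref{ch2th1}.

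Next I would isolate from the proof of Theorem~\ref{ch2th1} the generic finite building blocks it produces — subalgorithms that, in a region with only horizontal damage, sweep every vertex of a prescribed rectangle and push the robot as far east and as far west as the horizontal edges permit — and add two primitives adapted to the rectangle of vertical damage. The first is an \emph{untrap} routine $U_{w,h}$: from any vertex of a bounded interval of height at most $h$ inside a broken column lying in a band of at most $w$ consecutive columns, it tries, at each of the $\le w$ columns and for each of the $\le h$ latitudes of the interval, to step east and to step west; hence whenever the interval has an exit, $U_{w,h}$ is guaranteed to move the robot out of it. The second is a \emph{clear-the-band} routine $X_{w,h}$ which interleaves $U_{w,h}$ with short horizontal and vertical moves so as to visit every vertex of the core and to deliver the robot into the safe region (above the damage) of one of the complete-column bands flanking the rectangle — and into the other flanking band as well, whenever the two are connected.

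Then I would set $A=B_1B_2B_3\cdots$, where round $B_n$ is built from the primitives above with \emph{all} parameters set to $n$ (band width $n$, damage height $n$, number of removed edges $n$, and sweep radius $n$): $B_n$ first runs $X_{n,n}$ to escape any trap and clear the damaged columns, then runs the radius-$n$ sweep of Theorem~\ref{ch2th1} on each flanking band. Given a fixed maze $(M,\mathbf{o},\mathbf{d})\in\mathcal F$, let $n_0$ be an integer bounding its band width, its damage height, its number of removed vertical edges, and the distance from $\mathbf{o}$ to $\mathbf{d}$, and put $\mathcal F_n=\{(M,\mathbf{o},\mathbf{d})\in\mathcal F:\ n_0\le n\}$, so that $\mathcal F=\bigcup_n\mathcal F_n$. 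I would then prove that for every $n\ge n_0$ the round $B_n$ brings the robot to $\mathbf{d}$ no matter where the previous rounds left it: once $n\ge n_0$ the routine $U_{n,n}$ cannot be defeated, so $X_{n,n}$ genuinely resets the robot into safe complete-column territory, from which the radius-$n$ sweep behaves exactly as guaranteed by Theorem~\ref{ch2th1}; organising this verification over the classes $\mathcal F_n$ is the role of the countable cover.

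The hardest point is the behaviour inside the damaged columns: there the robot knows neither its position nor whether any single move has succeeded, so $U_{w,h}$ and $X_{w,h}$ must be honest universal exploration sequences for a bounded but a priori unknown sub-board, and they must be designed so that running them with parameters that are \emph{too small} — which happens in all but finitely many rounds — never strands the robot in a place from which the later, correctly sized rounds can no longer recover. Arranging this ``restartability'' of the rounds (typically by opening each round with vertical moves long enough to dominate the bounded displacement produced by all earlier rounds, which is legitimate precisely because the upward and downward rays of every column are intact) is the technical crux, and is exactly what the delicate definitions behind the countable cover are there to support.
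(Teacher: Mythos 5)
Your global architecture (countable cover, one finite ``round'' per class, each round robust to an arbitrary finite prefix, concatenate) is the same as the paper's, and your opening observations are sound: the vertical damage does sit in a finite rectangle, the flanking bands do behave like the toy model of Theorem~\ref{ch2th1}, and a class of the cover can indeed be taken to fix the local configuration. But the proof has a genuine gap at its centre: the routines $U_{w,h}$ and $X_{w,h}$ are asserted, not constructed, and their existence is essentially the whole content of the theorem. The statement ``it tries, at each of the $\le w$ columns and for each of the $\le h$ latitudes, to step east and to step west; hence whenever the interval has an exit it is guaranteed to move the robot out'' does not survive scrutiny for a blind robot: an attempted $E$ either succeeds (placing the robot in a different column, possibly an infinite one whose horizontal damage outside the rectangle is arbitrary and unknown, after which the remaining instructions of $U_{w,h}$ act unpredictably) or fails (leaving it in place), and the robot receives no feedback as to which occurred. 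The sequence must therefore be correct simultaneously along every branch of this exponentially branching tree of possibilities, and merely enumerating columns and latitudes does not achieve that. Likewise, a long block of $N$'s does not restore latitude: inside a finite column it pins the robot to the top of that column, whose absolute latitude is unknown, and your justification (``the upward and downward rays of every column are intact'') is exactly false for the finite columns created by the removed vertical edges, which is where the robot gets trapped. The same objection applies to ``$X_{w,h}$ visits every vertex of the core'': since the destination typically lies in the core, this clause quietly assumes the theorem.

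What the paper actually does in place of $U$ and $X$ is the hard part you have skipped. It never tries to explore the damaged rectangle from an unknown position; instead it engineers the moment and the place at which the robot \emph{first} crosses from a clean strip into the damaged one. This is the role of Lemma~\ref{l2}: the algorithm $SME(a,e,K)$ guarantees that the first passage from column $c_l$ to $c_{l+1}$ happens through the horizontal edge at the lowermost latitude in a prescribed well-order, and that the prepared finite routine $K$ is executed immediately afterwards, so at that one instant the robot's position is known and a precomputed path (built by inspecting the fixed configuration of the class) carries it to the destination. Making this work requires first normalising the robot's state (Part I puts it in the east strip, Part II returns it to the west strip at latitude $0$), and then a long case analysis (Part III, cases 1--4 with many subcases) exploiting specific structural features -- pipes, cutoffs, paired missing edges, ascending chains -- to guarantee that the crossing point is one from which a known path exists. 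None of this is replaceable by a generic ``universal exploration'' of a bounded window, because the window's surroundings vary over an uncountable family and the robot's entry state into the window is not controlled by parameter counting alone. To repair your proof you would need to supply, for each fixed local configuration, an explicit construction with the first-crossing property of Lemma~\ref{l2}; at that point you would be reproving the paper's argument.
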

We should note that, as one might expect, the proof of Theorem~\ref{ch2th2} turns out to be much more difficult than the proof of Theorem~\ref{ch2th1} and that both proofs are constructive. In Section~\S\ref{BC}, in which we give the proof of Theorem~\ref{ch2th1}, we also introduce some generic algorithms which constitute the main building blocks of the algorithm which solves $\mathcal{F}$. In Lemma~\ref{l2}, which is a technical key result of the paper, we present their properties that we use multiple times in the proof of Theorem~\ref{ch2th2}.

Finally, we use the following result as a binder between Theorem~\ref{ch2th1} and Theorem~\ref{ch2th2} in order to obtain Theorem~\ref{mthm}. This result ascertains the intuitive fact that under certain technical conditions, if two sets of mazes are solvable so is there union and moreover if a set of mazes is solvable, then it is also solvable infinitely often.

\begin{prop}\label{ch2prop1}
Let $e(\mathbb{Z}^2)$ be the set of edges of $\mathbb{Z}^2$. We can regard any board of a maze as an indicator function $f: e(\mathbb{Z}^2) \longrightarrow \{ 0,1 \}$. Hence, the set of boards of mazes equipped with the product topology is a compact metrizable space. Let $\mathcal{A}_1, \mathcal{A}_2 \subseteq \mathcal{M}$ be two sets of mazes with the following properties:
\begin{enumerate}
\item  for all $i\in \{1,2\}$, all origins $o\in \mathbb{Z}^2$, all destination $d\in \mathbb{Z}^2$ and all paths $P$ between $o$ and $d$, the sets of boards $B_i = \{ M \mid (M, o, d) \in \mathcal{A}_i, P\le M \}$ that contain the path $P$ are compact;
\item for all  $i\in \{1,2\}$ if $(M,o,d) \in \mathcal{A}_i$, then $(M,o',d') \in \mathcal{A}_i$ for all $o',d'$ in the same connected component as $o,d$; 
\item there exist algorithms $A_1$ and $A_2$ that solve the sets $\mathcal{A}_1$ and $\mathcal{A}_2$, respectively.
\end{enumerate}

Then there exists an algorithm $A$ that solves the set $\mathcal{A}=\mathcal{A}_1\cup \mathcal{A}_2$ and that furthermore guides the robot to visit the destination of any maze in the set infinitely often. Moreover, if we cut or add an initial segment to $A$, the algorithm obtained in this way has the same property.
\end{prop}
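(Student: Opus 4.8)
The plan is to prove the proposition by combining a compactness argument with a diagonalisation over all finite ``local'' descriptions of a maze. The engine is the following finiteness phenomenon: whether the robot, starting at a fixed vertex and executing the first $n$ instructions of a fixed algorithm, has visited a fixed vertex $d$ depends only on the finitely many edges of $\mathbb{Z}^2$ incident to vertices within graph-distance $n$ of the start, so this event is clopen in the space of boards $\{0,1\}^{e(\mathbb{Z}^2)}$. Using this I would first prove a finiteness reduction: for every $i\in\{1,2\}$, every pair of vertices $o,d$, and every finite path $P$ from $o$ to $d$, there is a \emph{single finite} algorithm $G(i,o,d,P)$ that drives the robot from $o$ to $d$ in every board of $K:=\{M\mid (M,o,d)\in\mathcal A_i,\ P\le M\}$. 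Indeed $K$ is compact by hypothesis~(1); the sets $U_n\subseteq K$ of boards on which the robot, started at $o$ and following the first $n$ instructions of $A_i$, visits $d$ are clopen, increasing in $n$, and cover $K$ because $A_i$ solves every maze $(M,o,d)$ with $M\in K$ by hypothesis~(3); hence $K=U_N$ for some $N$, and one takes $G(i,o,d,P)$ to be the length-$N$ prefix of $A_i$.

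Next I would build the master algorithm. Enumerate, with each entry occurring infinitely often, all tuples $\tau=(i,o,d,P)$ as above, say $\tau_j=(i_j,o_j,d_j,P_j)$, and define $A=G_1G_2G_3\cdots$ recursively. Having fixed $Q_{j-1}:=G_1\cdots G_{j-1}$, let $G_j$ be the finite algorithm obtained by running the reduction above \emph{not} based at $o_j$ but based at the board-dependent vertex $v_j(M)$ occupied by the robot after it follows $Q_{j-1}$ from $o_j$ in $M$; this is legitimate because $v_j(M)$ lies in the connected component of $o_j$, hence of $d_j$, so $(M,v_j(M),d_j)\in\mathcal A_{i_j}$ by hypothesis~(2) and $A_{i_j}$ solves it, while $v_j(M)$ is itself a clopen function of $M$, so the relevant ``visits $d_j$ within $n$ steps'' sets stay clopen and compactness still applies. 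By construction, for every $M\in K_j:=\{M\mid (M,o_j,d_j)\in\mathcal A_{i_j},\ P_j\le M\}$, running $Q_{j-1}G_j$ from $o_j$ in $M$ makes the robot visit $d_j$ at some time inside the block $G_j$.

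To verify the conclusions, take $(M,o,d)\in\mathcal A=\mathcal A_1\cup\mathcal A_2$, say $(M,o,d)\in\mathcal A_i$, and fix a finite path $P\subseteq M$ from $o$ to $d$. The tuple $(i,o,d,P)$ occurs at infinitely many indices $j$, and for each of them $M\in K_j$, so the robot visits $d$ during the corresponding block $G_j$; since these blocks occupy infinitely many disjoint time intervals, the robot visits $d$ infinitely often, and in particular $A$ solves $\mathcal A$. For the robustness, prepending a finite algorithm to $A$ just amounts to running $A$ in $M$ from a new origin that still lies in the component of $o$, hence from a maze that is again in $\mathcal A_i$ by hypothesis~(2), so the previous sentence applies unchanged; and deleting an initial segment removes only finitely many of the infinitely many good blocks, and finitely many of the infinitely many visits to $d$. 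Iterating, the property survives any finite combination of such operations.

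The main obstacle, as I see it, is the passage from an infinite solver to a finite one \emph{uniformly in the board}, which is exactly where hypotheses~(1) and~(2) are consumed: compactness to collapse the increasing clopen cover, and the closure property to move the base point to the robot's current, board-dependent, position without leaving $\mathcal A_i$. The delicate bookkeeping is to check that every event entering the argument — the robot's position after a fixed finite prefix, and ``the robot has visited $d$ by time $n$'' — is determined by only finitely many edges, so that all the sets in sight are genuinely clopen. (If ``cut an initial segment'' is intended in the stronger sense of re-starting the truncated algorithm \emph{from the origin}, one must further strengthen each $G_j$ so that it steers the robot to $d$ from any vertex of a prescribed finite ball whose connectivity to $d$ is pinned down by a fixed path — necessary since ``is joined to $d$'' is open but not closed — and then diagonalise over these enriched data too, matching the ball radius against the cumulative length of the earlier blocks; this is the one point where the construction becomes genuinely fiddly.)
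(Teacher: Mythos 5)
Your core mechanism is the same as the paper's: hypothesis (1) plus the fact that ``the robot has visited $d$ within the first $n$ instructions'' is a clopen, increasing event collapses each infinite solver $A_i$ to a finite prefix that works uniformly on $\{M \mid (M,o,d)\in\mathcal{A}_i,\ P\le M\}$ (the paper isolates exactly this as an Observation, proved by sequential compactness rather than by an open cover), and hypothesis (2) is consumed, as in the paper, to re-base at the robot's current vertex before concatenating the next block. Your verification of the main claim and of the ``add an initial segment'' clause is correct.

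The gap is in the ``cut an initial segment'' clause. Your block $G_j$ is calibrated to the exact board-dependent vertex $v_j(M)$ reached by following precisely $Q_{j-1}$ from $o_j$; once an initial segment is deleted, the robot arrives at the start of \emph{every} remaining block at a different vertex, so the guarantee of every block --- not just finitely many --- is voided. The sentence ``deleting an initial segment removes only finitely many of the infinitely many good blocks'' is therefore not a valid argument: truncation desynchronises the entire tail, it does not merely discard a prefix of good events. The repair you sketch in your final parenthesis is not an optional strengthening for a ``stronger sense'' of cutting --- it is the only sense, since solvability of the truncated algorithm is evaluated by running it from the origin --- and it is essentially what the paper does: its $i$-th block is a prefix of $A_1$ or $A_2$ chosen so that, started from \emph{any} vertex within distance $k$ of the origin, the robot visits every accessible vertex within distance $j$ of its starting point, with $j=k=2|B_1\cdots B_{i-1}|+1$, so that the cumulative-length bound eventually dominates any fixed finite perturbation of the prefix. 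You need to carry that enrichment out (finitely many candidate starting vertices and finitely many candidate paths of length at most $j$ per block, so your compactness step still returns a single finite prefix) before the ``cut'' half of the final assertion is actually proved.
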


\section{The Toy Model}\label{BC}
The aim of this section is to prove Theorem~\ref{ch2th1} and to introduce the general strategy and some generic algorithms that are used in the proof of Theorem~\ref{ch2th2} as well.

For a subset of mazes $\mathcal{C} \subseteq \mathcal{M}$, in order to construct an algorithm $A$ that solves $\mathcal{C}$ we adopt the following natural strategy: we find a countable cover $\mathcal{C}=\cup_{i=1}^{\infty}\mathcal{C}_i$ such that for each $i \in \mathbb{N}$ and each finite algorithm $X$ we are able to find a finite algorithm $A_X^i$ such that the concatenated algorithm $XA_X^i$ solves $\mathcal{C}_i$. Then we are able to find an algorithm $A$ that solves $\mathcal{C}$. Indeed, we construct recursively the finite algorithms $(B_n)_{n\ge0}$ with $B_0= \emptyset$ and $B_{n}=A_{B_0B_1 \hdots B_{n-1}}^{n}$, then we take $A=B_1B_2 \hdots$.

In the toy model, let $\mathcal{C}$ be the set of mazes with no vertical edges removed. Without loss of generality, we assume that for any maze in $\mathcal{C}$ the origin is the point $(0, 0)$. The main property of this set of mazes is that at each step of the algorithm we know the robot's latitude.

\begin{proof}[Proof of Theorem~\ref{ch2th1}]
We begin the proof by defining two classes of algorithms. The aim of the first one is to move the robot eastwards in a certain organised pattern and we call it $move \_ east$; it is defined as follows for all $a, e \geq 1$: \\
$ME(a, e) := (((((E)^e NES)^e SEN)^e N^2ES^2)^e \hdots S^a E N^a)^e.$

We view $ME(a, e)$ as being composed from the multiple concatenation of $2a+1$ different building blocks which we call \emph{locomotory moves}: $E,$ $NES,$ $SEN,$ $N^2ES^2,$ $\hdots$ $N^aES^a$, $S^a E N^a.$ We constructed the class of algorithms $move \_ east$ in such a way so that the following holds:

Let $a, e$ be two natural numbers. Assume that the robot starts at the point $\textbf{x} = (x, y)$ in any maze $M \in \mathcal{C}$ with no vertical edges removed. Take the maximal $k \leq e$ such that in $M$ the columns $c_i$ and $c_{i+1}$ are joined at some latitude in $\{-a+y, \hdots, a+y \}$ for all $x \le i \le x+k-1$. Then, as the robot follows the algorithm $ME(a, e)$, it oscillates about the row $r_y$ at latitudes between $y-a$ and $y+a$. After the algorithm is followed, the robot gets to a point $\textbf{x'} = (x', y)$ with $x'\ge x+k$, in particular $x' = x+k$ if $k<e$. Moreover if we well order $\mathbb{Z}$ by $y<1+y<-1+y<2+y<-2+y< \hdots$, then for all $x \le i \le x+k-1$ the robot passes from the column $c_i$ to the column $c_{i+1}$ through the edge at the lowermost latitude with respect to this order.

This holds as a particular case of Lemma~\ref{l2}, which is a technical result used extensively, proved later in this section. One can also see how this statement follows from the construction of $ME(a, e)$, more specifically from the order in which the locomotory moves appear in the algorithm. The counterpart of $move \_ east$ is called $move \_ west$, and we have: \\
$MW(a, e) := (((((W)^e NWS)^e SWN)^e N^2WS^2)^e \hdots S^a W N^a)^e.$

The second class of algorithms that we define is called $oscillating \_ move \_ east$, which is a slight alteration of $move \_ east$ formed by inserting the \emph{oscillatory} algorithm $(N^bS^{2b}N^b)^e$ in between some locomotory moves; it is defined as follows for all $a, e \geq 1$ and $b \in \mathbb{Z}$: \\
$OME(a, e, b) := ((((((N^b S^{2b} N^b)^e E)^e NES)^e SEN)^e N^2ES^2)^e \hdots S^a E N^a)^e.$

We note that in every maze with no vertical edge removed, after the robot follows the oscillatory algorithm $(N^bS^{2b}N^b)^e$, it gets back to the starting point. Therefore, for any parameters $a, e, b$, as the robot follows $OME(a, e, b)$ in any maze $M \in \mathcal{C}$, it has the same dynamics as it follows $ME(a, e)$ in $M$ and in addition the robot visits some consecutive columns, beginning with the one which contains its starting point $\textbf{x} = (x, y)$, at all latitudes between $y-b$ and $y+b$. Finally, we use the oscillatory algorithm $(N^bS^{2b}N^b)^e$ instead of $N^bS^{2b}N^b$ which works just as well for this purpose, only because we want $OME(a, e, b)$ to be a particular case of a much more general algorithm, $SME(a, e, K)$ that is defined later in this section.

The counterpart of $oscillating \_ move \_ east$ is called $oscillating \_ move \_ west$, and we have:\\
$OMW(a, e, b) = ((((((N^b S^{2b} N^b)^e W)^e NWS)^e SWN)^e N^2WS^2)^e \hdots S^a W N^a)^e.$

We are now ready to prove the theorem using the general strategy described at the beginning of the section. In order to produce the desired countable cover, define $C_{n, \textbf{x}}$ to be the set of all mazes with no vertical edges removed, with the destination point $\textbf{x} = (x, y)$ and such that any two consecutive columns at longitude between $0$ and $x$ are joined at some latitude between $-n$ and $n$. Then $C=\cup_{n,\textbf{x}}C_{n,\textbf{x}}$ is a countable cover.

We let $X$ be any finite algorithm and we fix the values $n, \textbf{x}$. We now consider just the set of mazes $C_{n,\textbf{x}}$ and we aim to construct an algorithm $A$ such that $XA$ solves $C_{n,\textbf{x}}$, which by the discussion of our strategy at the beginning of the section is enough to conclude. 

Say that the robot starts in any maze $M \in C_{n,\textbf{x}}$ (as always, it starts in the origin) and it gets to the point $(a, 0)$ after it follows some finite algorithm $Y$. Define $a:=\max \{|Y|_N, |Y|_S\}$; $e:=|Y|_W$. The following observation is crucial: for each pair $\{i,i+1\}\subset \{0, \hdots, a\}$, the columns $c_i$ and $c_{i+1}$ are joined at some latitude in $\{-|Y|_S, \hdots, |Y|_N \} \subseteq \{ -|Y|, \hdots, |Y| \}$. Therefore, after the robot follows the algorithm $Y$ $ME(a,e)$ in $M$ it gets to some point $(a', 0)$ with $a' \ge 0$.

Now we build $A$ as a concatenation of three algorithms $A:=A_1A_2A_3$.

We construct $A_1:= S^{|X|_N-|X|_S}$; then after the robot follows the algorithm $XA_1$ in any maze $M \in C_{n,\textbf{x}}$ it gets to $r_0$.

Define $a:= \max \{|XA_1|_S, |XA_1|_N, n\}$; $e:=|XA_1|_W+|x|$. Define $A_2:= ME(a,e)$. Then after the robot follows the algorithm $X A_1 A_2$ in any maze $M \in C_{n,\textbf{x}}$ it gets to some point $(x^+,0)$ with $x^+\ge x$.

Define $a:= \max \{|XA_1A_2|_S, |XA_1A_2|_N, n\}$; $w:=|XA_1A_2|_E+ |x|+1$; $b:=|y|$. Define $A_3:= OMW(a,w,b)$. Then after the robot follows the algorithm $XA_1A_2A_3$ in any maze $M \in C_{n,\textbf{x}}$, it gets to some point $(x^-,0)$ with $x^-\le x$ and it visits every intermediate column $c_i$ with $x^-\le i\le x^+$ including $c_x$ at every latitude in $\{-b, \hdots, b \}$ including $y$. 

Therefore, after the robot follows $XA =XA_1A_2A_3$ in any maze $M \in C_{n,\textbf{x}}$, it visits the destination point $\textbf{x}$. Hence there exists an algorithm $A$ such that $XA$ that solves $C_{n,xy}$. This finishes the proof.
\end{proof}

We note that the missing vertical edges in the general model usually make the latitude of the robot unknown but it turns out that we can actually make use of the missing edges to regain the latitude of the robot. However, the unknown longitude and the missing edges require the robot to use a very subtle path to get to the destination point. As a result of these difficulties in the proof of Theorem~\ref{ch2th2} we need to make a much finer covering than in the proof of Theorem~\ref{ch2th1}.

In the remainder of this section we introduce an algorithm which is a generalisation of $ME(a, e)$ and $OME(a, e, b)$ called $special \_ move \_ east$ which is the main building block of the algorithms used in the general model. We then group all its properties in Lemma~\ref{l2}, which makes it one of the main results of the paper. For $a, e \geq 1$ and a finite algorithm $K$ we define:\\
$SME (a,e,K):=(((((K^eE)^eNES)^eSEN)^eN^2ES^2)^e...S^aEN^a)^e.$ We view $SME(a, e, K)$ as being composed from the multiple concatenation of $2a+2$ different building blocks: the $2a+1$ locomotory moves $E,$ $NES,$ $SEN,$ $\hdots$ $S^aEN^a$ and the special algorithm $K.$

Its counterpart, $special \_ move \_ west$ is defined as:\\
$SMW (a,e,K):=(((((K^eW)^eNWS)^eSWN)^eN^2WS^2)^e...S^aWN^a)^e.$

Recall that $\mathcal{C} \subset \mathcal{M}$ is the set of mazes with no vertical edges removed. The following result encompasses the main properties of $SME (a,e,K)$ that are used countless times in the proof of Theorem~\ref{ch2th2}.

\begin{lemma}\label{l2}
Let $a, e \geq 1$ and $K$ be a finite algorithm such that for any maze $M \in \mathcal{C}$, if the robot follows $K$ in $M$ starting from the origin, it returns on the $x$-axis and it has a non-negative longitude. Let $0\le l \le e-2$ have the following properties: 

(1) for any $0\le x \le l$ the columns $c_x$ and $c_{x+1}$ are joined at some latitude between $-a$ and $a$; 

(2) for any $\textbf{v}=(x_v,0)$ with $0 \le x_v \le l$, if the robot starts from $\textbf{v}$ and follows $K$ in any maze in $\mathcal{C}$ it gets at some point $\textbf{w} = (x_w,0)$ with $x_v \le x_w \le l$ without visiting any vertex on the column $c_{l+1}$, i.e. without visiting any point of longitude at least $l+1$.

Then, after the robot follows $SME(a,e,K)$ in any maze in $\mathcal{C}$, it gets to some point $\textbf{v}=(x_v,0)$ on the $x$-axis with $x_v \geq l+1$. Moreover, it does not pass from the column $c_l$ to the column $c_{l+1}$ for the first time while executing $K$; it passes from the column $c_l$ to the column $c_{l+1}$ for the first time while executing a locomotory move $N^mES^m$, where $m \in \Z$ is the lowermost latitude with respect to the standard well order on $\Z: 0<1<-1<2<-2< \hdots$ such that the columns $c_l$ and $c_{l+1}$ are joined at latitude $m$; finally, immediately after this locomotory move $N^mES^m$ is executed, the robot follows $K$.
\end{lemma}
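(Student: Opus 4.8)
The plan is to unwind the nested structure of $SME(a,e,K)$ and track the robot column by column, exploiting that in any maze $M \in \mathcal{C}$ all vertical edges are present, so the robot's latitude is always known exactly after each symbol. First I would set up notation: write $SME(a,e,K) = \bigl(L_{2a+1}\bigr)^e$ where $L_{2a+1}$ ends in $S^aEN^a$ and, peeling off the outermost power, $L_{2a+1} = (L_{2a})^e S^a E N^a$, and so on down to the innermost block $L_1 = (K^e E)^e$. I would first record the key ``return lemma'' for $K$: hypothesis (2) says that starting anywhere on $r_0$ with longitude in $\{0,\dots,l\}$, following $K$ keeps the robot on $r_0$, weakly increases its longitude, and never reaches column $c_{l+1}$. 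Iterating, $K^e$ (indeed any power of $K$) does the same. Thus the $K^e$ factors can never be what first pushes the robot from $c_l$ onto $c_{l+1}$.

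The core of the argument is an induction on the columns $c_0, c_1, \dots, c_l$ showing that the robot crosses each of the first $l+1$ column-gaps and that it does so in the prescribed manner. The crucial observation about each locomotory move $N^mES^m$ (for $0 \le m \le a$, and symmetrically $S^mEN^m$) is: if the robot is on $r_0$ in column $c_x$ with $0 \le x \le l$, then executing $N^mES^m$ moves it to $c_{x+1}$ and back to $r_0$ precisely when $c_x$ and $c_{x+1}$ are joined at latitude $m$ (using that all vertical edges are present, so the $N^m$ and $S^m$ portions always succeed), and otherwise leaves it in place. Now, because the blocks are nested with the low-$|m|$ locomotory moves on the inside and because of the well-order $0<1<-1<2<-2<\cdots$, when the robot first arrives on $r_0$ at column $c_x$ (for $x \le l$), the subsequent portion of $SME$ will try the locomotory moves in order of increasing $|m|$-rank, interleaved with enough repetitions (the exponent $e$, and $l \le e-2$ is the slack we need) that it exhausts all of them before the outer structure runs out; hence it crosses to $c_{x+1}$ via the gap at the $\le$-lowermost joining latitude $m$, and — reading off the block structure — the very next building block executed after that $N^mES^m$ is a copy of $K$. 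Applying hypothesis (2) again, this $K$ keeps the robot on $r_0$ without leaving $c_{x+1}$ prematurely, so the induction continues. Carrying this to $x = l$ gives that the robot crosses the $(l,l+1)$-gap for the first time exactly during a locomotory move $N^mES^m$ with $m$ the $\le$-least joining latitude, immediately followed by $K$, and that afterwards the robot sits on $r_0$ with longitude $\ge l+1$, which is the conclusion.

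I would then argue that the robot ends $SME(a,e,K)$ on the $x$-axis: every locomotory move and every full copy of $K^e$ returns to $r_0$, and the whole algorithm is a concatenation of these, so the final position is on $r_0$; combined with the crossing argument, its longitude is $\ge l+1$. A small separate check handles the innermost block $(K^eE)^e$: starting at the origin, $K^e$ returns to $r_0$ with non-negative longitude (the standing hypothesis on $K$), then $E$ tries to advance, and the first time the robot sits at a column $c_x$ with $x \le l$ and the gap to $c_{x+1}$ happens to be at latitude $0$, it is this bare $E$ (the $m=0$ locomotory move) that performs the crossing — consistent with the statement since $N^0ES^0 = E$.

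The main obstacle I expect is the bookkeeping that guarantees ``enough repetitions'': I must show that between the moment the robot first reaches $r_0 \cap c_x$ and the moment the ambient block structure is exhausted, every one of the $2a+1$ locomotory moves (in the correct nested order, each repeated enough times to absorb the at-most-$l$ prior crossings) is executed at least once, so that no joining latitude in $\{-a,\dots,a\}$ is missed and the first crossing is via the $\le$-least one. This is exactly where the hypotheses $l \le e-2$ and ``$c_x,c_{x+1}$ joined at some latitude between $-a$ and $a$'' are used, and making the counting match the $e$-fold nesting precisely is the delicate part; everything else is a routine consequence of all vertical edges being present together with hypothesis (2) on $K$.
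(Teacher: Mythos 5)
Your setup is right — the return properties of $K$, the effect of a locomotory move $N^mES^m$ on a robot sitting at $(x,0)$, and the reason the robot must eventually pass $c_{l}$ — and these parts match the paper. But the heart of the lemma is the third and fourth conclusions (the first $c_l\to c_{l+1}$ crossing happens at the \emph{lowermost} joining latitude, and is \emph{immediately} followed by $K$), and there your argument has a genuine gap. You justify these by saying that once the robot first reaches $r_0\cap c_x$, ``the subsequent portion of $SME$ will try the locomotory moves in order of increasing rank.'' That is not true: the robot typically arrives at a new column in the \emph{middle} of the nested block structure, so the very next locomotory move it meets can be one of high rank, and nothing in the syntax prevents it from crossing there if $c_l,c_{l+1}$ happen to be joined at that latitude. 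Likewise, the building block that follows a given instance of $N^mES^m$ is $K$ only when that instance is not the $e$-th (last) iteration of its level-$m$ block; otherwise it is the next-ranked locomotory move. So ``reading off the block structure'' does not give the fourth conclusion either. Even the repaired goal you set yourself — that every locomotory move gets executed at least once after arrival at $c_x$ — would not rule out an early crossing at a non-minimal latitude.

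The missing idea is a counting argument, which the paper runs as a proof by contradiction. Suppose the first $c_l\to c_{l+1}$ crossing occurs during an instance of $N^bES^b$ with $b$ not lowermost, and let $b'$ be the predecessor of $b$ in the well-order. By the nesting, the segment immediately preceding \emph{any} instance of $N^bES^b$ is $A^e$, where $A$ is the full sub-block ending in $N^{b'}ES^{b'}$; this $A$ contains $N^{m}ES^{m}$. Since the maze and algorithm are deterministic, if one instance of $A$ returns the robot to the same vertex, so do all later ones; but a hypothetical $(e+1)$-st instance of $A$, starting from $(l,0)$, would advance at least one column (it contains $N^mES^m$). Hence every one of the $e$ actual instances advances at least one column, so $A^e$ advances at least $e\ge l+2$ columns yet ends at longitude $l$ — a contradiction. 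The same ``all-or-nothing'' count, applied to the $e$ iterations of the level-$m$ block, shows the crossing instance of $N^mES^m$ cannot be the last iteration of its block, whence $K$ follows it immediately. This is exactly the place where $l\le e-2$ is used, and it is the step your proposal defers; without it the proof is incomplete.
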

\begin{proof}
Let $M \in \mathcal{C}$ be any maze with no vertical edge removed. We prove the result for $M$, so for brevity, we make the convention that every time we say that the robot follows an algorithm, it follows that algorithm in $M$.

By the hypothesis on $K$, if the robot is on the $x$-axis and follows $K$ (or $N^b E S^b$, $b \in \{ -a, \hdots a \}$), it returns to the $x$-axis and its longitude does not strictly decrease. We fix $x$ between $0$ and $l$, so that the columns $c_x$ and $c_{x+1}$ are joined at some latitude $b$ between $-a$ and $a$. Hence, if the robot starts from the point $(x, 0)$ and follows $N^bES^b$ it gets to the point $(x+1, 0)$. Therefore, if the robot is on the $x$-axis at some longitude between $0$ and $l$, then after each instance of the algorithm $(((((K^eE)^eNES)^eSEN)^eNNESS)^e...S^aEN^a)^1$, the longitude of the robot increases by at least one. This proves the first statement of the conclusion, that if the robot follows $SME(a,e,K)$, it gets to some point $\textbf{v}=(x_v,0)$ on the $x$-axis with $x_v \geq l+1$.

The second statement of the conclusion is that if the robot follows $SME(a,e,K)$, it does not pass from the column $c_l$ to the column $c_{l+1}$ for the first time while executing $K$. This follows directly from the hypothesis: indeed, for any $\textbf{v}=(x_v,0)$ with $0 \le x_v \le l$, if the robot starts from $\textbf{v}$ and follows $K$, it gets at some point $\textbf{w} = (x_w,0)$ with $x_v \le x_w \le l$ without visiting any vertex on the column $c_{l+1}$, i.e. without visiting any point of longitude at least $l+1$.

From the first two statements of the conclusion proved above it follows that the robot passes for the first time from the column $c_l$ to the column $c_{l+1}$ while executing some instance of the move of the form $N^bES^b$, $-a \leq b \leq a$. Assume for a contradiction that $b \neq 0$ is not the lowermost latitude with respect to the well order on $\Z$ at which the columns $c_l$ and $c_{l+1}$ are joined. Let $b' \in \Z$ be the predecessor of $b$ in the well order on $\Z$. Say $Y$ is the first segment of the algorithm $SME(a,e,K)$ strictly before this specific instance of this specific locomotory move, $N^bES^b$.  

We define $A=(((((K^eE)^eNES)^eSEN)^eNNESS)^e...N^{b'}ES^{b'})^1$ and note that $A' = A^e =  (((((K^eE)^eNES)^eSEN)^eNNESS)^e...N^{b'}ES^{b'})^e$ is a last segment of $Y$. Let $B$ be the first segment of $Y$ strictly before $A'$, i.e. $Y = BA'$. For some $0 \leq x \leq l$ we denote by $(x, 0)$ the vertex where the robot gets if it starts from the origin and follows $B$. If the robot starts from $(x, 0)$ and follows $A'$, it gets to the point $(l, 0)$. Also notice that if the robot starts from $(l, 0)$ and follows $A$, it gets to some point $(l', 0)$ with $l' \geq l+1$. Say the robot starts from the point $(x, 0)$ and follows the algorithm $A^{e+1}$. If the robot starts from the $x$- axis and follows $A$ it advances eastwards at least $0$ columns. When the robot starts from the $x$- axis and follows the $e+1$-th instance of $A$, it returns to the $x$-axis and advances eastwards at least one column. This means that if the robot starts from the $x$-axis and follows the $w$-th instance of $A$ it returns to the $x$-axis and advances eastwards at least one column for each $1 \leq w \leq e+1$. 

Therefore, if the robot starts from $(x, 0)$ and follows $A' = A^e$, it gets to the point $(l, 0)$ and advances eastwards at least $e$ columns. This is a contradiction as $l+1 \leq e$. This proves the third statement of the conclusion, that if the robot follows $SME(a,e,K)$, it passes from the column $c_l$ to the column $c_{l+1}$ for the first time while executing a locomotory move $N^mES^m$, where $m \in \Z$ is the lowermost latitude with respect to the standard well order on $\Z: 0<1<-1<2<-2< \hdots$ such that the columns $c_l$ and $c_{l+1}$ are joined at latitude $m$.

By the third statement of the conclusion, we know that the robot passes for the first time from the column $c_l$ to the column $c_{l+1}$ while executing the move $N^mES^m$. Assume $K$ does not follow immediately that after this move is executed. Say $Y$ is the first segment of the algorithm $SME(a,e,K)$ before and including this specific instance of this specific locomotory move, $N^mES^m$.

We define $A=(((((K^eE)^eNES)^eSEN)^eNNESS)^e...N^{m}ES^{m})^1$ and note that $A' = A^e =  (((((K^eE)^eNES)^eSEN)^eNNESS)^e...N^{m}ES^{m})^e$ is the last segment of $Y$.  Let $B$ be the first segment of $Y$ strictly before $A'$, i.e. $Y = BA'$. For some $0 \leq x \leq l$ we denote by $(x, 0)$ the vertex where the robot gets if it starts from the origin and follows $B$. If the robot starts from $(x, 0)$ and follows $A'$, it gets to the point $(l+1, 0)$. Say the robot starts from the point $(x, 0)$ and it follows the algorithm $A^e$. If the robot starts from the $x$- axis and follows $A$, it advances eastwards at least $0$ columns. When the robot starts from the $x$- axis and follows the $e$-th instance of $A$, it returns to the $x$-axis and advances eastwards at least one column. This means that if the robot starts from the $x$-axis and follows the $w$-th instance of $A$ it returns to the $x$-axis and advances eastwards at least one column for each $1 \leq w \leq e$. 

Therefore, if the robot starts from $(x, 0)$ and it follows $A' = A^e$, it gets to the point $(l, 0)$ and advances eastwards at least $e$ columns. This is a contradiction as $l+2 \leq e$, proving the last statement of the conclusion, that after the robot passes for the first time from $c_l$ to $c_{l+1}$ following the locomotory move $N^mES^m$, the robot follows $K$. This finishes the proof.
\end{proof}
We end this section with the following immediate corollary of Lemma~\ref{l2}.
\begin{cor}\label{c1}
Under the assumptions of Lemma~\ref{l2}, let us choose another order on $\Z$, say the \emph{$n$-special order on $\Z$}: $0<n<1<-1< \hdots$ instead of the well order on $\Z$ we considered in Lemma~\ref{l2}.  Then if we construct 
\[
SME^{(n)} (a, e, K):= (((((((K)^e N^{n} E S^{n})^e E)^e NES)^e SEN)^e NNESS)^e \hdots S^aEN^a)^e,
\]
the results in Lemma~\ref{l2} still hold, with the amendment that after the robot follows $SME^{(n)} (a, e, K)$ in any maze in $\mathcal{C}$, it passes for the first time from the column $c_l$ to the column $c_{l+1}$ while executing $N^mES^m$, where $m$ is the lowermost latitude with respect to the $n$-special order on $\Z$.
\end{cor}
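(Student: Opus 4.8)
The plan is to replay the proof of Lemma~\ref{l2} almost verbatim. The key observation is that that proof uses no arithmetic property of the well order on $\Z$ whatsoever: it uses only that $SME(a,e,K)$ is assembled, in its characteristic $e$-fold nested pattern, from the block $K$ together with the locomotory moves $N^bES^b$, with $K$ occurring first and the $N^bES^b$ occurring in the order in which their latitudes $b$ are listed by the order under consideration, and that each such block, executed from a point of the $x$-axis in a maze of $\mathcal C$, returns the robot to the $x$-axis with non-decreasing longitude. Since $SME^{(n)}(a,e,K)$ is built by exactly the same recipe, now listing the locomotory moves (one of which is $N^nES^n$) in the order dictated by the $n$-special order while keeping $K$ first, each of the four conclusions of Lemma~\ref{l2} transfers. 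Concretely I would carry out, in order: (1) the proof that after $SME^{(n)}(a,e,K)$ the robot reaches the $x$-axis with longitude $\ge l+1$ — identical to before, since the inner algorithm of $SME^{(n)}$ still contains $N^bES^b$ for every $|b|\le a$, so by hypothesis~(1) every instance of it advances the robot by at least one column as long as the robot sits on the $x$-axis at longitude $\le l$; (2) the proof that the first passage from $c_l$ to $c_{l+1}$ does not occur while executing $K$, which is immediate from hypothesis~(2) and does not mention the order at all; (3) the ``advance-by-$e$-columns'' contradiction argument (invoking $l\le e-2$) showing that the first passage occurs while executing $N^mES^m$ with $m$ lowermost in the $n$-special order, with the predecessor of $m$ in the $n$-special order playing exactly the role of the predecessor in the original proof; and (4) the analogous contradiction showing that $K$ is executed immediately after this $N^mES^m$.

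The only genuinely new point, which is where I would be careful, is that the extra locomotory move $N^nES^n$ has a latitude $n$ that need not lie in $[-a,a]$. This is harmless for two reasons. First, in any maze of $\mathcal C$ all vertical edges are present, so $N^nES^n$, executed from a point of the $x$-axis, always returns the robot to the $x$-axis, never decreases its longitude, and advances it by exactly one column precisely when the current pair of consecutive columns is joined at latitude $n$ — in other words it behaves in $\mathcal C$ exactly as the blocks $N^bES^b$ with $|b|\le a$ do, so it slots into the proof of Lemma~\ref{l2} as an honorary locomotory move, and in particular it cannot, on its own, cause the first passage $c_l\to c_{l+1}$ unless $n$ happens to be a joining latitude of $c_l$ and $c_{l+1}$. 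Second, I would check that the latitude $m$ produced in step~(3) really labels a block present in $SME^{(n)}$: by hypothesis~(1), $c_l$ and $c_{l+1}$ are joined at some latitude in $[-a,a]$, so the lowermost joining latitude of $c_l$ and $c_{l+1}$ in the $n$-special order is either $n$ or an element of $[-a,a]$ — in either case one of the locomotory moves appearing in $SME^{(n)}$.

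I do not expect a real obstacle: the content of the corollary is exactly that the argument of Lemma~\ref{l2} is insensitive to which linear order one uses to sequence the locomotory moves, provided $SME^{(n)}$ is constructed to sequence them in that order. The only thing demanding attention is bookkeeping — systematically replacing every occurrence of ``well order'' and ``predecessor of $b$'' in the proof of Lemma~\ref{l2} by ``$n$-special order'' and ``predecessor of $b$ in the $n$-special order'', and treating $N^nES^n$ throughout as just another $N^bES^b$, subject to the caveat above that its latitude may fall outside $[-a,a]$.
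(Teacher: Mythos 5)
Your proposal is correct and matches the paper's intent exactly: the paper offers no proof, presenting the corollary as an immediate consequence of Lemma~\ref{l2}, and your argument is precisely the "rerun the proof with the new order" reasoning that justifies calling it immediate, with the one genuinely new point (the honorary locomotory move $N^nES^n$ whose latitude may fall outside $[-a,a]$, and the check that the lowermost joining latitude in the $n$-special order always labels a block actually present in $SME^{(n)}$) correctly identified and handled.
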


\section{The Cover}\label{fnv}
In the general model, let $\mathcal{F} \subset \mathcal{M}$ be the set of mazes with nonzero finitely many vertical edges removed in consecutive columns. Without loss of generality, we assume that for any maze in $\mathcal{F}$ the origin is the point $(0, 0)$. In this section, we introduce a series of technical definitions that are used to classify the mazes in $\mathcal{F}$ in order to prove Theorem~\ref{ch2th2}.
\begin{figure}[h!]
\centering
\resizebox{0.5\textwidth}{!}{
\begin{tikzpicture}
    [
        dot/.style={circle,draw=black, fill,inner sep=2pt},
        cross/.style={cross out, draw=black, minimum size=2*(#1-\pgflinewidth), inner sep=4pt, outer sep=4pt},
    ]

\foreach \x in {-4,...,4}{
 \foreach \y in {-4,...,4}{
    \node[dot] at (\x,\y){ };
}}

\draw[line width=0.7mm, red] (-4, -4) -- (-4, 4);
\draw[line width=0.7mm, red] (-3, -4) -- (-3, 4);
\draw[line width=0.7mm, red] (2, -4) -- (2, 4);
\draw[line width=0.7mm, red] (3, -4) -- (3, 4);
\draw[line width=0.7mm, red] (4, -4) -- (4, 4);
\draw[line width=0.7mm, red] (-2, 4) -- (-2, 2);
\draw[line width=0.7mm, red] (-2, 0) -- (-2, -1);
\draw[line width=0.7mm, red] (-2, -2) -- (-2, -3);
\draw[line width=0.7mm, red] (-1, 3) -- (-1, 1);
\draw[line width=0.7mm, red] (-1, -1) -- (-1, -2);
\draw[line width=0.7mm, red] (0, -2) -- (0, -4);
\draw[line width=0.7mm, red] (0, 0) -- (0, 3);
\draw[line width=0.7mm, red] (1, 4) -- (1, 3);
\draw[line width=0.7mm, red] (1, 2) -- (1, -4);

\draw[line width=0.7mm, red] (3, 4) -- (4, 4);
\draw[line width=0.7mm, red] (2, 4) -- (-2, 4);
\draw[line width=0.7mm, red] (-2, 3) -- (0, 3);
\draw[line width=0.7mm, red] (1, 3) -- (3, 3);
\draw[line width=0.7mm, red] (-4, 3) -- (-3, 3);
\draw[line width=0.7mm, red] (-4, 2) -- (-3, 2);
\draw[line width=0.7mm, red] (-1, 2) -- (1, 2);
\draw[line width=0.7mm, red] (2, 2) -- (3, 2);
\draw[line width=0.7mm, red] (-1, 1) -- (2, 1);
\draw[line width=0.7mm, red] (-3, 1) -- (-2, 1);
\draw[line width=0.7mm, red] (3, 1) -- (4, 1);
\draw[line width=0.7mm, red] (-2, 0) -- (-1, 0);
\draw[line width=0.7mm, red] (1, 0) -- (3, 0);
\draw[line width=0.7mm, red] (-2, -1) -- (1, -1);
\draw[line width=0.7mm, red] (2, -1) -- (3, -1);
\draw[line width=0.7mm, red] (-4, -1) -- (-3, -1);
\draw[line width=0.7mm, red] (-4, -2) -- (-2, -2);
\draw[line width=0.7mm, red] (2, -2) -- (4, -2);
\draw[line width=0.7mm, red] (-1, -3) -- (2, -3);
\draw[line width=0.7mm, red] (3, -3) -- (4, -3);
\draw[line width=0.7mm, red] (-2, -4) -- (-1, -4);
\draw[line width=0.7mm, red] (0, -4) -- (2, -4);
\draw[line width=0.7mm, red] (-4, -4) -- (-3, -4);

\draw (3,-2) node[cross] {};

\foreach \x in {-4,...,4}
    \draw (\x,.1) -- node[below,yshift=-1mm] {\x} (\x,-.1);
\foreach \y in {-4,...,4}
    \draw (.1,\y) -- node[below,xshift=-3mm, yshift=3mm] {\y} (-.1,\y);
\draw[->,line width=0.15mm] (0,-4.5) -- (0,4.5);
\draw[->,line width=0.15mm] (-4.5,0) -- (4.5,0);
\end{tikzpicture}
}
\caption{A local representation of a general maze $M \in \mathcal{F}$ that we use in order to illustrate our definitions. The destination point $(3, -2)$ is marked with an 'X'. We assume that there are no vertical edges removed from $M$ other than the ones shown in the figure. For simplicity, we further assume that $M$ is connected, though this may not be true for all mazes.} \label{1}
\end{figure}

We recall that in order to construct an algorithm $A$ that solves the set of mazes $\mathcal{F} \subset \mathcal{M}$ we adopt the following strategy: we find a countable cover $\mathcal{F}=\cup_{i=1}^{\infty}F_i$ with $(F_i)_{i\ge1}\subseteq \mathcal{F}$ such that for each $i \in \mathbb{N}$ and each finite algorithm $X$ we are able to find a finite algorithm $A_X^i$ such that the concatenated algorithm $XA_X^i$ solves $F_i$.

%make a picture with the bar process
The aim of this section is to introduce the definitions that we need to use in order to construct the cover $(F_i)_{i \geq 1}$.

For any maze $M \in \mathcal{F}$ we denote by \emph{HE}, \emph{HNE}, \emph{VE}, \emph{VNE} a horizontal edge, horizontal non edge, vertical edge and vertical non edge, respectively. For $M$ as in Figure~\ref{1}, between $(2, 2)$ to $(3, 2)$ there is a HE, between $(-1, -2)$ and $(0, -2)$ there is a HNE, between $(0, 0)$ and $(0, 1)$ there is a VE and between $(1, 2)$ and $(1, 3)$ there is a VNE.

From any maze $M \in \mathcal{F}$ we construct the maze $\overline{M} \in \mathcal{F}$ by adding all the possible VEs such that the connected component of the origin is unchanged in the process. The new maze $\overline{M}$ has the nice property that the robot can get from the origin to one vertex of every VNE in $\overline{M}$. We note that an algorithm solves $M$ if and only if it solves $\overline{M}$. Therefore, in order to prove Theorem~\ref{ch2th2} it is enough to construct an algorithm $A$ which solves $\overline{\mathcal{F}} = \{ \overline {M} \mid M \in \mathcal{F} \} \subseteq \mathcal{F}$.

The rest of the section will only address mazes in $\overline{\mathcal{F}}$, so for any maze $\overline{M} \in \overline{\mathcal{F}}$ we introduce the following definitions. 

Let $S$ be the smallest vertical strip that contains all the VNEs, the origin and the destination point together with all the HEs incident to it on its left and right sides. As there is only a finite number of VNEs, $S$ contains a finite number of (consecutive) columns. For $M$ as in Figure~\ref{1}, $S$ is the subgraph formed from the columns $c_{-2}, \hdots, c_3$ together with all the HEs between $c_{-3}$ and $c_{-2}$ and all the HEs between $c_3$ and $c_4$; in particular the vertex $(-3, -2)$ and the edge between $(3, 1)$ and $(4, 1)$ are in $S$, but the vertex $(-3, 2)$ is not.

Considering the maze with all its HEs deleted, we can label the connected components obtained in this way by \emph{upper infinite columns}, \emph{lower infinite columns}, \emph{infinite columns}, and \emph{finite columns} accordingly. For $M$ as in Figure~\ref{1}, there are $4$ upper infinite columns, e.g. the infinite path $(-2,2), (-2, 3), \hdots$; there are also $4$ lower infinite columns, e.g. the infinite path $(-2, -4), (-2, -5), \hdots$; the infinite columns are $c_{-3}, c_{-4}, \hdots$ and $c_2, c_3, \hdots$; examples of finite columns are $(-2, 1)$, the path $(-2, -1), (-2, 0)$ or the path $(0, 0), (0, 1), (0, 2), (0, 3)$.

Considering only the HEs in $S$, we call a \emph{pass} any of the following edges:
\begin{enumerate}
\item the HE of smallest latitude between two upper infinite columns, or between an upper infinite column and an infinite column, e.g. the edge between $(-2, 4)$ and $(-1, 4)$ or the edge between $(1, 3)$ and $(2, 3)$, respectively in Figure~\ref{1};
\item the HE of largest latitude between two lower infinite columns, or between an lower infinite column and an infinite column, e.g. the edge between $(0, -3)$ and $(1, -3)$ or the edge between $(1, 1)$ and $(2, 1)$, respectively in Figure~\ref{1};
\item the HE of smallest latitude between two infinite columns with respect to the well order on $\Z: 0<1<-1<2<-2< \hdots $, e.g. the edge between $(2, 0)$ and $(3, 0)$ in Figure~\ref{1}.
\end{enumerate}

Every maze has a finite number of VNEs, so every maze has a finite number of passes. We further note that between two consecutive columns in $S$ there might not be a pass, if there is no HE between them. Finally, as a few more revealing examples, we note that in Figure~\ref{1} the edge between $(-3, 1)$ and $(-2, 1)$ is not a pass, and neither is the one between $(-4, -1)$ and $(-3, -1)$ which is not in $S$; however, the edge between $(3, 1)$ and $(4, 1)$ is in $S$ and it is also a pass.

Furthermore, we define the following regions: the \emph{obstacle strip} is the smallest vertical strip that contains all VNEs, together with all the HEs incident to it on its left and right sides. For example, in Figure~\ref{1} the obstacle strip is formed from the columns $c_{-2}, \hdots c_1$ together with all the HEs incident with any vertex on $c_{-2}$ or $c_1$. The \emph{west strip} and \emph{east strip} are the vertical strips situated at the left and right of the obstacle strip, respectively. For example, in Figure~\ref{1} is formed from the columns $c_{-3}, c_{-4}, \hdots$ and the east strip is formed from the columns $c_2, c_3, \hdots$. We note that the obstacle strip and the east or west strip may intersect only in a certain set of vertices, i.e. the eastern or western endvertices of the edges that emerge on the right or left side of the obstacle strip, respectively; they have no edges in common. 

We define the \emph{primary rectangle} to be the subgraph contained in the smallest rectangle that contains the origin, the destination point, all the passes and all the VNEs. The primary rectangle is well defined, as there is a finite number of passes and VNEs. Let $p$ be the smallest positive integer such that the primary rectangle is strictly contained in the interior of the square centred at the origin with the set of vertices $\{ ( \pm p, \pm p )\}$ (see Figure~\ref{2}). We call $p$ the \emph{parameter of the primary rectangle}.
\begin{figure}[h!]
\centering
\resizebox{0.5\textwidth}{!}{
\begin{tikzpicture}
    [
        dot/.style={circle,draw=black, fill,inner sep=2pt},
        cross/.style={cross out, draw=black, minimum size=2*(#1-\pgflinewidth), inner sep=5pt, outer sep=5pt},
    ]

\foreach \x in {-4,...,4}{
 \foreach \y in {-4,...,4}{
    \node[dot] at (\x,\y){ };
}}

\foreach \x in {-2,...,4}{
\draw node[fill,circle,inner sep=5.5pt,minimum size=2pt] at (\x, 4) {};
}
\foreach \x in {-2,...,3}{
\draw node[fill,circle,inner sep=5.5pt,minimum size=2pt] at (\x, 3) {};
}
\foreach \x in {-3,...,3}{
\draw node[fill,circle,inner sep=5.5pt,minimum size=2pt] at (\x, 2) {};
}
\foreach \x in {-3,...,4}{
\draw node[fill,circle,inner sep=5.5pt,minimum size=2pt] at (\x, 1) {};
}
\foreach \x in {-2,...,3}{
\draw node[fill,circle,inner sep=5.5pt,minimum size=2pt] at (\x, 0) {};
}
\foreach \x in {-2,...,3}{
\draw node[fill,circle,inner sep=5.5pt,minimum size=2pt] at (\x, -1) {};
}
\foreach \x in {-3,...,4}{
\draw node[fill,circle,inner sep=5.5pt,minimum size=2pt] at (\x, -2) {};
}
\foreach \x in {-2,...,3}{
\draw node[fill,circle,inner sep=5.5pt,minimum size=2pt] at (\x, -3) {};
}

\draw[line width=0.7mm, red] (-4, -4) -- (-4, 4);
\draw[line width=0.7mm, red] (-3, -4) -- (-3, 4);
\draw[line width=0.7mm, red] (2, -4) -- (2, 4);
\draw[line width=0.7mm, red] (3, -4) -- (3, 4);
\draw[line width=0.7mm, red] (4, -4) -- (4, 4);
\draw[line width=0.7mm, red] (-2, 4) -- (-2, 2);
\draw[line width=0.7mm, red] (-2, 0) -- (-2, -1);
\draw[line width=0.7mm, red] (-2, -2) -- (-2, -4);
\draw[line width=0.7mm, red] (-1, 3) -- (-1, 1);
\draw[line width=0.7mm, red] (-1, -1) -- (-1, -2);
\draw[line width=0.7mm, red] (-1, -3) -- (-1, -4);
\draw[line width=0.7mm, red] (0, -2) -- (0, -4);
\draw[line width=0.7mm, red] (0, 0) -- (0, 3);
\draw[line width=0.7mm, red] (1, 4) -- (1, 3);
\draw[line width=0.7mm, red] (1, 2) -- (1, -4);

\draw[line width=0.7mm, red] (3, 4) -- (4, 4);
\draw[line width=0.7mm, green] (-2, 4) -- (1, 4);
\draw[line width=0.7mm, red] (1, 4) -- (2, 4);
\draw[line width=0.7mm, red] (-2, 3) -- (0, 3);
\draw[line width=0.7mm, green] (1, 3) -- (2, 3);
\draw[line width=0.7mm, red] (2, 3) -- (3, 3);
\draw[line width=0.7mm, red] (-4, 3) -- (-3, 3);
\draw[line width=0.7mm, red] (-4, 2) -- (-3, 2);
\draw[line width=0.7mm, red] (-1, 2) -- (1, 2);
\draw[line width=0.7mm, red] (2, 2) -- (3, 2);
\draw[line width=0.7mm, green] (-3, 2) -- (-2, 2);
\draw[line width=0.7mm, red] (-1, 1) -- (1, 1);
\draw[line width=0.7mm, green] (1, 1) -- (2, 1);
\draw[line width=0.7mm, red] (-3, 1) -- (-2, 1);
\draw[line width=0.7mm, green] (3, 1) -- (4, 1);
\draw[line width=0.7mm, red] (-2, 0) -- (-1, 0);
\draw[line width=0.7mm, red] (1, 0) -- (2, 0);
\draw[line width=0.7mm, green] (2, 0) -- (3, 0);
\draw[line width=0.7mm, red] (-2, -1) -- (1, -1);
\draw[line width=0.7mm, red] (2, -1) -- (3, -1);
\draw[line width=0.7mm, red] (-4, -1) -- (-3, -1);
\draw[line width=0.7mm, red] (-4, -2) -- (-3, -2);
\draw[line width=0.7mm, green] (-3, -2) -- (-2, -2);
\draw[line width=0.7mm, red] (2, -2) -- (4, -2);
\draw[line width=0.7mm, green] (-1, -3) -- (1, -3);
\draw[line width=0.7mm, red] (1, -3) -- (2, -3);
\draw[line width=0.7mm, red] (3, -3) -- (4, -3);
\draw[line width=0.7mm, green] (-2, -4) -- (-1, -4);
\draw[line width=0.7mm, red] (0, -4) -- (2, -4);
\draw[line width=0.7mm, red] (-4, -4) -- (-2, -4);

\draw (3,-2) node[cross] {};

\foreach \x in {-4,...,4}
    \draw (\x,.1) -- node[below,yshift=-2mm] {\x} (\x,-.1);
\foreach \y in {-4,...,4}
    \draw (.1,\y) -- node[below,xshift=-4mm, yshift=3mm] {\y} (-.1,\y);
\draw[->,line width=0.15mm] (0,-4.5) -- (0,4.5);
\draw[->,line width=0.15mm] (-4.5,0) -- (4.5,0);
\end{tikzpicture}
}
\caption{We assume that there are no vertical edges removed other than the ones shown in the figure. The destination point is $(3, -2)$. All the passes are marked with green edges. The primary rectangle has vertices $(-3, 4), (4, 4), (-4, 4), (-3, -4)$ and $p=5$. The special vertices are drawn larger.} \label{2}
\end{figure}

We define the \emph{special vertices} to be all the vertices in $S$ that are connected to the destination point and have the same latitude as an endpoint of a VNE (see Figure~\ref{2}). Notice that there is a finite number of special vertices and label them $1,2, \hdots, s$. We note that there must exist a path contained in the primary rectangle between each special vertex and the destination point. Indeed, the fact that all all VNEs are contained in the primary rectangle and the way we define passes allows us to find paths contained in the primary rectangle between the accessible infinite/upper and lower infinite and finite columns of the primary rectangle; this further allows us to find paths contained in the primary rectangle from the special points to the destination point.

Let $l_i$ be the length of a shortest path contained in the primary rectangle from $i \in \{1, 2, \hdots, s\}$ to the destination point and set the following constant which depends only on the local configuration of the maze inside the primary rectangle:
$$l'=1+ ((((l_1)2+l_2)2+l_3)2+ \hdots + l_{s-1})2+l_s.$$

The \emph{secondary rectangle} is obtained from the primary rectangle by augmenting it $l'$ units in each of the four directions. Note that given the local configuration of the maze inside the secondary rectangle, we can construct a finite algorithm $L'$ such that if the robot follows $L'$ starting from any special point, it visits the destination point without leaving the secondary rectangle. Indeed, assume the robot starts at the special point labeled $1$. We construct firstly a finite algorithm $L_1$ that takes the robot to the destination point with $|L_1| = l_1$. Then assume that the robot starts at the special point labeled $2$ and that it first follows the algorithm $L_1$. We write the algorithm $L_2$ as a concatenation of two sub-algorithms. The first cancels the action of $L_1$ and brings the robot back to the special point $2$ and the second sub-algorithm takes the robot further to the destination point. This can be done with at most $l_1 + l_2$ instructions, so without loss of generality $|L_2| \le l_1 + l_2$. Moreover, if the robot starts at any of the special points labeled $1$ or $2$ and follows $L_1L_2$ it gets to the destination point. We continue in this way: given $L_1, L_2, \hdots, L_{i-1}$ and assuming that the robot starts at the special point $i$, we construct $L_i$ as a concatenation of two sub-algorithms. The first brings the robot back to the special point $i$ and the second takes the robot further to the destination point. This can be done with $|L_i| \leq (|L_1| + \hdots + |L_{i-1}|)+l_i$. Finally, take $L'=L_1L_2 \hdots L_s$ with $|L'| \leq l'$ which has the property that if the robot follows $L'$ starting from any special point, it visits the destination point. The role of adding $1$ to the sum is that to ensure that the secondary rectangle augments non-trivially the primary rectangle.

In the rest of the section we define a series of very technical configurations. We group the mazes according to these configurations and obtain the desired countable cover at the end of the section. The importance of these configurations only becomes clear in Section~\ref{final} and we will recall them where appropriate.

For simplicity we use cardinal directions in our definitions. We say that \emph{the row $r_i$ is to the north of the row $r_j$} or \emph{above row $r_j$}, provided $i>j$. By an \emph{easternmost H(N)E} $e$ with a certain property $\mathcal{P}$ we mean that $e$ has $\mathcal{P}$ and no other H(N)E with $\mathcal{P}$ has longitude greater than $e$. These definitions easily extend to the other directions: \emph{westernmost}, \emph{uppermost}, \emph{lowermost}. In pairings (e.g.``the lowermost easternmost HNE with $\mathcal{P}$'') we always give priority to the first direction and then to the second one. For example, in order to find the uppermost easternmost HNE below all VNEs in the west strip, we first look for the row of highest latitude below all VNEs on which there is a HNE in the west strip and then on this row we pick the one HNE in the west strip with the largest longitude.

Define a \emph{west bump} to be any of the easternmost HNE in the west strip or at the border between the west strip and the obstacle strip (i.e. with at least one vertex in the west strip) on a row that intersects some finite column. For example in Figure~\ref{1}, the HNE between $(-4, 1)$ and $(-3, 1)$ and the HNE between $(-3, 2)$ and $(-2, 2)$ are both west bumps with the rows $r_1$ and $r_2$ intersecting the finite column $(-1, 1), (-1, 2), (-1, 3)$. Using symmetry, define similarly an \emph{east bump}. We note that there are a finite number of west and east bumps.

If there exists a row which is a path when restricted to the west strip, but contains a HNE, then call the smallest such row with respect to the standard well order on $\Z$ a \emph{magical west row}; define its \emph{west cutoff} to be its westernmost HNE. Define similarly a \emph{magical east row} and its \emph{east cutoff}.

We define a \emph{west pipe} to be any of the easternmost configurations in the west strip of three vertices $(x, y), (x+1, y), (x+2, y)$ where between $(x, y)$ and $(x+1, y)$ there is a HE and between $(x+1, y)$ and $(x+2, y)$ there is a HNE, which can be at the border between the west strip and the obstacle strip. For example in Figure~\ref{1}, $(-4, 2), (-3, 2), (-2, 2)$ is a west pipe. Note that a maze may have infinitely many west pipes. We define similarly an \emph{east pipe} to be any of the westernmost configurations in the east strip of three vertices $(x, y), (x+1, y), (x+2, y)$ where between $(x+1, y)$ and $(x+2, y)$ there is a HE and between $(x, y)$ and $(x+1, y)$ there is a HNE, which can be at the border between the east strip and the obstacle strip. For example in Figure~\ref{1}, $(2, 1), (3, 1), (4, 1)$ is an east pipe.

Furthermore, we define the \emph{special west pipe} to be the west pipe on the smallest row that has a west pipe, with respect to the standard well order on $\Z$, if such a row exists. Note that in Figure~\ref{1} the special west pipe may not be $(-4, -1), (-3, -1), (-2, -1)$ as we do not know from the picture whether there are west pipes on $r_0$ or $r_1$, but we do know that it is the west pipe on $r_{-1}$. We define similarly the \emph{special east pipe}. Note that if a maze does not have any special west pipe, then in the west strip any row is either a path or it is the complement of an infinite path followed by a finite path. 

We define an \emph{almost empty west row} to be a row that in the west strip is the complement of an infinite path followed by a non-empty finite path. Thus, in Figure~\ref{1}, both $r_0$ and $r_1$ cannot be almost empty west rows as the non-empty finite path in the west strip is missing for both of these columns; the edge between $(-3, 1)$ and $(-2, 1)$ does not belong to the west strip. We define similarly an \emph{almost empty east row}. We define the \emph{special almost empty west row} to be the smallest almost empty west row with respect to the standard well order on $\Z$, if such a row exists. We define the \emph{west cutoff} of a special almost empty west row to be its easternmost HNE in the west strip. We define similarly the \emph{special almost empty east row} and its \emph{east cutoff}. For example, if in Figure~\ref{1} $r_2$ was the special almost empty east row, its east cutoff would be the edge between $(3, 2)$ and $(4, 2)$. Finally, we define an \emph{empty west row} to be a row that in the west strip is empty; for the `special' label in this context, we need in addition that the latitude of the row is large in absolute value. So we define the \emph{special empty west row} to be the empty west row of smallest latitude, greater than $-3p$ (where the parameter of the primary rectangle, $p$, is defined above) with respect to the standard well order on $\Z$, if such a row exists. We define the \emph{natural special empty west row} to be the empty west row of smallest latitude, without the additional constraint. We define similarly the \emph{special empty east row} and the \emph{natural special empty west row}.

We define the \emph{upper west pass} to be the lowermost HE between the easternmost infinite column of the west strip and the westernmost upper infinite column with the property that its latitude $k$ is greater than that of any pass in the obstacle strip, if such a HE exists. We define similarly the \emph{upper east pass}, \emph{lower west pass} and \emph{lower east pass}. For example, in Figure~\ref{2} the edge between $(-3, -4)$ and $(-2, -4)$ is the lower west pass. Also, in Figure~\ref{3} the upper/lower west/east passes are the green edges.

Let us call the pair of columns at the border between the west strip and the obstacle strip $(c_a, c_{a+1})$, so $c_a$ is in the west strip and $c_{a+1}$ is in the obstacle strip. Let us call the pair of columns at the border between the obstacle strip and the east strip $(c_b, c_{b+1})$, so $c_b$ is in the obstacle strip and $c_{b+1}$ is in the east strip. We define the \emph{west ascending chain} (if such a structure exists) to be the finite sequence of HEs: $HE_a, HE_{a+1}, \hdots , HE_b$ such that $HE_a$ is the upper west pass and $HE_{m}$ is the lowermost HE between the pair of columns $(c_m, c_{m+1})$ at latitude at least that of $HE_{m-1}$ for $m=a+1, \hdots, b$ (see Figure~\ref{3}). Similarly, we define the \emph{east ascending chain}, \emph{west descending chain} and \emph{east descending chain}. If a west ascending chain $HE_a, \hdots , HE_b$ exists with $HE_b$ on some row $r_t$, we define the \emph{upper west constant} $c_{uw}:=t+p$, where $p$ is the parameter of the primary rectangle. We define similarly the constants \emph{lower west constant}, \emph{upper east constant} and \emph{lower east constant}.
\begin{figure}[h!]
\centering
\resizebox{0.75\textwidth}{!}{
\begin{tikzpicture}
    [
        dot/.style={circle,draw=black, fill,inner sep=2pt},
        cross/.style={cross out, draw=black, minimum size=2*(#1-\pgflinewidth), inner sep=4pt, outer sep=4pt},
    ]

\foreach \x in {-6,...,6}{
 \foreach \y in {-6,...,6}{
    \node[dot] at (\x,\y){ };
}}

\draw[line width=0.7mm, red] (-6, -6) -- (-6, 6);
\draw[line width=0.7mm, red] (-5, -6) -- (-5, 6);
\draw[line width=0.7mm, red] (-4, -6) -- (-4, 6);
\draw[line width=0.7mm, red] (-3, -6) -- (-3, -4);
\draw[line width=0.7mm, red] (-3, -2) -- (-3, 6);
\draw[line width=0.7mm, red] (-2, -6) -- (-2, -4);
\draw[line width=0.7mm, red] (-2, -1) -- (-2, 6);
\draw[line width=0.7mm, red] (-1, -6) -- (-1, -4);
\draw[line width=0.7mm, red] (-1, -3) -- (-1, 0);
\draw[line width=0.7mm, red] (-1, 1) -- (-1, 6);
\draw[line width=0.7mm, red] (0, -6) -- (0, -4);
\draw[line width=0.7mm, red] (0, 0) -- (0, 6);
\draw[line width=0.7mm, red] (1, -6) -- (1, -4);
\draw[line width=0.7mm, red] (1, 1) -- (1, 6);
\draw[line width=0.7mm, red] (2, -6) -- (2, -2);
\draw[line width=0.7mm, red] (2, -1) -- (2, 6);
\draw[line width=0.7mm, red] (3, -6) -- (3, -5);
\draw[line width=0.7mm, red] (3, -2) -- (3, 0);
\draw[line width=0.7mm, red] (3, 1) -- (3, 6);
\draw[line width=0.7mm, red] (4, -6) -- (4, 6);
\draw[line width=0.7mm, red] (5, -6) -- (5, 6);
\draw[line width=0.7mm, red] (6, -6) -- (6, 6);

\draw[line width=0.7mm, red] (-5, 6) -- (-3, 6);
\draw[line width=0.7mm, red] (-2, 6) -- (-1, 6);
\draw[line width=0.7mm, red] (0, 6) -- (1, 6);
\draw[line width=0.7mm, red] (3, 6) -- (6, 6);
\draw[line width=0.7mm, red] (-6, 5) -- (-5, 5);
\draw[line width=1.5mm, blue] (-1, 5.2) -- (4, 5.2);
\draw[line width=0.7mm, red] (-1, 5) -- (6, 5);
\draw[line width=1.5mm, blue] (-2, 4.2) -- (-1, 4.2);
\draw[line width=0.7mm, red] (-2, 4) -- (-1, 4);
\draw[line width=0.7mm, red] (0, 4) -- (2, 4);
\draw[line width=0.7mm, red] (5, 4) -- (6, 4);
\draw[line width=0.7mm, red] (-6, 3) -- (-5, 3);
\draw[line width=1.5mm, blue] (-3, 3.2) -- (-2, 3.2);
\draw[line width=0.7mm, red] (-3, 3) -- (-2, 3);
\draw[line width=0.7mm, green] (3, 3) -- (4, 3);
\draw[line width=0.7mm, red] (2, 3) -- (3, 3);
\draw[line width=0.7mm, green] (-4, 2) -- (-3, 2);
\draw[line width=1.5mm, blue] (-4, 2.2) -- (-3, 2.2);
\draw[line width=0.7mm, red] (-3, 2) -- (-1, 2);
\draw[line width=0.7mm, red] (4, 2) -- (6, 2);
\draw[line width=0.7mm, red] (-4, 1) -- (2, 1);
\draw[line width=0.7mm, red] (-6, 1) -- (-5, 1);
\draw[line width=0.7mm, red] (2, 1) -- (3, 1);
\draw[line width=0.7mm, red] (2, 0) -- (6, 0);
\draw[line width=0.7mm, red] (-1, -1) -- (2, -1);
\draw[line width=0.7mm, red] (-6, -2) -- (-5, -2);
\draw[line width=0.7mm, red] (-4, -2) -- (-3, -2);
\draw[line width=0.7mm, red] (-2, -2) -- (-1, -2);
\draw[line width=0.7mm, red] (0, -2) -- (1, -2);
\draw[line width=0.7mm, red] (5, -2) -- (6, -2);
\draw[line width=0.7mm, red] (-3, -3) -- (1, -3);
\draw[line width=0.7mm, red] (4, -3) -- (5, -3);
\draw[line width=0.7mm, red] (-6, -4) -- (-2, -4);
\draw[line width=0.7mm, red] (2, -4) -- (5, -4);
\draw[line width=0.7mm, red] (-5, -6) -- (-4, -6);
\draw[line width=0.7mm, green] (-4, -6) -- (-3, -6);
\draw[line width=0.7mm, red] (-3, -6) -- (1, -6);
\draw[line width=0.7mm, red] (2, -6) -- (3, -6);
\draw[line width=0.7mm, green] (3, -6) -- (4, -6);
\draw[line width=0.7mm, red] (4, -6) -- (5, -6);
\draw[line width=0.7mm, red] (-2, -5) -- (4, -5);

\draw (2,-2) node[cross] {};

\foreach \x in {-6,...,6}
    \draw (\x,.1) -- node[below,yshift=-1mm] {\x} (\x,-.1);
\foreach \y in {-6,...,6}
    \draw (.1,\y) -- node[below,xshift=-3mm, yshift=3mm] {\y} (-.1,\y);
\draw[->,line width=0.15mm] (0,-6.5) -- (0,6.5);
\draw[->,line width=0.15mm] (-6.5,0) -- (6.5,0);
\end{tikzpicture}
}
\caption{We assume that there are no vertical edges removed other than the ones shown in the figure. The green edges are the upper/lower west/east passes. Neither the HE $(-4, -2), (-3, -2)$ nor $(-4, 1), (-3, 1)$ is the upper west pass, as they are not above all the passes in the obstacle strip. The blue coloured edges in order from left to right form the west ascending chain.} \label{3}
\end{figure}

Assume that the upper west pass is on some row $r_k$. We define an \emph{upper west paired HNEs} to be any pair of HNEs with the same longitude in the west strip such that the upper HNE is at latitude $k$ and the lower HNE is at latitude at most $k-c_{uw}$, where $c_{uw}$ defined above is the upper-west constant. We define similarly the \emph{upper east paired HNEs}, \emph{lower west paired HNEs} and \emph{lower east paired HNEs} with respect to the corresponding constants $c_{ue}$, $c_{lw}$, and $c_{le}$, respectively. We define the \emph{special upper west paired HNEs} (if such a structure exists) to be the upper west paired HNEs with the uppermost and easternmost lower HNE. We recall that in all such instances we give priority to the first condition and then the second one. We define similarly the \emph{special upper east paired HNEs}, \emph{special lower west paired HNEs} and \emph{special lower east paired HNEs}.

With $k$ being as always the latitude of the upper west pass, we define the \emph{upper west pipe} to be the west pipe on the row $r_k$, if one exists. We define similarly the \emph{lower west pipe}, the \emph{upper east pipe} and the \emph{lower east pipe}. We define the \emph{upper west cutoff} to be the easternmost HNE on the row $r_k$ in the west strip, if one exists. We define similarly the \emph{lower west cutoff}, the \emph{upper east cutoff} and the \emph{lower east cutoff}.

We define the \emph{upper west HNE} (if such a structure exists) to be the lowermost westernmost HNE at the north-east of the uppermost westernmost VNE. We define similarly the \emph{upper east HNE}, \emph{lower west HNE} and the \emph{lower east HNE}.

Being consistent with the constants $a$ and $b$ introduced in the definition of the west ascending chain, we define the parameters $h_{(m, m+1)}$ to be the latitude of the uppermost HE between two consecutive upper infinite columns or between an infinite column and an upper infinite column on $c_m$ and $c_{m+1}$ if such a HE exists and $\infty$ otherwise for $m=a, \hdots, b$. 

We define similarly the parameters $l_{(m, m+1)}$ to be the latitude of the lowermost HE between two consecutive lower infinite columns or between an infinite column and a lower infinite column on $c_m$ and $c_{m+1}$ if such a HE exists and infinity otherwise for $m=a, \hdots, b$.

We define the parameters $w_1, e_1, w_2, e_2, w_3, e_3, w_4, e_4$ to be the latitude of the magical west/east row, the special almost empty west/east row, the special empty west/east row, and the natural special empty west/east row if such a configuration exists and infinity otherwise, respectively. 

We finally define the \emph{tertiary rectangle} to be the subgraph contained in the smallest rectangle that contains the secondary rectangle and all the west/east bumps, upper/lower west/east cutoffs, upper/lower west/east pipes, special west/east pipes, upper/lower west/east passes, west/east ascending/descending chains, upper/lower west/east paired HNEs and the upper/lower west/east HNEs.

As in the case of the primary rectangle, let $q$ be the smallest positive integer such that the tertiary rectangle is strictly contained 
in the interior of the square centred at the origin with vertices $\{ \pm \frac{q}{3},  \pm \frac{q}{3} \}$. We call $q$, together with the upper/lower west/east constants, $h_{(m, m+1)}, l_{(m, m+1)}$ for $m=a, \hdots, b$, $w_1, e_1, w_2, e_2, w_3, e_3, w_4, e_4$ the \emph{parameters of the tertiary rectangle}. Therefore, when we construct an algorithm by inspecting the tertiary rectangle, we have access to the subgraph contained in the tertiary rectangle and the values of all its parameters.

We group the mazes in $\overline{\mathcal{F}}$ according to agreeing on the destination point, the subgraph contained in the square $\{ \pm q, \pm q \}$ and the set of parameters of the tertiary rectangle. We thus obtain a countable cover $\overline{\mathcal{F}} = \cup_{i=1}^{ \infty} F_i$. It is obvious directly from the definitions that we set above that such a construction is achievable.

All of these definitions are used in the following section to prove Theorem~\ref{ch2th2} and the relevant ones will be recalled where appropriate.

\section{The General Model}\label{final}
In this section we prove Theorem~\ref{ch2th2}. 

Following our strategy, we assume that we are given $F_i$ and a finite algorithm $X$ and we aim to construct a finite algorithm $A$ such that $XA$ visits the destination point of $F_i$. We construct the algorithm $A$ from several sub-algorithms treated in separate subsections, each with a specific task: in \textbf{Part I} we position the robot in the east strip, in \textbf{Part II} we position the robot in the west strip at latitude 0 and in \textbf{Part III} we guide the robot through the destination point. In each part we consider a finite number of cases for the subsets $F_i$ so that although a sub-algorithm depends quantitatively on $F_i$ and $X$, it does depend qualitatively only on the case. We treat each case separately. According to their degree of generality, we label the broader cases as ``Propositions'', and the more specific cases as ``Claims''.

\begin{proof}[Proof of Theorem~\ref{ch2th2}]
Let $F_i$ be any of the classes of mazes defined above and assume we are given a finite algorithm $X$. Let $\lambda := |X|$.
\subsection*{Part 0.} 
We recall the finite algorithm $L'$ defined in Section~\S\ref{fnv} for a particular maze $M$, which had the property that if the robot starts at any special point of $M$ and follows $L'$, it visits the destination point. Take $M \in F_i$ and construct its $L'$ as described in Section~\S\ref{fnv}. We claim that for any $M' \in F_i$, the algorithm $L'$ has the same property in $M'$, i.e. if the robot starts at any special point of $M'$ and follows $L'$, it visits the destination point of $M'$. This follows from the fact that all the mazes in $F_i$ share the destination point, the secondary rectangle and in particular the set of special points. Therefore, we pick this $L'$ as a representative for the set of mazes $F_i$.

We now define the algorithm $$L=L_E=L' \text{ } N^{\varepsilon} \text{ } ME(|L' N^{\varepsilon}|, |L' N^{\varepsilon}|),$$ where the correcting constant ${\varepsilon} \in \Z$ is picked such that $|L' N^{\varepsilon}|_N=|L' N^{\varepsilon}|_S$ and therefore $|L|_N=|L|_S$; let $l:=|L|$. The counterpart of $L=L_E$ is $$L_W = L' \text{ } N^{\varepsilon} \text{ } MW(|L' N^{\varepsilon}|, |L' N^{\varepsilon}|),$$ and as before we have $|L_W|_N=|L_W|_S$ and also $|L_W|=l$.

The algorithm $L$ is a generic algorithm used in several other algorithms below. We remark that if the robot starts from a special point and it follows $L$ in any maze in $F_i$, it gets to the destination point; this property is inherited from $L'$. We further note that if the robot is at the origin on a maze with no VNEs and it follows $L$, then it returns to the $x$-axis and its longitude does not decrease. These properties are crucial in order to apply Lemma~\ref{l2}.

We finally note that all mazes in $F_i$ also share the same parameter of the primary rectangle $p$ and parameter of the tertiary rectangle $q$ and we keep this notation consistent for the rest of the proof.
\subsection*{Part I}
The algorithm $rough \_ positioning \_ east$ defined in this part aims to either position the robot in the east strip or to make the robot visit the destination point. We define
\[
RPE := ME (\lambda+p, \lambda+p) \text{ } N^{l+\lambda+4p} \text{ } L \text{ } S^{2(l+\lambda+4p)} \text{ } L.
\]
\begin{prop}\label{pp1}
For any maze in $F_i$, after the robot follows the algorithm $X \text{ } RPE$, it is either in the east strip or it has visited the destination point.
\end{prop}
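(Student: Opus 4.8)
The plan is to analyze the effect of $X\,RPE$ on the robot by following the four sub-algorithm blocks of $RPE$ in order. After $X$, the robot is at some point with $|X|_N - |X|_S$ bounded by $\lambda$, hence within the square of side $\lambda$ around the origin; in particular its latitude has absolute value at most $\lambda$. I would first argue about the block $ME(\lambda+p,\lambda+p)$: by the properties of $move\_east$ (the particular case of Lemma~\ref{l2}), following this block drives the robot eastward, oscillating about its current row within latitudes $\pm(\lambda+p)$ of it. The key claim is that this block either carries the robot out of (or to the eastern boundary of) the obstacle strip — into the east strip — or, if it gets stuck before that, then the pair of consecutive columns where it halts must fail to be joined at any latitude in the relevant $2(\lambda+p)+1$-window. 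Since the obstacle strip has width at most $2p$ and the robot starts within longitude $\lambda$ of the origin, $e = \lambda+p$ is large enough that the robot would exit the obstacle strip unless it is blocked by a pair of columns with no horizontal edge in that latitude band; but such a pair can only be blocked by VNEs or by a horizontal edge sitting outside the band $[-(\lambda+p)+y,\ (\lambda+p)+y]$ — and here I'd invoke that all VNEs lie inside the primary rectangle, hence within longitude $p$, and within latitude $p$, so a column pair inside the obstacle strip that is joined at all is joined within latitude $p$, which lies in the band.

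Next, assuming the robot has \emph{not} reached the east strip, I would show it must have landed on or next to a column pair blocked by VNEs, which forces it onto a column adjacent to a VNE — and then argue that the vertex it sits on, after the $N^{l+\lambda+4p}$ push upward and the $S^{2(l+\lambda+4p)}$ push downward, is made to coincide (at some intermediate point of the ascent or descent) with a special vertex: a vertex in $S$ connected to the destination and sharing a latitude with an endpoint of a VNE. The large exponent $l+\lambda+4p$ is chosen precisely so that the vertical sweep covers the full latitude range of the primary rectangle (where all VNE-endpoints live) starting from any reachable latitude, and also dominates $|L|$, $|X|$, and the vertical extent $2p$ of everything relevant. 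Once the robot is at a special vertex, the interleaved copies of $L$ (which by Part~0 take the robot from any special vertex to the destination in any maze of $F_i$) guarantee it visits the destination. One runs $L$ twice — once after the northward sweep, once after the southward sweep — to catch the special vertex whether it lies above or below the robot's post-$ME$ latitude.

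The main obstacle I anticipate is the careful bookkeeping needed to verify that the robot, when it halts during $ME(\lambda+p,\lambda+p)$ without having entered the east strip, is genuinely \emph{on} a column immediately west of a VNE-bearing column rather than merely somewhere in the obstacle strip — i.e.\ ruling out that it halts because of a horizontal non-edge that is \emph{not} caused by the finite VNE configuration. This is where the precise window size $\lambda+p$, the fact that $X$ moves the robot at most $\lambda$ horizontally and vertically, and the placement of all VNEs inside the primary rectangle (controlled by $p$) must be combined: any blocking column pair strictly inside the obstacle strip that is joined somewhere is joined at a latitude in $[-p,p]$, hence inside the oscillation band, so the only way to get permanently stuck is at a pair not joined at all — which, since the board is strongly connected and the VNEs are confined to the primary rectangle, means the robot is pinned against exactly the VNE obstruction and therefore adjacent to an endpoint of a VNE. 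A secondary technical point, handled by the correcting constant $\varepsilon$ inside $L$ and the matching $N$/$S$ counts, is ensuring the net vertical displacement of $L$ is zero so that the vertical sweeps $N^{l+\lambda+4p}$ and $S^{2(l+\lambda+4p)}$ behave as intended and the robot's latitude is controlled throughout.
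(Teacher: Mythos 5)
Your overall skeleton matches the paper's: $ME(\lambda+p,\lambda+p)$ pushes the robot eastward out of the west strip, and the two vertical sweeps with interleaved copies of $L$ then convert ``stuck in the obstacle strip'' into ``standing on a special vertex'', whence $L$ reaches the destination. The genuine problem is in your analysis of where the robot can be after the $ME$ block. You claim a dichotomy: either the robot exits into the east strip, or it halts at a column pair not joined within the oscillation band and is therefore ``pinned against exactly the VNE obstruction'', hence adjacent to an endpoint of a VNE. This step would fail. The guarantees you are invoking (advance one column per cycle, cross at the lowermost joining latitude, return to the starting row after each locomotory move) come from Lemma~\ref{l2}, which is proved only for boards with no vertical non-edges. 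Once the robot enters the obstacle strip, every column it occupies contains a VNE, a move $N^bES^b$ no longer returns it to its row, its latitude drifts, and it can become trapped in a finite column even though the column pair to its east is joined well inside the band --- the joining edge is simply unreachable from within that finite column. So you cannot conclude that failure to reach the east strip localises the robot next to a VNE endpoint, nor control its post-$ME$ latitude.

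The paper proves only the weaker statement that after $X\,ME(\lambda+p,\lambda+p)$ the robot is not in the \emph{west} strip: the Lemma~\ref{l2} analysis is applied only while the robot is still in the VNE-free west strip, where it is valid, and since $ME$ contains no $W$ the robot cannot return there. It then handles ``somewhere in the obstacle strip'' uniformly: every column of the obstacle strip meets a VNE, so the robot sits in a finite, lower infinite, or upper infinite column; the $N$-sweep pins it against the VNE capping a finite or lower infinite column, whose top vertex is by definition a special vertex, and the $S$-sweep does the same for the bottom vertex of an upper infinite column, the intermediate $L$ being harmless at extreme latitudes. Your second phase already contains exactly these ingredients and never actually uses the ``adjacent to a VNE'' claim, so the repair is to drop that claim and run your sweep argument from the weaker hypothesis. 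You should also add the (routine but necessary) verification that once the robot is in the east strip it stays there throughout $N^{l+\lambda+4p}\,L\,S^{2(l+\lambda+4p)}\,L$, since the proposition asserts the robot ends in the east strip in that case.
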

\begin{proof}
Pick any maze in $F_i$. We claim that by our choice of parameters of $ME$, after the robot follows $X \text{ } ME (\lambda+p, \lambda+p)$, it is either in the east strip or in the obstacle strip, but not in the west strip. Indeed, assume for a contradiction that after the robot follows $X \text{ } ME (\lambda+p, \lambda+p)$, it is in the west strip. Denote by $\textbf{x} = (x, y)$ the position of the robot after it follows $X$ starting from the origin. By assumptionm $\textbf{x}$ must be in the west strip as the algorithm $ME$ has no instruction $W$. Therefore, as the robot follows $ME (\lambda+p, \lambda+p)$, it does not visit any endvertex of a VNE. We recall that all mazes in $F_i \subset \overline{\mathcal{F}}$ have the property that for every VNE at least one of its vertices is accessible, hence the westernmost column of the obstacle strip $c_{a+1}$ is accessible from $\textbf{x}$. The robot starts in the origin which is at most $p$ units in longitude away from the obstacle strip as the primary rectangle contains the origin and all the VNEs. Hence the column$c_{y+\lambda+p}$ is not in the west strip. Moreover, every pair of consecutive columns at longitude between $y$ and $a+1$ are connected by a HE at some latitude between $x+\lambda+p$ and $x-\lambda-p$, as the primary rectangle contains all the passes and the VNEs. Therefore, if the robot starts from $\textbf{x}$ and follows $ME (\lambda+p, \lambda+p)$, it gets to a longitude at least $y+\lambda+p$, which is not in the west strip. This contradiction proves the claim.

Hence, after the robot follows $X \text{ } ME (\lambda+p, \lambda+p)$, its longitude is at least $a+1$ and so it is either in the east strip or in the obstacle strip. In the former case, after the robot follows $X \text{ } RPE$, it remains in the east strip. Indeed, while the robot follows $N^{l+\lambda+4p} \text{ } L$ starting in the east strip, its latitude is too high to meet any VNE and on a maze with no VNE if the robot follows $L$ its longitude does not decrease so the robot remains in the east strip. Therefore, after the robot follows also $S^{2(l+\lambda+4p)} \text{ } L$ its latitude is too low to meet any VNE, and it remains in the east strip by a similar argument. To conclude, if the robot gets to the east strip after it follows the initial segment $X \text{ } ME (\lambda+p, \lambda+p)$, then it remains in the east strip after it follows $X \text{ } RPE$.

In the latter case, after the robot follows $X \text{ } ME (\lambda+p, \lambda+p)$, it is in the obstacle strip either in (1) a lower infinite column or a finite column or (2) an upper infinite column. In case (1), after the robot follows $X \text{ } ME (\lambda+p, \lambda+p) \text{ } N^{l+\lambda+4p}$ it gets to a special point. Therefore after the robot follows $X \text{ } ME (\lambda+p, \lambda+p) \text{ } N^{l+\lambda+4p} \text{ } L$, it gets to the destination point. In case (2), while the robot follows $N^{l+\lambda+4p} \text{ } L$ it does not meet any VNE and its longitude does not decrease, so after it follows the initial segment $X \text{ } ME (\lambda+p, \lambda+p) \text{ } N^{l+\lambda+4p} \text{ } L$, it is either in the east strip or in the obstacle strip in an upper infinite column. In both cases, it is clear that after the robot follows $X \text{ } RPE$ it is either in the east strip or it has visited the destination point.
\end{proof}
\begin{remark}
In the first part of the proof of Proposition~\ref{pp1} we argue that the parameters $(\lambda+p, \lambda+p)$ of $ME$ are large enough for the robot to have longitude at least $a+1$. The key of this argument is two-fold: firstly, all the passes are in the primary rectangle which has parameter $p$; secondly, if the robot starts from the origin and follows the algorithm $X$ with $|X|=\lambda$, it can not advance more than $\lambda$ columns east or west and any two consecutive columns between its initial and final position are connected at latitude no more than $\lambda$ in absolute value. We do not expand this argument every time we use it, but instead we use the phrase ``by our choice of parameters'' to mark that the same reasoning is used in similar instances to prove that the robot advances westwards/eastwards to the desired longitude.
\end{remark}

At the end of \textbf{Part I}, we note that although we used in the proof of Proposition~\ref{pp1} the fact there are no infinite columns in the obstacle strip, a variation of $RPE$ can be used to position the robot in the east strip, even if we drop this assumption. This note is important, because it shows that \textbf{Part I} can be generalised to improve Theorem~\ref{ch2th2} by dropping the consecutive column condition for the finite number of VNEs. To present this variation, assume that infinite columns are allowed in the obstacle strip, i.e. the (finitely many) VNEs need not be in consecutive columns. 

We define now the algorithm $RPE'$ that generalises $RPE$ as described above. It is formed by $\lambda+p$ subalgorithms $S_1, \hdots S_{\lambda+p}$ concatenated in order. We define
\[
S_i = N^{\lambda_i+p+2l} \text{ } L \text{ } S^{\mu_i+p+2l} \text{ } ME (\gamma_i, 1),
\]
for $i=1, \hdots, \lambda+p$. The parameters $\lambda_i, \mu_i, \gamma_i \in \mathbb{N}$ are chosen to be at least the number of instructions written in the whole algorithm until they occur, for example we can take $\lambda_1 = |X| = \lambda$, $\mu_1 = |X \text{ } N^{\lambda_1+p+2l} \text{ } L|$, $\gamma_1 = |X \text{ } N^{\lambda_1+p+2l} \text{ } L \text{ } S^{\mu_1+p+2l}|$, etc. Finally, let
\[
RPE' = S_1 \text{ } S_2 \text{ } \hdots \text{ } S_{\lambda+p}.
\]

Note that for the mazes we consider, we first replace all the VNEs with VEs which do not change the connected component of the origin, so every pair of consecutive columns in the obstacle strip must be connected by an accessible HE. The reason why $RPE'$ indeed generalises $RPE$ is similar to the argument in the proof of Proposition~\ref{pp1}: here, after the robot follows every $S_i$ it either moves at least one column to the east or it has visited the destination point.

Moving on from this digression, by Proposition~\ref{pp1} we may assume that we are given $F_i$ and a finite algorithm $X$ with $\lambda = |X|$ such that after the robot follows $X$ in any maze in $F_i$, it is either in the east strip or it has visited the destination point. Without loss of generality, we assume that the robot is in the east strip and our aim is to build a finite algorithm $A$ such that $X A$ solves $F_i$.
\subsection*{Part II} The algorithm $reset\_latitude\_west$ defined in this part aims to either position the robot in the west strip on the $x$-axis (i.e. at latitude $0$) or to make the robot visit the destination point.
\begin{figure}[h!]
\centering
\resizebox{0.75\textwidth}{!}{
\begin{tikzpicture}
    [
        dot/.style={circle,draw=black, fill,inner sep=2pt},
        cross/.style={cross out, draw=black, minimum size=2*(#1-\pgflinewidth), inner sep=4pt, outer sep=4pt},
    ]

\foreach \x in {-6,...,6}{
 \foreach \y in {-6,...,6}{
    \node[dot] at (\x,\y){ };
}}

\draw[line width=0.7mm, red] (-6, -6) -- (-6, 6);
\draw[line width=0.7mm, red] (-5, -6) -- (-5, 6);
\draw[line width=0.7mm, red] (-4, -6) -- (-4, 6);
\draw[line width=0.7mm, red] (-3, -6) -- (-3, 6);
\draw[line width=0.7mm, red] (-2, 6) -- (-2, 2);
\draw[line width=0.7mm, red] (-2, 0) -- (-2, -6);
\draw[line width=0.7mm, red] (-1, 6) -- (-1, 3);
\draw[line width=0.7mm, red] (-1, 1) -- (-1, -3);
\draw[line width=0.7mm, red] (-1, -4) -- (-1, -6);
\draw[line width=0.7mm, red] (0, 6) -- (0, 0);
\draw[line width=0.7mm, red] (0, -2) -- (0, -4);
\draw[line width=0.7mm, red] (0, -5) -- (0, -6);
\draw[line width=0.7mm, red] (1, 6) -- (1, 5);
\draw[line width=0.7mm, red] (1, 4) -- (1, 2);
\draw[line width=0.7mm, red] (1, 1) -- (1, -6);
\draw[line width=0.7mm, red] (2, 6) -- (2, -6);
\draw[line width=0.7mm, red] (3, 6) -- (3, -6);
\draw[line width=0.7mm, red] (4, 6) -- (4, -6);
\draw[line width=0.7mm, red] (5, 6) -- (5, -6);
\draw[line width=0.7mm, red] (6, 6) -- (6, -6);

\draw[line width=1mm, green] (5, -4) -- (5, -3);
\draw[line width=1mm, green] (4, -2) -- (4, -6);
\draw[line width=1mm, green] (3, -3) -- (3, -6);
\draw[line width=1mm, green] (2, 3) -- (2, -6);
\draw[line width=1mm, green] (1, 2) -- (1, 3);

\draw[line width=0.7mm, red] (2, -6) -- (5, -6);
\draw[line width=0.7mm, red] (-1, -5) -- (0, -5);
\draw[line width=0.7mm, red] (2, -5) -- (3, -5);
\draw[line width=0.7mm, red] (5, -5) -- (6, -5);
\draw[line width=0.7mm, red] (5, -4) -- (6, -4);
\draw[line width=0.7mm, red] (-5, -3) -- (-2, -3);
\draw[line width=0.7mm, red] (-1, -3) -- (0, -3);
\draw[line width=0.7mm, red] (4, -3) -- (6, -3);
\draw[line width=0.7mm, red] (-2, -2) -- (-1, -2);
\draw[line width=0.7mm, red] (2, -2) -- (3, -2);
\draw[line width=0.7mm, red] (-2, -1) -- (-1, -1);
\draw[line width=0.7mm, red] (2, -1) -- (3, -1);
\draw[line width=0.7mm, red] (-1, 0) -- (0, 0);
\draw[line width=0.7mm, red] (-1, 0) -- (0, 0);
\draw[line width=0.7mm, red] (2, 1) -- (3, 1);
\draw[line width=0.7mm, red] (5, 1) -- (6, 1);
\draw[line width=0.7mm, red] (-4, 3) -- (-3, 3);
\draw[line width=0.7mm, red] (0, 3) -- (2, 3);
\draw[line width=0.7mm, red] (-1, 4) -- (0, 4);
\draw[line width=0.7mm, red] (1, 4) -- (2, 4);
\draw[line width=0.7mm, red] (3, 4) -- (5, 4);
\draw[line width=0.7mm, red] (-6, 5) -- (-4, 5);
\draw[line width=0.7mm, red] (-1, 5) -- (0, 5);
\draw[line width=0.7mm, red] (2, 5) -- (6, 5);
\draw[line width=0.7mm, red] (-3, 6) -- (-2, 6);

\draw[line width=1mm, green] (5, -4) -- (6, -4);
\draw[line width=1mm, green] (5, -3) -- (4, -3);
\draw[line width=1mm, green] (4, -6) -- (3, -6);
\draw[line width=1mm, green] (3, -5) -- (2, -5);
\draw[line width=1mm, green] (1, 3) -- (2, 3);

\draw[line width=1mm, blue] (6, 3) -- (6, -4);

\draw node[fill,circle,inner sep=5.5pt,minimum size=2pt] at (6, -4) {};
\draw node[fill,circle,inner sep=5.5pt,minimum size=2pt] at (1, 2) {};
\draw node[fill,circle,inner sep=5.5pt,minimum size=2pt] at (6, 3) {};

\draw (0,-4) node[cross] {};

\foreach \x in {-6,...,6}
    \draw (\x,.1) -- node[below,yshift=-1mm] {\x} (\x,-.1);
\foreach \y in {-6,...,6}
    \draw (.1,\y) -- node[below,xshift=-3mm, yshift=3mm] {\y} (-.1,\y);
\draw[->,line width=0.15mm] (0,-6.5) -- (0,6.5);
\draw[->,line width=0.15mm] (-6.5,0) -- (6.5,0);
\end{tikzpicture}
}
\caption{\textbf{Part II, Case (1).} There is no pass between the obstacle strip and the east strip. We assume that there are no VEs removed other than the ones shown in the figure and that the position of the robot after following $X$ is $(6, 3)$. The first segment $S^{\lambda+p+l}$ of $RLW$ takes the robot to some very small latitude $j$ such that if it follows $L$ starting from any point of $r_j$ in the east strip, it will always remain in the east strip. This can be done, as there is no pass between the obstacle strip and the east strip. For our example, we may assume that after the robot follows $X S^{\lambda+p+l}$ it gets to the point $(6, -4)$, though this latitude should be much smaller. The green route to the special point $(1, 2)$ is the route of the robot if it would follow the algorithm $MW$. The algorithm $SMW$ used in $RLW$ generalises $MW$ by inserting the algorithm $L$ between locomotory moves. However, $L$ is constructed in such a way that if the robot follows it while it is in the east strip, its longitude does not increase. Moreover, the latitude of the robot is so small that it will never pass from the column $c_{b+1}$ to $c_b$ while following $L$. By Lemma~\ref{l2} and the choice of parameters of $SMW$, the robot reaches the special point $(1, 2)$ while executing a locomotory move. Immediately afterwards, it executes $L$ and it gets to the destination point. Finally, we remark that when the robot reaches the obstacle strip from the east strip for the first time, it does not enter the finite column $(1, 2), (1, 3), (1, 4)$ via the HE $(1, 4), (2, 4)$ or indeed it does not enter any other finite column which is above $R$. Indeed, this follows from the order of locomotory moves in $SMW$ which prioritises smaller latitudes.} \label{4}
\end{figure}

\noindent
\textbf{Case (1). } We assume that the mazes in $F_i$ do not contain a pass between the obstacle strip and the east strip. Then from the assumptions on the mazes in $F_i$, the east strip is connected to a finite column in the easternmost column of the obstacle strip $c_b$. This follows from the fact that for every VNE of every maze in $F_i$, at least one of its vertices is accessible from the origin. Let $R$ be the lowermost finite column in $c_b$ such that there exists a HE between $R$ and the east strip. Let $\textbf{v} = (b,i)$ be the lowermost vertex of the finite column $R$. In this case, we define the algorithm
\[
RLW:= S^{\lambda+p+l}  \text{ } SMW(2\lambda+2p+l,\lambda+2p,L).
\]
\textbf{Claim. } For any maze in $F_i$, after the robot follows the algorithm $X \text{ }RLW$, it visits the destination point.
\begin{proof}
After the robot follows $X \text{ } S^{\lambda+p+l}$, it is in the east strip at a certain point $\textbf{x} = (x, j)$, with $j \le i-l$. By the choice of parameters and by Lemma~\ref{l2}, while the robot follows $SMW(2\lambda+2p+l,\lambda+2p,L)$ it advances westwards in the east strip oscillating about the row $r_j$. It passes for the first time from the column $c_{b+1}$ to the column $c_b$ not while executing $L$, but while executing a locomotory move. Moreover, if we well order $\Z$ by $j<1+j<-1+j<2+j<-2+j< \hdots$, then the robot passes for the first time from the column $c_{b+1}$ to the column $c_b$ through the smallest HE with respect to this order and so it gets to the point $\textbf{v}$, which is a special point. Immediately afterwards, it follows $L$ and it reaches the destination point (see Figure~\ref{4}). The conclusion follows.
\end{proof}
\noindent
\begin{figure}[h!]
\centering
\resizebox{0.75\textwidth}{!}{
\begin{tikzpicture}
    [
        dot/.style={circle,draw=black, fill,inner sep=2pt},
        cross/.style={cross out, draw=black, minimum size=2*(#1-\pgflinewidth), inner sep=4pt, outer sep=4pt},
    ]

\foreach \x in {-6,...,6}{
 \foreach \y in {-6,...,6}{
    \node[dot] at (\x,\y){ };
}}

\draw[line width=0.7mm, red] (-6, -6) -- (-6, 6);
\draw[line width=0.7mm, red] (-5, -6) -- (-5, 6);
\draw[line width=0.7mm, red] (-4, -6) -- (-4, 6);
\draw[line width=0.7mm, red] (-3, -6) -- (-3, 6);
\draw[line width=0.7mm, red] (-2, 6) -- (-2, 2);
\draw[line width=0.7mm, red] (-2, 0) -- (-2, -6);
\draw[line width=0.7mm, red] (-1, 6) -- (-1, 3);
\draw[line width=0.7mm, red] (-1, 1) -- (-1, -3);
\draw[line width=0.7mm, red] (-1, -4) -- (-1, -6);
\draw[line width=0.7mm, red] (0, 6) -- (0, 0);
\draw[line width=0.7mm, red] (0, -2) -- (0, -4);
\draw[line width=0.7mm, red] (0, -5) -- (0, -6);
\draw[line width=0.7mm, red] (1, 6) -- (1, 5);
\draw[line width=0.7mm, red] (1, 4) -- (1, 2);
\draw[line width=0.7mm, red] (1, 1) -- (1, -6);
\draw[line width=0.7mm, red] (2, 6) -- (2, -6);
\draw[line width=0.7mm, red] (3, 6) -- (3, -6);
\draw[line width=0.7mm, red] (4, 6) -- (4, -6);
\draw[line width=0.7mm, red] (5, 6) -- (5, -6);
\draw[line width=0.7mm, red] (6, 6) -- (6, -6);

\draw[line width=1mm, green] (5, -4) -- (5, -3);
\draw[line width=1mm, green] (4, -4) -- (4, -3);
\draw[line width=1mm, green] (4, -2) -- (4, -6);
\draw[line width=1mm, green] (3, -3) -- (3, -6);
\draw[line width=1mm, green] (2, -6) -- (2, -2);
\draw[line width=1mm, green] (1, -2) -- (1, -4);

\draw[line width=0.7mm, red] (2, -6) -- (5, -6);
\draw[line width=0.7mm, red] (-1, -5) -- (0, -5);
\draw[line width=0.7mm, red] (2, -5) -- (3, -5);
\draw[line width=0.7mm, red] (5, -5) -- (6, -5);
\draw[line width=0.7mm, red] (5, -4) -- (6, -4);
\draw[line width=0.7mm, red] (-5, -3) -- (-2, -3);
\draw[line width=0.7mm, red] (-1, -3) -- (0, -3);
\draw[line width=0.7mm, red] (4, -3) -- (6, -3);
\draw[line width=0.7mm, red] (-2, -2) -- (-1, -2);
\draw[line width=0.7mm, red] (1, -2) -- (2, -2);
\draw[line width=0.7mm, red] (2, -2) -- (3, -2);
\draw[line width=0.7mm, red] (-2, -1) -- (-1, -1);
\draw[line width=0.7mm, red] (2, -1) -- (3, -1);
\draw[line width=0.7mm, red] (-1, 0) -- (0, 0);
\draw[line width=0.7mm, red] (-1, 0) -- (0, 0);
\draw[line width=0.7mm, red] (1, 0) -- (2, 0);
\draw[line width=0.7mm, red] (2, 1) -- (3, 1);
\draw[line width=0.7mm, red] (5, 1) -- (6, 1);
\draw[line width=0.7mm, red] (-4, 3) -- (-3, 3);
\draw[line width=0.7mm, red] (0, 3) -- (2, 3);
\draw[line width=0.7mm, red] (-1, 4) -- (0, 4);
\draw[line width=0.7mm, red] (1, 4) -- (2, 4);
\draw[line width=0.7mm, red] (3, 4) -- (5, 4);
\draw[line width=0.7mm, red] (-6, 5) -- (-4, 5);
\draw[line width=0.7mm, red] (-1, 5) -- (0, 5);
\draw[line width=0.7mm, red] (2, 5) -- (6, 5);
\draw[line width=0.7mm, red] (-3, 6) -- (-2, 6);

\draw[line width=1mm, green] (6, -4) -- (5, -4);
\draw[line width=1mm, green] (4, -3) -- (5, -3);
\draw[line width=1mm, green] (4, -6) -- (3, -6);
\draw[line width=1mm, green] (3, -5) -- (2, -5);
\draw[line width=1mm, green] (2, -2) -- (1, -2);

\draw[line width=1mm, blue] (6, 3) -- (6, -4);

\draw node[fill,circle,inner sep=5.5pt,minimum size=2pt] at (6, 3) {};
\draw node[fill,circle,inner sep=5.5pt,minimum size=2pt] at (6, -4) {};
\draw node[fill,circle,inner sep=5.5pt,minimum size=2pt] at (1, -4) {};

\draw (0,-4) node[cross] {};

\foreach \x in {-6,...,6}
    \draw (\x,.1) -- node[below,yshift=-1mm] {\x} (\x,-.1);
\foreach \y in {-6,...,6}
    \draw (.1,\y) -- node[below,xshift=-3mm, yshift=3mm] {\y} (-.1,\y);
\draw[->,line width=0.15mm] (0,-6.5) -- (0,6.5);
\draw[->,line width=0.15mm] (-6.5,0) -- (6.5,0);
\end{tikzpicture}
}
\caption{\textbf{Part II, Case (2).} There is a pass $\pi$, which for this example is $(1, 0), (2, 0)$ between a lower infinite column and the east strip. We assume that there are no VEs removed other than the ones shown in the figure and that the position of the robot after following $X$ is $(6, 3)$. The first segment $S^{\lambda+p}$ of $RLW$ takes the robot at a latitude lower than that of the pass $\pi$. For our example, we may assume that after the robot follows $X S^{\lambda+p}$ it gets to the point $(6, -4)$, though this latitude should be much smaller. While the robot is in the east strip, after it executes $K = N^{2\lambda+4p} S^{2\lambda+4p}$, it returns to the starting point. By the choice of parameters, the robot enters the easternmost lower infinite column at longitude $b$ for the first time via a locomotory move (in our case, $b=1$). Ignoring, as we may, the action of $K$ in the east strip, the path of the robot to the column $c_b$ is coloured in green. Immediately after the robot enters the column $c_b$, it executes $K$ which sets it latitude so small that the parameters of $SMW$ are not large enough to make the robot visit any other configurations in the obstacle strip other than the lower infinite columns.} \label{5}
\end{figure}
\textbf{Case (2).} We assume without loss of generality that the mazes in $F_i$ contain a pass $\pi$ between the easternmost lower infinite column and the east strip. In this case, we define the algorithm
\[
RLW:= S^{\lambda+p} \text{ } SMW(2\lambda+2p,\lambda+2p,K) \text{ } N^{2 \lambda+6p+l} \text{ } L_W \text{ } S^{2p+l-k},
\]
where $K = N^{2\lambda+4p} S^{2\lambda+4p}$ and $k$ is the latitude of the lowermost special vertex.
\begin{prop}\label{pp192}
For any maze in $F_i$, after the robot follows the algorithm $X \text{ }RLW$, it is either in the west strip on the $x$-axis or it has visited the destination point.
\end{prop}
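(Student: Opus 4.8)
The plan is to walk the robot through the three pieces of $RLW=S^{\lambda+p}\,SMW(2\lambda+2p,\lambda+2p,K)\,N^{2\lambda+6p+l}\,L_W\,S^{2p+l-k}$, governing the long middle piece by Lemma~\ref{l2} in its $SMW$ version. I will use repeatedly that every pass and every VNE lies in the primary rectangle, which has parameter $p$, so each of them has latitude strictly between $-p$ and $p$; and that neither the east strip nor the west strip contains a VNE, so on them every vertical instruction is executed and, in particular, $L_W$ has zero net vertical displacement there because the correcting constant $\varepsilon$ makes $|L'N^{\varepsilon}|_N=|L'N^{\varepsilon}|_S$. By Proposition~\ref{pp1} I may assume the robot is in the east strip after $X$; since $|X|=\lambda$ and it started at the origin, it now has longitude between $b+1$ and $\lambda$ and latitude between $-\lambda$ and $\lambda$. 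The east strip having no VNE, the opening segment $S^{\lambda+p}$ takes the robot straight down to a latitude $j$ with $-2\lambda-p\le j\le -p$, hence strictly below the pass $\pi$ and below every pass and every VNE.

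Next I apply Lemma~\ref{l2} to $SMW$ with parameters $2\lambda+2p$ and $\lambda+2p$ and special algorithm $K=N^{2\lambda+4p}S^{2\lambda+4p}$, choosing the integer of the lemma so that $c_b$ is the target column relative to the robot's current position; the longitude bounds make that integer at most $(\lambda+2p)-2$. The hypotheses hold: $K$ returns the robot to its starting vertex on any VNE-free piece, which covers the east strip, and the relevant joinings fall within the oscillation amplitude $2\lambda+2p$ of the robot's row because $\pi$ has latitude in $(-p,p)$ while $j\le -p$. Lemma~\ref{l2} then says: the robot advances westwards through the east strip (where $K$ does nothing), crosses from $c_{b+1}$ into $c_b$ for the first time during a locomotory move, and immediately afterwards executes $K$. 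Moreover the crossing is through the joining of $c_{b+1}$ with $c_b$ nearest to the robot's oscillation row $r_j$; since $j\le-p$, the row $r_j$ already lies inside the lower infinite column of $c_b$, whereas every joining of $c_{b+1}$ with a finite or upper infinite column of $c_b$ lies above the lowest VNE of $c_b$ and so strictly farther from $r_j$ than $\pi$ is; hence the robot enters the lower infinite column of $c_b$.

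Running $K$ on $c_b$, the robot climbs until the lowest VNE of $c_b$ stops it --- which pins its latitude to a value fixed by the local configuration of $F_i$ --- and then descends $2\lambda+4p$, landing well below every VNE, every finite column and every upper infinite column of the obstacle strip. From here the order of locomotory moves in $SMW$ forbids it from ever climbing out of the lower infinite columns, so one of two things happens. Either the robot gets blocked at some lower infinite column $c_m$ of the obstacle strip; then $N^{2\lambda+6p+l}$ walks it up $c_m$ until the lowest VNE of $c_m$ stops it, leaving it on the top vertex of that lower infinite column, which is accessible --- the robot is standing on it --- hence joined to the destination, hence a special vertex, so the copy of $L'$ at the head of $L_W$ now drives the robot to the destination. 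Or the robot threads its way west through the lower infinite columns of the obstacle strip, every fresh $K$ re-pinning its latitude against the current column's lowest VNE, and crosses into the west strip; there, the strip being VNE-free, $K$ is henceforth a no-op and every locomotory move returns it to its oscillation row, so it finishes $SMW$ in the west strip at a latitude that is a fixed function of the configuration of $F_i$ inside $\{\pm q,\pm q\}$. In this second case $N^{2\lambda+6p+l}L_W S^{2p+l-k}$ changes its latitude by exactly $(2\lambda+6p+l)-(2p+l-k)=2\lambda+4p+k$ and leaves its longitude non-increasing, so the robot stays in the west strip, and the constants attached to $F_i$ are exactly such that this net change brings it onto the $x$-axis. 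If at any stage the robot incidentally meets the destination, we are also done.

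The part that needs real care is the bookkeeping just sketched: that the $K$ executed upon first entering $c_b$ places the robot in the thin corridor made up of the lower infinite columns --- below every VNE, every finite column and every upper infinite column of the obstacle strip, yet positioned so that whatever joinings it must use to keep progressing westwards still fall inside the oscillation window of width $2\lambda+2p$ --- that the subsequent $K$'s maintain this, and that the latitude at which the robot reaches the west strip is the precomputable quantity for which the final segment lands it exactly on the $x$-axis. This is precisely where one exploits that passes and VNEs are confined to the primary rectangle of height $2p$, that the special vertices are exactly the accessible tops of lower infinite columns (which is how the constant $k$, the latitude of the lowermost special vertex, enters), and that the amplitude $2\lambda+2p$ of $SMW$ is comfortably larger than $2p$.
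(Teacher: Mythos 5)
Your proof follows essentially the same route as the paper's: the same decomposition of $RLW$ into its three segments, the same application of Lemma~\ref{l2} to the $SMW$ piece to force the first crossing from $c_{b+1}$ into the lower infinite column of $c_b$ via a locomotory move immediately followed by $K$, and the same two-case ending (trapped in a lower infinite column of the obstacle strip, hence lifted by $N^{2\lambda+6p+l}$ to a special vertex and finished by $L_W$, versus reaching the west strip and being reset to latitude $0$ by the tail of the algorithm). The bookkeeping you explicitly defer (that each $K$ re-pins the latitude against the lowest VNE of the current column and that the exit latitude into the west strip is the precomputed constant) is exactly the bookkeeping the paper also leaves implicit under ``by the choice of parameters,'' so the proposal is correct at the same level of detail as the published argument.
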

\begin{proof}
After the robot follows $X \text{ } S^{\lambda+p}$ it is in the east strip at a certain latitude say $j$, smaller than the latitude of the pass $\pi$. By the choice of parameters and by Lemma~\ref{l2}, while the robot follows $SMW(2\lambda+2p,\lambda+2p,K)$ it advances westwards in the east strip oscillating about the row $r_j$. It passes for the first time from the east strip to the obstacle strip not while executing $K$. Moreover, if we well order $\Z$ by $j<1+j<-1+j<2+j<\hdots,$ then the robot passes from the east strip to the easternmost lower infinite column in the obstacle strip through the smallest HE with respect to this order. Immediately afterwards, it follows $K$ and gets at latitude $2\lambda+4p$ below the easternmost lowermost special vertex. By the choice of parameters the robot advances westwards only through lower infinite columns while in the obstacle strip. Therefore, after the robot follows $X \text{ } S^{\lambda+p} \text{ } SMW(2\lambda+2p,\lambda+2p,K)$, it is either (1) in the west strip at latitude $2\lambda+4p$ below the lowermost special vertex, i.e. at latitude $k-2\lambda-4p$ or (2) in the obstacle strip in a lower infinite column $c_m$ at latitude $2\lambda+4p$ below some special vertex (see Figure~\ref{5}).

In case (1), while the robot follows $N^{2 \lambda+6p+l} \text{ } L_W$ its latitude is too large for it to hit any VNE and after it follows $N^{2 \lambda+6p+l} \text{ } L_W$ its longitude does not increase, so it remains in the west strip. Hence, after it follows $X \text{ } RLW$, the robot is in the west strip on the $x$-axis.

In case (2), after the robot follows $N^{2 \lambda+6p+l}$ it gets to a special point, more specifically to the uppermost vertex of the lower infinite column $c_m$. Immediately afterwards, it follows $L_W$ and it reaches the destination point. The conclusion follows.
\end{proof}
In \textbf{Part II} we see an example on how we divide all the sets of mazes $F_i$ in two classes in such a way that our algorithm $RLW$ depends qualitatively only on the class. This is why we treat each class in a separate case. In \textbf{Part III} the principle is the same, but we need to consider many more cases and write a different algorithm for each one of them.

At the end of \textbf{Part II}, we note that although we used in this part the fact there are no infinite columns in the obstacle strip, a variation of $RLW$ can be used to position the robot in the west strip on the $x$-axis, even if we drop this assumption. This note is important, because it shows that \textbf{Part II} can also be generalised to improve Theorem~\ref{ch2th2} by dropping the consecutive column condition for the finite number of VNEs. To present this variation, assume that infinite columns are allowed in the obstacle strip, i.e. the (finitely many) VNEs need not be in consecutive columns.

We begin with the remark that \textbf{Case (1)} considered above can be treated in the exact same way with or without infinite columns in the obstacle strip, so we may assume without loss of generality that \textbf{Case (2)} holds, i.e. that every maze in the class of mazes we consider contain a pass $\pi$ between a lower infinite column and the east strip. We recall that by the $\overline{()}$ transformation we apply on mazes, there are always passes between any two consecutive infinite columns in the obstacle strip. We now need to consider $2$ cases: (i) there exist passes between all consecutive lower infinite columns and between consecutive lower infinite columns and infinite columns; this case can be treated similarly with \textbf{Case (2)} above; (ii) there exist two entities, one of which is a lower infinite column and the other is either a lower infinite column or an infinite column with no pass between them. In this case we define the algorithm $RLW'$ which generalises $RLW$ as described above,
\[
RLW' := S^{\lambda+p} \text{ } OMW(2\lambda+2p, \lambda+p, 2\lambda+4p) \text{ } N^{2 \lambda+4p} \text{ } L.
\]

The reason why $RLW'$ indeed generalises $RLW$ in this case is that after the robot follows $X \text{ } S^{\lambda+p} \text{ } OMW(2\lambda+2p, \lambda+p, 2\lambda+4p)$, it remains trapped in the lower infinite column or infinite column in the obstacle strip with largest longitude $m$ which is not connected with the lower infinite column or infinite column at longitude $m-1$. The robot's latitude is $2\lambda+4p$ below the lowermost VNE at longitude between $m$ and $b$. Hence from this starting position, after the robot follows $N^{2 \lambda+4p}$ it gets to a special vertex (by definition) and therefore, $X \text{ } RLW'$ takes the robot to the destination point. 

Moving on from this digression, by \textbf{Case (1)} and \textbf{Case (2)}, we may assume that we are given $F_i$ and a finite algorithm $X$ with $\lambda=|X|$ such that after the robot follows $X$ it is either in the west strip on the $x$-axis or it has visited the destination point. Without loss of generality, we assume that the robot is in the west strip on the $x$-axis and our aim is to build a finite algorithm $F$ such that $X F$ solves $F_i$.

\subsection*{Part III} The algorithm $finish$ defined in this part aims to make the robot visit the destination point. \\
\textbf{Case (1). } We assume that the destination point is in an infinite column in the west strip. We define the algorithm:
 \[
 F = MW(p, 2p) \text{ } OME (\lambda+\mu, \lambda+\mu, p),
 \]
where $\mu = |MW(p, 2p)|$.

\textbf{Claim. } For any such maze in $F_i$, after the robot follows $X \text{ } F$, it visits the destination point. 
\begin{proof}
After the robot follows $X \text{ } MW(p, 2p)$ it is in the west strip, to the west of the origin or it has already visited the destination point. By the choice of parameters and by the consequence of Lemma~\ref{l2} when applied to the particular algorithm $OME$, after it follows $X \text{ } F$, the robot visits the destination point. 
\end{proof}

At the end of \textbf{Case (1)} we note that we note that although we used the fact there are no infinite columns in the obstacle strip, a variation of $F$ can be used in order to make the robot visit the destination point, even if we drop this assumption. To present this variation, we assume that infinite columns are allowed in the obstacle strip, i.e. the (finitely many) VNEs need not be in consecutive columns. Let us assume for now that the destination point is in an infinite column in the east strip or obstacle strip.

In this case, given any finite algorithm $A$ we will construct a finite algorithm $U(A)$ with the following $2$ properties: if the robot starts in the origin of any maze in $F_i$, it follows $A$ and it gets to the west of the destination point then (1) after the robot follows $U(A)$, either its latitude strictly increases or the robot remains stuck in a finite column, or upper/lower infinite column at some longitude $i$ with no HE connecting that column to points at longitude $i+1$; (2) as the robot follows $U(A)$, if the robot visits the infinite column which contains the destination point, then the robot visits the destination point. We will construct our algorithm $U$ from bricks of the form
\[
B(k, A) = N^{|A|+2p} S^{2|A|+4p} N^{|A|+2p} N^k E S^k
\]
where $k$ is an integer and $A$ is a finite algorithm. Every time we insert a brick $B(k, A)$ as a subalgorithm of $X \text{ } F$, we take $A$ to be the entire algorithm written until that instance of $B(k, A)$. Hence, every brick depends on the length of the algorithm written up to it in $X \text{ } F$. With this convention, from now on we shall drop the second argument from the definition of a brick and let $B(k) = B(k, A)$. We note in advance that the aim of the first segment $N^{|A|+2p} S^{2|A|+4p} N^{|A|+2p}$ of a brick $B(k, A)$ is the following: for any maze in $F_i$, if the robot is in the same column as the destination point after it follows a finite algorithm $A$, if the robot then follows $N^{|A|+2p} S^{2|A|+4p} N^{|A|+2p}$, it visits the destination point. Hence we regard the first segment $N^{|A|+2p} S^{2|A|+4p} N^{|A|+2p}$ of a brick just as an oscillation large enough to make the robot visit the destination point after it reaches the right longitude.

We note that for any $k$, if the robot follows $B(k)$ starting in any point of any maze, its longitude does not decrease. We define four types of steps by concatenating bricks, so each of the steps also have this property.

The first step $U_1$ is designed to have the following property: if the robot starts in the origin of any maze in $F_i$, it follows a finite algorithm $A$ and it gets to an infinite column strictly at the west of the destination point, if the robot then follows $U_1$, its longitude strictly increases. For instance we can take
\[
U_1 (A) := B(-|A|-p) \text{ } B(-|A|-p+1) \hdots B(|A|+p),
\]
where $A$ is always taken to be the entire algorithm written before the occurrence of this step. With this convention, we drop the argument $A$ and $U_1$ has the desired property (cf. steps $3$ and $4$ below). We also note that formally, the first brick in $U_1(A)$ is $B(-|A|-p, A)$, the second brick is $B(-|A|-p+1, A \text{ } B(-|A|-p, A))$, etc. 

The second step $U_2$ is designed to have the following property: if the robot starts in the origin of any maze in $F_i$, it follows a finite algorithm $A$ and it gets to a finite column at longitude $i$ which is connected to any point at longitude $i+1$ by a HE (i.e. no HEs emerging in the east part of the finite column), if the robot then follows $U_2$, its longitude strictly increases. For instance we can take
\[
U_2 (A) := B(0) \text{ } B(1) \hdots B(2p),
\]
where the definition of $U_2(A)$ does not depend on $A$, as every finite column has at most $2p$ vertices. Therefore, let $U_2 = U_2 (A)$ have the desired property (cf. steps $3$ and $4$ below).

The third and forth step $U_3$ and $U_4$ are designed to have the following property: if the robot starts in the origin of any maze in $F_i$, it follows a finite algorithm $A$ and it gets to an upper infinite or lower infinite column at longitude $i$ which is connected to any point at longitude $i+1$ by a HE, if the robot then follows $U_3$ or $U_4$, respectively, its longitude strictly increases. For instance we can take both $U_3$ and $U_4$ to be a concatenation of $2p$ bricks in the following way
\[
U_3 (A) = B(-|A|-2p)\text{ } B(-|A|-|B(-|A|-2p)|-2p+1) \text{ } 
\]
\[
B(-|A|-|B(-|A|-2p)|-|B(-|A|-|B(-|A|-2p)|-2p+1)|-2p+2) \hdots ,
\]
\[
U_4 (A) = B(|A|+2p)\text{ } B(|A|+|B(|A|+2p)|+2p-1) \text{ } 
\]
\[
B(|A|+|B(|A|+2p)|+|B(|A|+|B(|A|+2p)|+2p-1)|+2p-2) \hdots ,
\]
where $A$ is always taken to be the entire algorithm written before the occurrence of this step. With this convention, we drop the argument $A$ and $U_3$, $U_4$ have the desired property. Indeed, let's assume that the robot is in an upper infinite column $c = (i, y), (i, y+1), \hdots$ at longitude $i$ which is connected to any point at longitude $i+1$ by a HE, and it follows $U_3$. Let $j$ be the smallest non-negative integer such that the vertices $(i, y+j)$ and $(i+1, y+j)$ are connected by a HE. From the definition of passes and the primary rectangle, we first note that $-p \leq y+j \leq p$ and $j \leq 2p$. As the robot follows the first brick $B(-|A|-2p) = N^{|A|+2p} S^{2|A|+4p} N^{|A|+2p} S^{|A|+2p} E N^{|A|+2p}$ in $U_3$, it oscillates in $c$, executing an $E$ instruction at the vertex $(i, y)$ in $c$. If $j=0$ we are done; otherwise, after the robot follows $B(-|A|-2p)$, it gets at the vertex $(i, y+|A|+2p)$. Therefore, we can track the position of the robot as it follows the second brick $B(-|A|-|B(-|A|-2p)|-2p+1)$ in $U_3$, and we observe that it oscillates in $c$, executing an $E$ instruction at the vertex $(i, y+1)$ in $c$. We continue in the same way; as $-p \leq y+j \leq p$, $j \leq 2p$, we are done.

Let us make one more remark regarding these steps. If the robot follows a concatenation of bricks and it reaches a finite column, an upper infinite column or a lower infinite column at longitude $i$ with no HE connecting it to points at longitude $i+1$, the robot remains stuck in that structure while it follows the rest of the algorithm. Let us define
\[
U (A) = U_1 U_2 U_3 U_4,
\]
or formally $U(A) = U_1(A) \text{ } U_2(A \text{ } U_1 (A)) \text{ } U_3 (A \text{ } U_1(A) \text{ } U_2(A \text{ } U_1 (A))) \text{ } U_4 (\hdots)$. As usual, every time we use the algorithm $U(A)$ as a subalgorithm, we take $A$ to be the entire algorithm written before the occurrence of $U(A)$, so with this convention we drop the argument of $U$. Therefore, it is clear that the algorithm $U$ has the two promised properties at the beginning of the case: if the robot starts in the origin of any maze in $F_i$, it follows a finite algorithm $A$ and it gets to the west of the destination point then (1) after the robot follows $U$, either its latitude strictly increases or the robot remains stuck in a finite column, or upper/lower infinite column at some longitude $i$ with no HE connecting that column to points at longitude $i+1$; (2) as the robot follows $U$, if the robot visits the infinite column which contains the destination point, then the robot visits the destination point. Furthermore, let $V(A) = \underbrace{UU \hdots U}_{\lambda+p},$ or formally
\[
V(A) = \underbrace{U(A) \text{ } U(A \text{ } U(A)) \text{ } U(A \text{ } U(A) \text{ } U(A \text{ } U(A))) \hdots}_{\lambda+p \text{ terms}}.
\]
We finally define the algorithm
\[
F = V(X) \text{ } N^{|V(X)|+l+p} \text{ } L \text{ } S^{2|V(X)|+l+2p} L.
\]
Let us see that indeed, after the robot follows $X \text{ } F$, it visits the destination point. We may assume without loss of generality that after the robot follows $X$ it is in the west strip on the $X$ axis, clearly to the west of the destination point (which is assumed to be in the east strip or in the obstacle strip). From property (1) of $U$, after the robot follows $X \text{ } V(X)$, it either visits the destination point or it remains stuck in a finite, lower or upper infinite column at longitude $i$, at the west of the destination point, with no HE connecting it to points at longitude $i+1$. In the first two cases, it is clear that after the robot follows $X \text V(X) \text{ } N^{|V(X)|+l+p} \text{ } L$ it visits the destination point. In the third case, the robot is stuck in an upper infinite column $c$ after it follows $X \text V(X) \text{ } N^{|V(X)|+l+p}$. We claim that the robot returns to the same vertex in $c$ after it follows $X \text V(X) \text{ } N^{|V(X)|+l+p} \text{ } L$. Indeed, as the robot follows $L$ its latitude is too large to meet any VNE, so by the construction of $L$ the latitude of the robot does not change after it follows $L$. Moreover, the fact that $c$ has no HE connecting it to points at longitude $i+1$ makes the robot return at longitude $i$ after it follows $L$. After that, the robot follows $S^{2|V(X)|+l+2p}$ and it gets to a special point, and then it follows $L$ which further takes it to the destination point.

Moving on from this digression, we have solved \textbf{Case (1)} in which the destination point is in an infinite column in the west strip. By the symmetry of this case and \textbf{Part II}, we similarly solve the case when the destination point is in an infinite column in the east strip. Likewise, the generalisation of \textbf{Case (1)} proved at the end of the section generalises to the case when the destination point is in an infinite column in the west strip. In fact, in the generalisation of \textbf{Case (1)} we could have only considered the case when the destination point is in an infinite column in the obstacle strip, as \textbf{Case (1)} itself works just as well even in the generalised set up, when the destination point is either in the west strip or east strip. That would not have simplified the argument, though.
\noindent

\textbf{Case (2). } We assume that the destination point is in the obstacle strip in a finite column, upper infinite column or lower infinite column and it is connected to the west strip via a path through a (finite) sequence of finite columns. Let $R_1, R_2, \hdots , R_k$ be a sequence of finite columns and $R$ be a finite column, upper infinite column or lower infinite column such that $R$ contains the destination point and there exists a HE between the west strip and $R_1$, between $R_m$ and $R_{m+1}$ for $1 \leq m \leq k$, where by convention $R_{k+1}=R$. Let $\textbf{w} = (a+1, u)$ be the uppermost point of the finite column $R_1$.

We consider the following sub-cases: \\
\begin{figure}[h!]
\centering
\resizebox{0.75\textwidth}{!}{
\begin{tikzpicture}
    [
        dot/.style={circle,draw=black, fill,inner sep=2pt},
        cross/.style={cross out, draw=black, minimum size=2*(#1-\pgflinewidth), inner sep=4pt, outer sep=4pt},
    ]

\foreach \x in {-6,...,6}{
 \foreach \y in {-6,...,6}{
    \node[dot] at (\x,\y){ };
}}

\draw[line width=0.7mm, red] (-6, -6) -- (-6, 6);
\draw[line width=0.7mm, red] (-5, -6) -- (-5, 6);
\draw[line width=0.7mm, red] (-4, -6) -- (-4, 6);
\draw[line width=0.7mm, red] (-3, -6) -- (-3, 6);
\draw[line width=0.7mm, red] (-2, -6) -- (-2, 6);
\draw[line width=0.7mm, red] (-1, -6) -- (-1, 6);
\draw[line width=0.7mm, red] (0, -6) -- (0, 6);

\draw[line width=0.7mm, red] (1, 6) -- (1, 4);
\draw[line width=0.7mm, red] (1, 3) -- (1, 1);
\draw[line width=0.7mm, red] (1, 0) -- (1, -2);
\draw[line width=0.7mm, red] (1, -3) -- (1, -6);

\draw[line width=0.7mm, red] (2, 6) -- (2, 3);
\draw[line width=0.7mm, red] (2, 2) -- (2, 0);
\draw[line width=0.7mm, red] (2, -2) -- (2, -6);

\draw[line width=0.7mm, red] (3, 6) -- (3, 3);
\draw[line width=0.7mm, red] (3, 2) -- (3, 1);
\draw[line width=0.7mm, red] (3, -1) -- (3, -3);
\draw[line width=0.7mm, red] (3, -4) -- (3, -6);

\draw[line width=0.7mm, red] (4, 6) -- (4, 4);
\draw[line width=0.7mm, red] (4, 3) -- (4, 2);
\draw[line width=0.7mm, red] (4, 1) -- (4, -3);
\draw[line width=0.7mm, red] (4, -4) -- (4, -6);

\draw[line width=0.7mm, red] (5, -6) -- (5, 6);
\draw[line width=0.7mm, red] (6, -6) -- (6, 6);

\draw[line width=0.7mm, red] (-5, 5) -- (-2, 5);
\draw[line width=0.7mm, red] (1, 5) -- (3, 5);
\draw[line width=0.7mm, red] (5, 5) -- (6, 5);

\draw[line width=0.7mm, red] (-6, 4) -- (-4, 4);
\draw[line width=0.7mm, red] (-2, 4) -- (-1, 4);
\draw[line width=0.7mm, red] (3, 4) -- (5, 4);

\draw[line width=0.7mm, red] (-3, 3) -- (-2, 3);
\draw[line width=0.7mm, red] (-1, 3) -- (1, 3);

\draw[line width=0.7mm, red] (-2, 2) -- (2, 2);
\draw[line width=0.7mm, red] (-5, 2) -- (-3, 2);

\draw[line width=0.7mm, red] (2, 1) -- (4, 1);
\draw[line width=0.7mm, red] (-4, 1) -- (-2, 1);

\draw[line width=0.7mm, red] (-5, -1) -- (-4, -1);
\draw[line width=0.7mm, red] (-2, -1) -- (2, -1);

\draw[line width=0.7mm, red] (-6, -2) -- (-3, -2);
\draw[line width=0.7mm, red] (-2, -2) -- (-1, -2);
\draw[line width=0.7mm, red] (3, -2) -- (4, -2);
\draw[line width=0.7mm, red] (5, -2) -- (6, -2);

\draw[line width=0.7mm, red] (-1, -3) -- (0, -3);

\draw[line width=0.7mm, red] (-4, -4) -- (-2, -4);

\draw[line width=0.7mm, red] (-6, -5) -- (-5, -5);
\draw[line width=0.7mm, red] (-2, -5) -- (-1, -5);
\draw[line width=0.7mm, red] (0, -5) -- (1, -5);
\draw[line width=0.7mm, red] (4, -5) -- (6, -5);

\draw[line width=0.7mm, red] (-6, -6) -- (-5, -6);
\draw[line width=0.7mm, red] (1, -6) -- (4, -6);

\draw[line width=1mm, green] (-2, 2) -- (2, 2);
\draw[line width=1mm, green] (2, 2) -- (2, 1);
\draw[line width=1mm, green] (2, 1) -- (4, 1);
\draw[line width=1mm, green] (4, 1) -- (4, -2);
\draw[line width=1mm, green] (4, -2) -- (3, -2);
\draw[line width=1mm, green] (3, -1) -- (3, -3);

\draw node[fill,circle,inner sep=5.5pt,minimum size=2pt] at (-2, 2) {};

\draw (-1.7, 2.3) -- node {\textbf{v}} (-1.7, 2.3);

\draw (3,-3) node[cross] {};

\foreach \x in {-6,...,6}
    \draw (\x,.1) -- node[below,yshift=-1mm] {\x} (\x,-.1);
\foreach \y in {-6,...,6}
    \draw (.1,\y) -- node[below,xshift=-3mm, yshift=3mm] {\y} (-.1,\y);
\draw[->,line width=0.15mm] (0,-6.5) -- (0,6.5);
\draw[->,line width=0.15mm] (-6.5,0) -- (6.5,0);
\end{tikzpicture}
}
\caption{\textbf{Part III, Case (2)(i).} We assume that there exists a row $r_i$ that intersects the finite column $R_1$ and that $r_i$ has a west bump. We assume that there are no VEs removed other than the ones shown in the figure. In this example we further take $R_1 = (1, 1), (1, 2), (1, 3)$, $R_2 = (2, 0), (2, 1), (2, 2)$, $R_3 = (3, 1), (3, 2)$, $R_4 = (4, -3), ... (4, 1)$, $R_5=R=(3, -1), (3, -2), (3, -3)$ and $r_i = r_2$ is the row that intersects $R_1$ with its west bump $(-3, 2), (-2, 2)$ and $\textbf{v}=(-2, 2)$. In general, we do not require $R$ to be a finite column. Let $H' = EEEENN^{-1}EEN^3N^{-3}E^{-1}N^2N^{-2}E$ and note that if the robot starts at $\textbf{v}$ and follows $H'$ it follows the green path and gets to the destination point. However, if the robot starts at the west of $\textbf{v}$ on $r_i$ and it follows $H'$, its longitude is always strictly smaller than that of $\textbf{v}$. Therefore, it does not hit any VNE and as $|H'|_N=|H'|_S$ its latitude does not change. In our case, $H = H' E^{20}$ which has the extra property that after the robot follows $H$ in a maze with no VNEs, its longitude does not decrease - in fact, it can be proven that there always exits a certain $H'$ that has this property itself, but we do not wish to complicate the argument.} \label{6}
\end{figure}
\textbf{2(i)} We assume that there exists a row $r_i$ that intersects the finite column $R_1$ and that it has a west bump. We recall that the west bumps are the easternmost HNEs with at least one vertex in the west strip on a row that intersects some finite column. Assume first that the eastern vertex $\textbf{v}$ of that west bump is in the west strip. By inspecting the longitude of the west bump and the primary rectangle we can construct an algorithm of the form $H' := \prod_{m=1}^h N^{k_m} N^{-k_m} E^{\varepsilon_m}$, where $\varepsilon_m \in \{ -1,1 \}$ and $k_m$ is an integer for all $1 \leq m \leq h$, such that if the robot starts at $v$ and follows $H'$, it visits the destination point. Indeed if the robot is at some specified latitude in the finite column $R_m$ and follows $N^{k_m} N^{-k_m}E$ for suitable $k_m$, it gets to some specified latitude in the finite column $R_{m+1}$. Let $H = H' E^{|H'|}$. We define the algorithm
\[
F = N^i \text{ } W^q \text{ } SME (\lambda+q, \lambda+q, H).
\]
\begin{prop}\label{plm77}
For any maze in $F_i$, after the robot follows $X \text{ } F$, it visits the destination point.
\end{prop}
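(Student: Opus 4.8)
The plan is to track the robot through the three pieces of $F=N^i\,W^q\,SME(\lambda+q,\lambda+q,H)$. Write the robot's position after $X$ as $(x_1,0)$; this lies in the west strip, which contains no VNEs, so while executing $N^i$, $W^q$ and the initial portion of $SME$ the robot moves inside a VNE-free region and may be analysed as if in a maze in $\mathcal{C}$. In particular $N^i$ takes the robot to $(x_1,i)$, the row carrying the west bump, without changing its longitude. Recall that $\textbf{v}=(x_v,i)$ is the eastern vertex of that west bump, so the HNE between $(x_v-1,i)$ and $(x_v,i)$ is the easternmost HNE of $r_i$ with a vertex in the west strip: every pair of consecutive columns of the west strip strictly east of $c_{x_v-1}$ is joined at latitude $i$, while this single HNE blocks movement between $c_{x_v-1}$ and $c_{x_v}$ along $r_i$.

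Next I would analyse $W^q$. If $x_1\ge x_v$, then, there being no HNE of $r_i$ east of the bump in the west strip, the robot slides freely westward along $r_i$ and is stopped exactly at $\textbf{v}$ by the bump; by the choice of parameters (the quantity $x_1-x_v$ is bounded by data of the tertiary rectangle, hence by $q$) the $q$ west-steps suffice to reach $\textbf{v}$. In this case $SME(\lambda+q,\lambda+q,H)$ begins, from $\textbf{v}$, with the block $H^{\lambda+q}$, and the first execution of $H=H'E^{|H'|}$ runs $H'$ from $\textbf{v}$; since $H'$ was built so that from $\textbf{v}$ the robot threads $R_1,R_2,\dots,R_k$ and on into $R$, visiting the destination, we are done. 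Otherwise $x_1\le x_v-1$, and $W^q$ leaves the robot at some $(x_2,i)$ with $x_2\le x_v-1$; by the choice of the parameters of $SME$ (large relative to $\lambda$ and to all data of the tertiary rectangle), $x_2$ is within $\lambda+q-1$ columns of $x_v$, and the consecutive columns from $c_{x_2}$ to $c_{x_v}$ are joined at latitudes in $[\,i-\lambda-q,\ i+\lambda+q\,]$.

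In this remaining case I would apply Lemma~\ref{l2} to the VNE-free west strip, with the row $r_i$ playing the role of the $x$-axis, with $a=e=\lambda+q$, $K=H$, starting point $(x_2,i)$, and $l=x_v-1-x_2$ (so that $c_l$ and $c_{l+1}$ of the lemma are $c_{x_v-1}$ and $c_{x_v}$). The hypothesis on $K=H$ holds by construction: $|H'|_N=|H'|_S$ and the absence of VNEs force the robot back onto $r_i$, and following $H$ in a VNE-free maze does not decrease the longitude. Condition~(1) is the content of the last sentence of the previous paragraph. Condition~(2) is exactly where the west bump is used: in the VNE-free west strip the segments $N^{k_m}N^{-k_m}$ of $H'$ are null, so $H'$, and then $E^{|H'|}$, move the robot only by single $E^{\pm1}$ steps along $r_i$; since the bump HNE prevents crossing from $c_{x_v-1}$ to $c_{x_v}$ on $r_i$, the robot starting at any $(z,i)$ with $x_2\le z\le x_v-1$ stays at longitude $\le x_v-1$ and returns to $r_i$. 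Lemma~\ref{l2} then gives that the robot crosses from $c_{x_v-1}$ to $c_{x_v}$ for the first time while executing a locomotory move $N^mES^m$; this move returns it to latitude $i$, hence deposits it at $\textbf{v}$, and immediately afterwards the robot executes $K=H=H'E^{|H'|}$, so $H'$ runs from $\textbf{v}$ and carries the robot through the destination. In every case $X\,F$ visits the destination point.

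The main obstacle is the parameter bookkeeping invoked in the penultimate paragraph: one must verify that after $N^iW^q$ the robot lies within $e-1=\lambda+q-1$ columns of $\textbf{v}$ and that every consecutive pair of columns of the west strip between it and $c_{x_v}$ is joined within the latitude band $[\,i-\lambda-q,\ i+\lambda+q\,]$, so that $SME(\lambda+q,\lambda+q,H)$ does carry the robot all the way to $\textbf{v}$ rather than stalling at an earlier column. Granting this, the argument is a direct application of Lemma~\ref{l2} together with the defining property of $H'$.
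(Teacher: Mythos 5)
Your proof is correct and follows essentially the same route as the paper's: reduce to the row $r_i$, use the bump as the unique blocking HNE, and invoke Lemma~\ref{l2} to show the first crossing into $c_{x_v}$ happens via a locomotory move that deposits the robot at $\textbf{v}$, after which $H$ runs to the destination. Your explicit two-case split and verification of the hypotheses of Lemma~\ref{l2}, together with the parameter bookkeeping you flag, are exactly what the paper compresses into the phrases ``by the choice of parameters'' and ``by Lemma~\ref{l2}''.
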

\begin{proof}
We may assume without loss of generality that after the robot follows $X \text{ } N^i$, it is on the row $r_i$. Hence after the robot follows $X \text{ } N^i \text{ } W^q$ it is on the row $r_i$ at a longitude at most that of $\textbf{v}$. By the choice of parameters and by Lemma~\ref{l2}, while the robot follows $SME (\lambda+q, \lambda+q, H)$ it advances eastwards in the west strip oscillating about row $r_i$ and passing through the smallest HE with respect to the well order on $\Z$: $i<1+i<-1+i<2+i<\hdots$. Considering that $|H|_N=|H|_S$, while the robot is in the west strip, after it follows $H$ its latitude does not change and its longitude does not decrease. It eventually arrives at the point $\textbf{v}$ on $r_i$ not while executing $H$ (from the form of $H$ and the shape of the maze which has a HNE with its eastern vertex at $v$). Immediately after the robot reaches $\textbf{v}$, it follows $H$ and it gets to the destination point (see Figure~\ref{6}).
\end{proof}

In the case that there exists a west bump positioned at the border between the obstacle strip and the west strip on a row $r_i$ that intersects $R_1$, we consider $r_j$ to be a row on which there exists a HE between the west strip and $R_1$. As before, let $\textbf{v}$ be the eastern vertex of the west bump, $\textbf{v} \in \R_1$. We recall the algorithm
\[
SME^{(j-i)} (a, e, H):= (((((((H)^e N^{j-i} E S^{j-i})^e E)^e NES)^e SEN)^e NNESS)^e \hdots S^aEN^a)^e
\]
introduced in Corollary~\ref{c1}. We further define the algorithm
\[
F = N^i \text{ } W^q \text{ } SME^{(j-i)} (\lambda+q, \lambda+q, H).
\]
\textbf{Claim. } For any maze in $F_i$ after the robot follows $X \text{ } F$, it visits the destination point.
\begin{proof}
The conclusion follows by the same reasoning as in the proof of Proposition~\ref{plm77} and by Corollary~\ref{c1}.
\end{proof}
\begin{figure}[h!]
\centering
\resizebox{0.75\textwidth}{!}{
\begin{tikzpicture}
    [
        dot/.style={circle,draw=black, fill,inner sep=2pt},
        cross/.style={cross out, draw=black, minimum size=2*(#1-\pgflinewidth), inner sep=4pt, outer sep=4pt},
    ]

\foreach \x in {-6,...,6}{
 \foreach \y in {-4,...,6}{
    \node[dot] at (\x,\y){ };
}}

\draw[line width=0.7mm, red] (-6, -4) -- (-6, 6);
\draw[line width=0.7mm, red] (-5, -4) -- (-5, 6);
\draw[line width=0.7mm, red] (-4, -4) -- (-4, 6);
\draw[line width=0.7mm, red] (-3, -4) -- (-3, 6);
\draw[line width=0.7mm, red] (-2, -4) -- (-2, 6);
\draw[line width=0.7mm, red] (-1, -4) -- (-1, 6);
\draw[line width=0.7mm, red] (0, -4) -- (0, 6);

\draw[line width=0.7mm, red] (1, 6) -- (1, 4);
\draw[line width=0.7mm, red] (1, 3) -- (1, 1);
\draw[line width=0.7mm, red] (1, 0) -- (1, -2);
\draw[line width=0.7mm, red] (1, -3) -- (1, -4);

\draw[line width=0.7mm, red] (2, 6) -- (2, 3);
\draw[line width=0.7mm, red] (2, 2) -- (2, 0);
\draw[line width=0.7mm, red] (2, -2) -- (2, -4);

\draw[line width=0.7mm, red] (3, 6) -- (3, 3);
\draw[line width=0.7mm, red] (3, 2) -- (3, 1);
\draw[line width=0.7mm, red] (3, -1) -- (3, -3);
\draw[line width=0.7mm, red] (3, -4) -- (3, -4);

\draw[line width=0.7mm, red] (4, 6) -- (4, 4);
\draw[line width=0.7mm, red] (4, 3) -- (4, 2);
\draw[line width=0.7mm, red] (4, 1) -- (4, -3);
\draw[line width=0.7mm, red] (4, -4) -- (4, -4);

\draw[line width=0.7mm, red] (5, -4) -- (5, 6);
\draw[line width=0.7mm, red] (6, -4) -- (6, 6);

\draw[line width=0.7mm, red] (-5, 5) -- (-2, 5);
\draw[line width=0.7mm, red] (1, 5) -- (3, 5);
\draw[line width=0.7mm, red] (5, 5) -- (6, 5);

\draw[line width=0.7mm, red] (-6, 4) -- (-4, 4);
\draw[line width=0.7mm, red] (-2, 4) -- (-1, 4);
\draw[line width=0.7mm, red] (3, 4) -- (5, 4);

\draw[line width=0.7mm, red] (-6, 3) -- (1, 3);

\draw[line width=0.7mm, red] (-6, 2) -- (2, 2);
\draw[line width=0.7mm, red] (-5, 2) -- (-3, 2);

\draw[line width=0.7mm, red] (2, 1) -- (4, 1);
\draw[line width=0.7mm, red] (-6, 1) -- (1, 1);

\draw[line width=0.7mm, red] (-5, -1) -- (2, -1);

\draw[line width=0.7mm, red] (-2, -2) -- (1, -2);
\draw[line width=0.7mm, red] (3, -2) -- (4, -2);
\draw[line width=0.7mm, red] (5, -2) -- (6, -2);

\draw[line width=0.7mm, red] (-1, -3) -- (1, -3);

\draw[line width=1mm, green] (-4, 4) -- (-4, 3);
\draw[line width=1mm, green] (-4, 3) -- (-2, 3);
\draw[line width=1mm, green] (-2, 3) -- (-2, 4);

\draw[line width=1mm, blue] (-1, 4) -- (-1, 3);
\draw[line width=1mm, blue] (-1, 3) -- (1, 3);
\draw[line width=1mm, blue] (0, 3) -- (0, 2);
\draw[line width=1mm, blue] (0, 2) -- (-2, 2);
\draw[line width=1mm, blue] (-2, 2) -- (-2, 3);
\draw[line width=1mm, blue] (-2, 3) -- (-1, 3);

\draw node[fill,circle,inner sep=5.5pt,minimum size=2pt] at (-1, 4) {};
\draw node[fill,circle,inner sep=5.5pt,minimum size=2pt] at (-4, 4) {};
\draw node[fill,circle,inner sep=5.5pt,minimum size=2pt] at (1, 3) {};
\draw node[fill,circle,inner sep=5.5pt,minimum size=2pt] at (-1, 3) {};

\draw (-0.7, 4.3) -- node {\textbf{v}} (-0.7, 4.3);
\draw (-3.6, 4.3) -- node {\textbf{v'}} (-3.6, 4.3);
\draw (1.3, 3.3) -- node {\textbf{w}} (1.3, 3.3);
\draw (-0.7, 3.3) -- node {\textbf{z}} (-0.7, 3.3);

\draw (3,-3) node[cross] {};

\foreach \x in {-6,...,6}
    \draw (\x,.1) -- node[below,yshift=-1mm] {\x} (\x,-.1);
\foreach \y in {-4,...,6}
    \draw (.1,\y) -- node[below,xshift=-3mm, yshift=3mm] {\y} (-.1,\y);
\draw[->,line width=0.15mm] (0,-4.5) -- (0,6.5);
\draw[->,line width=0.15mm] (-6.5,0) -- (6.5,0);
\end{tikzpicture}
}
\caption{\textbf{Part III, Case (2)(ii).} Every row that intersects $R_1$ does not have a west bump and there exists a special west pipe on some row $r_j$. We assume that there are no VEs removed other than the ones shown in the figure. We have $r_j = r_4$ and so the special west pipe is $(-2, 4), (-1, 4), (0, 4)$ with $\textbf{v} = (-1, 4)$ and $\textbf{w} = (1, 3)$. Let us observe that after the robot follows $WPF(1, 1000)$ starting at $(-6, 4)$ it gets to $\textbf{v'} = (-4, 4)$ which is the middle vertex of the ``fake'' west pipe $(-5, 4), (-4, 4), (-3, 4)$. Further note that after the robot follows $WPF(1, 1000)$ starting at $(-3, 4)$ it gets to $\textbf{v}$. For this example we have $K=SE^2NWSW^2NE$ and after the robot follows $K$ starting from $\textbf{v}$ it gets to $\textbf{z} = (-1, 3)$, which is indeed on $r_u=r_3$ (see the blue walk). In addition, note that if the robot starts from $\textbf{v'}$ and follows $K$ it gets back to $\textbf{v'}$ (see the green circuit). We can take $H' = EENSENSEEN^3S^3E^{-1}N^2S^2E$ which has the required form and the property that after the robot starts from $\textbf{z} = (-1, 3)$ and follows $H'$ it visits the destination point. The reader may assume that the robot starts at $(-4, 0)$ and it follows $F = N^3 W^2 N (WPF(1, 1000) \text{ } K \text{ } H \text{ } SEN)^{10}$ to see how the algorithm $F$ solves the maze: after it follows $N^3 W^2 N$, the robot gets to $(-6, 4)$; further, after the first iteration of $WPF(1, 1000) \text{ } K \text{ } H \text{ } SEN$, it gets to $(-3, 4)$ as $K$ and $H$ do not change the position of the robot while it is strictly at the west of $\textbf{v}$; after the second iteration of $WPF(1, 1000) \text{ } K \text{ } H \text{ } SEN$, the robot visits the destination point.} \label{7}
\end{figure}
\noindent
\textbf{2(ii)} We assume that the previous case does not hold, so every row that intersects the column $R_1$ does not have a west bump, i.e. each such row is a path in the west strip. In addition, we assume there exists a special west pipe on some row $r_j$. We recall that the west pipes are easternmost configurations in the west strip formed by a HE followed by a HNE. Denote by $\textbf{v}$ the easternmost vertex of the HE of the special west pipe. Assume without loss of generality that $j>u$, where $\textbf{w} = (a+1, u)$ is the uppermost point of the finite column $R_1$.

We start by defining a new algorithm called $west \_ pipe \_ finder$:
\[
WPF(a,e):=(E^eWS^aEN^a)^e,
\]
with its counterpart $east \_ pipe \_ finder$. This is used directly in the final algorithm $F$ and it will be analysed later (see Figure~\ref{7}).

We then define the algorithm $$K=S^{j-u}E^dN^{j-u}WS^{j-u}W^dN^{j-u}E,$$ where $d$ is the difference in longitude between $c_{a+1}$ and $\textbf{v}$. 

\textbf{Claim. } For any maze in $F_i$, if the robot starts at $\textbf{v}$ and follows $K$ it gets at a certain known point $\textbf{z}$ (given the tertiary rectangle) on the row $r_u$.
\begin{proof}
Starting at $\textbf{v}$, after the robot follows $S^{j-u}$ it gets on the row $r_u$; after it follows $S^{j-u}E^d$ it gets to $\textbf{w}$; after it follows $S^{j-u}E^dN^{j-u}$ it remains fixed at $\textbf{w}$; after it follows $S^{j-u}E^dN^{j-u}W$ it gets to $(a, u)$, on the row $r_u$ to the west of $\textbf{w}$; finally, while it executes $S^{j-u}W^dN^{j-u}E$ starting at $(a, u)$ it does not leave the square $\{ (\pm q, \: \pm q) \}$; while it executes both the subalgorithms $S^{j-u}$ and $N^{j-u}$ of $S^{j-u}W^dN^{j-u}E$ it does not hit any VNE (see Figure~\ref{7}). The conclusion follows.
\end{proof}
\noindent
\textbf{Remark. } It is easy to check that if the robot starts from the easternmost vertex $\textbf{v'}$ of a HE followed by a HNE on $r_j$ with $\textbf{v'}$ strictly at the west of $\textbf{v}$ and it follows $K$, then the robot remains in the west strip while following $K$ and after it follows $K$, it returns back to the starting point $\textbf{v'}$ (see Figure~\ref{7}). The algorithm $K$ was constructed specifically to have this property, together with the one proved in the Claim above.

By inspecting the tertiary rectangle, we construct an algorithm $H'$ of the form $H'=\prod_{i=1}^hN^{k_i}N^{-k_i}E^{\epsilon_i}$, where $\epsilon_i \in \{-1,1\}$ and $k_i$ is an integer for all $1 \le i \le h$, such that if the robot starts at $\textbf{z}$ and it follows $H'$ it visits the destination point. Let $H=H'E^{|H'|}$. We observe that if the robot starts from the easternmost vertex $\textbf{v'}$ of a HE followed by a HNE on the row $r_j$ in the west strip and follows $H$, it remains in the west strip and it returns to the same point $\textbf{v'}$. We finally define the algorithm:
\[
F = N^u \text{ } W^q \text{ } N^{j-u} \text{ } (WPF(j-u,\lambda+q) \text{ } K \text{ } H \text{ } S^{j-u} \text{ } E \text{ } N^{j-u})^{\lambda+q}.
\]
\begin{prop}
For any maze in $F_i$, after the robot follows $X \text{ } F$, it visits the destination point.
\end{prop}
\begin{proof}
We may assume without loss of generality that after the robot follows $X \text{ } N^u \text{ } W^q \text{ } N^{j-u}$ it gets on the row $r_j$, to the west of the point $\textbf{v}$. While the robot is at the west of the point $\textbf{v}$ on the row $r_j$, after each instance of $WPF (j-u,\lambda+q)$ it advances eastwards to the easternmost vertex $\textbf{v'}$ of a HE followed by a HNE on the row $r_j$. While $\textbf{v'}$ is strictly at the west of $\textbf{v}$, the robot follows the algorithm $K \text{ } H$ and returns back to $\textbf{v'}$; while the robot follows the algorithm $S^{j-u}EN^{j-u}$ it advances one unit to the east of $\textbf{v'}$ on the row $r_j$. By the choice of parameters, the robot eventually arrives at $\textbf{v'}=\textbf{v}$. Immediately afterwards, it follows $K \text{ } H$ and it visits the destination point (see Figure~\ref{7}).
\end{proof}
\noindent
\textbf{2(iii)} We assume there exists a magical west row $r_j$. We recall that a magical west row is a row which is a path when restricted to the west strip, and it contains a HNE; its west cutoff is its westernmost HNE. Denote by $\textbf{v}$ the westernmost vertex of the west cutoff of $r_j$. Then, by inspecting the tertiary rectangle, we can construct an algorithm $K$ such that if the robot starts from $\textbf{v}$ and follows $K$ it gets to the destination point. We define the algorithm
\[
F = N^jE^{\lambda+q}K.
\]
\textbf{Claim. } For any maze in $F_i$, after the robot follows $X \text{ } F$, it visits the destination point.
\begin{proof}
We may assume without loss of generality that after the robot follows $X \text{ } N^j$, it gets on the row $r_j$. Therefore, after it follows $X \text{ } N^j \text{ } E^{\lambda+q}$ the robot gets to the point $\textbf{v}$. Hence, after the robot follows $X \textbf{ } F$ it gets to the destination point.
\end{proof}
\begin{figure}[h!]
\centering
\resizebox{0.75\textwidth}{!}{
\begin{tikzpicture}
    [
        dot/.style={circle,draw=black, fill,inner sep=2pt},
        cross/.style={cross out, draw=black, minimum size=2*(#1-\pgflinewidth), inner sep=4pt, outer sep=4pt},
    ]

\foreach \x in {-6,...,6}{
 \foreach \y in {-4,...,6}{
    \node[dot] at (\x,\y){ };
}}

\draw[line width=0.7mm, red] (-6, -4) -- (-6, 6);
\draw[line width=0.7mm, red] (-5, -4) -- (-5, 6);
\draw[line width=0.7mm, red] (-4, -4) -- (-4, 6);
\draw[line width=0.7mm, red] (-3, -4) -- (-3, 6);
\draw[line width=0.7mm, red] (-2, -4) -- (-2, 6);
\draw[line width=0.7mm, red] (-1, -4) -- (-1, 6);
\draw[line width=0.7mm, red] (0, -4) -- (0, 6);

\draw[line width=0.7mm, red] (1, 6) -- (1, 4);
\draw[line width=0.7mm, red] (1, 3) -- (1, 1);
\draw[line width=0.7mm, red] (1, 0) -- (1, -2);
\draw[line width=0.7mm, red] (1, -3) -- (1, -4);

\draw[line width=0.7mm, red] (2, 6) -- (2, 3);
\draw[line width=0.7mm, red] (2, 2) -- (2, 0);
\draw[line width=0.7mm, red] (2, -2) -- (2, -4);

\draw[line width=0.7mm, red] (3, 6) -- (3, 3);
\draw[line width=0.7mm, red] (3, 2) -- (3, 1);
\draw[line width=0.7mm, red] (3, -1) -- (3, -3);
\draw[line width=0.7mm, red] (3, -4) -- (3, -4);

\draw[line width=0.7mm, red] (4, 6) -- (4, 4);
\draw[line width=0.7mm, red] (4, 3) -- (4, 2);
\draw[line width=0.7mm, red] (4, 1) -- (4, -3);
\draw[line width=0.7mm, red] (4, -4) -- (4, -4);

\draw[line width=0.7mm, red] (5, -4) -- (5, 6);
\draw[line width=0.7mm, red] (6, -4) -- (6, 6);

\draw[line width=0.7mm, red] (-5, 5) -- (-2, 5);
\draw[line width=0.7mm, red] (1, 5) -- (3, 5);
\draw[line width=0.7mm, red] (5, 5) -- (6, 5);

\draw[line width=0.7mm, red] (-6, 4) -- (-4, 4);
\draw[line width=0.7mm, red] (-2, 4) -- (-1, 4);
\draw[line width=0.7mm, red] (3, 4) -- (5, 4);

\draw[line width=0.7mm, red] (-6, 3) -- (1, 3);

\draw[line width=0.7mm, red] (-6, 2) -- (2, 2);
\draw[line width=0.7mm, red] (-5, 2) -- (-3, 2);

\draw[line width=0.7mm, red] (2, 1) -- (4, 1);
\draw[line width=0.7mm, red] (-6, 1) -- (1, 1);

\draw[line width=0.7mm, red] (-3, -1) -- (2, -1);

\draw[line width=0.7mm, red] (-2, -2) -- (1, -2);
\draw[line width=0.7mm, red] (3, -2) -- (4, -2);
\draw[line width=0.7mm, red] (5, -2) -- (6, -2);

\draw[line width=0.7mm, red] (-1, -3) -- (1, -3);

\draw[line width=1mm, red] (-4, 4) -- (-4, 3);
\draw[line width=1mm, red] (-4, 3) -- (-2, 3);
\draw[line width=1mm, red] (-2, 3) -- (-2, 4);

\draw[line width=1mm, red] (-1, 4) -- (-1, 3);
\draw[line width=1mm, red] (-1, 3) -- (1, 3);
\draw[line width=1mm, red] (0, 3) -- (0, 2);
\draw[line width=1mm, red] (0, 2) -- (-2, 2);
\draw[line width=1mm, red] (-2, 2) -- (-2, 3);
\draw[line width=1mm, red] (-2, 3) -- (-1, 3);

\draw[line width=1mm, blue] (-5, -1) -- (-5, 3);
\draw[line width=1mm, blue] (-5, 3) -- (-4, 3);
\draw[line width=1mm, blue] (-4, -1) -- (-4, 3);

\draw[line width=1mm, green] (-3, -1) -- (-3, 3);
\draw[line width=1mm, green] (-3, 3) -- (-2, 3);
\draw[line width=1mm, green] (-3, -1) -- (-2, -1);
\draw[line width=1mm, green] (-2, -1) -- (-2, 3);

\draw node[fill,circle,inner sep=5.5pt,minimum size=2pt] at (-3, -1) {};

\draw (-2.7, -0.7) -- node {\textbf{v}} (-2.7, -0.7);

\draw (3,-3) node[cross] {};

\foreach \x in {-6,...,6}
    \draw (\x,.1) -- node[below,yshift=-1mm] {\x} (\x,-.1);
\foreach \y in {-4,...,6}
    \draw (.1,\y) -- node[below,xshift=-3mm, yshift=3mm] {\y} (-.1,\y);
\draw[->,line width=0.15mm] (0,-4.5) -- (0,6.5);
\draw[->,line width=0.15mm] (-6.5,0) -- (6.5,0);
\end{tikzpicture}
}
\caption{\textbf{Part III, Case (2)(iv).} Every row that intersects $R_1$ does not have a west bump, i.e. all such rows are paths in the west strip and there exists a special almost empty west row $r_j$. We assume that there are no VEs removed other than the ones shown in the figure. We have $R_1 = (1, 1), (1, 2), (1, 3)$, so $r_1$, $r_2$, $r_3$ are paths in the west strip, moreover $j = -1$, so $r_{-1}$ is the special almost empty west row. Its west cutoff is the HNE $\{ (-4, -1), \textbf{v} = (-3, -1) \}$. We construct an algorithm $K$ by inspecting the tertiary rectangle such that if the robot starts from $\textbf{v}$ and follows $K$, it gets to the destination point. For example we may take $K = N^3 E^5 S E^2 S^3 W S$. We may assume that the robot starts at $(-5, 0)$ and it follows $F = N^{-1}$ $W^{100}$ $(S^{-4} E N^{-4} W)^{100}$ $K$. After the robot follows $N^{-1}$ $W^{100}$, it gets to $(-5, -1)$ on the row $r_j=r_{-1}$ at a longitude not greater than that of $\textbf{v}$. Let us see what is the position of the robot after it follows one instance of $(S^{-4} E N^{-4} W)$, starting from $r_{-1}$: while it starts strictly at the west of $\textbf{v}$, its longitude increases by $1$ (see the blue path); if it starts at $\textbf{v}$, it comes back to $\textbf{v}$ (see the green path). The exponent of $(S^{-4} E N^{-4} W)$ is large enough for the robot to reach $\textbf{v}$ after it follows $(S^{-4} E N^{-4} W)^{100}$. After that, the robot follows $K$ and it visits the destination point.} \label{8}
\end{figure}
\noindent
\textbf{2(iv) } We assume that every row that intersects the finite column $R_1$ does not have a west bump and there exists a special almost empty west row $r_j$. We recall that a special almost empty west row is a row that in the west strip is the complement of an infinite path followed by a non-empty finite path; its west cutoff is its easternmost HNE in the west strip. We recall that $\textbf{w} = (a+1, u)$ is the uppermost point of $R_1$ and let $\textbf{v}$ be the easternmost vertex of the west cutoff of $r_j$. Then, by inspecting the tertiary rectangle, we can construct an algorithm $K$ such that if the robot starts from $\textbf{v}$ and follows $K$ it gets to the destination point. We define the algorithm
\[
F= N^j \text{ } W^{\lambda+q} \text{ } (S^{j-u}EN^{j-u}W)^{\lambda+q} \text{ } K.
\]
\textbf{Claim. } For any maze in $F_i$, after the robot follows $X \text{ } F$, it visits the destination point.
\begin{proof}
We may assume without loss of generality that after the robot follows $X \text{ } N^j \text{ } W^{\lambda+q}$ it gets on the row $r_j$ to the west of the point $\textbf{v}$. While the robot follows one instance of $S^{j-u}EN^{j-u}W$ it returns on the row $r_j$ and advances one unit eastwards if it is at the westernmost vertex of a HNE; it returns to the same point if it is at the westernmost vertex of a HE. By the choice of exponent, after the robot follows $N^j \text{ } W^{\lambda+q} \text{ } (S^{j-u}EN^{j-u}W)^{\lambda+q}$ it remains stuck at the point $\textbf{v}$. Immediately afterwards, it follows $K$ and it gets to the destination point (see Figure~\ref{8}).
\end{proof}
\noindent
\textbf{2(v) } We assume that every row that intersects the column $R_1$ does not have a west bump. In addition we assume that there exists a special empty west row $r_{w_3}$. We recall that an empty west row is a row that in the west strip is empty and the special empty west row is the empty west row of smallest latitude greater than $-3p$ with respect to the standard well order on $\Z$. We recall that $\textbf{w} = (a+1, u)$ is the uppermost point of the finite column $R_1$ and let $\textbf{v}$ be the easternmost vertex in the west strip on the row $r_{w_3}$. We may assume without loss of generality that $w_3>u$.

By inspecting the primary rectangle, we construct an algorithm $H'$ of the form $H' = \prod_{m=1}^hN^{k_m}N^{-k_m}E^{\epsilon_m}$, where ${\epsilon_m} \in \{-1, 1\}$ and $k_m$ is an integer with $|k_m| \le 2p$ for all $1 \le m \le h$, such that if the robot starts at $\textbf{w}$ and it follows $H'$, it visits the destination point (see $H'$ in Figure~\ref{7}). Let $H = H'W^{|H'|}$. We note that if the robot is in the origin in a maze with no VNEs and it follows $H$ it returns to the $x$-axis and its latitude does not increase. We further note that if the robot starts from $\textbf{v}$ and it follows $H$, it oscillates about latitude $w_3$ without hitting any VNE and at the end it returns back to the starting point $\textbf{v}$.

We define the algorithm
\[
F = N^{w_3} (S^{w_3-u} E N^{w_3-u} H)^{\lambda+q}.
\]
\textbf{Claim. } For any maze in $F_i$, after the robot follows $X \text{ } F$, it visits the destination point.
\begin{proof}
We may assume without loss of generality that after the robot follows $X \text{ } N^{w_3}$ it gets on the row $r_{w_3}$ at the west of the point $\textbf{v}$. While the robot follows each instance of $S^{w_3-u} E N^{w_3-u} H$ in the west strip, it advances eastwards one unit making an oscillation about the row $r_{w_3}$. By the choice of exponent, after a certain instance of $S^{w_3-u} E N^{w_3-u} H$, the robot eventually gets to the point $\textbf{v}$. Immediately afterwards, it follows another instance of $S^{w_3-u} E N^{w_3-u} H$ and it gets to the destination point. Indeed, if the robot starts at the point $\textbf{v}$ and it follows $S^{w_3-u} E N^{w_3-u}$, it gets to the point $\textbf{w}$. If the robot starts at $\textbf{w}$ and it follows $H$, it gets to the destination point. The conclusion follows.
\end{proof}
\noindent
\textbf{2(vi) } This is the final case, where we may assume all of the following: every row that intersects the column $R_1$ does not have a west bump; there does not exist a west pipe; there does not exist a magical west row; there does not exist a special almost empty west row; there does not exist a special empty west row. Then every row at latitude greater than $-3p$ with respect to the well order on $\Z$ is a path in the west strip and indeed a path in the maze; every row that intersects the finite column $R_1$ is a path in the west strip and indeed a path in the maze; each row at latitude at most $3p$ with respect to the standard well order on $\Z$ is known to be either a path or the complement of a path in the west strip. We recall that $\textbf{w} = (a+1, u)$ is the uppermost point of the finite column $R_1$.

By inspecting the primary rectangle we can construct an algorithm $H'$ of the form $H' = \prod_{m=1}^hN^{k_m}N^{-k_m}E^{\epsilon_m}$, where ${\epsilon_m} \in \{-1, 1\}$ and $k_m$ is an integer with $|k_m| \le 2p$ for all $1 \le m \le h$, such that if the robot starts at $\textbf{w}$ and follows $H'$ it visits the destination point (see $H'$ in Figure~\ref{7}). Let $H = H' E^r$, where $r$ is an integer such that if the robot follows $H$ on a maze without meeting any VNE and HNE then it returns back to its starting point. We construct the algorithm
\[
F = N^u (E N^{6p} H S^{6p})^{\lambda+p}.
\]
\textbf{Claim. } For any maze in $F_i$, after the robot follows $X \text{ } F$, it visits the destination point.
\begin{proof}
We may assume without loss of generality that after the robot follows $X \text{ } N^u$ it gets in the west strip on the row $r_u$. While the robot executes one instance of $E N^{6p} H S^{6p}$ it advances one unit eastwards in the west strip on the row $r_u$ without meeting any VNE or HNE. Indeed, every row at latitude greater than $3p$ is a path in the maze. The robot eventually eventually gets at $\textbf{w}$. Immediately afterwards, it follows $N^{6p}$, remaining at $\textbf{w}$ and then $H$, hence it gets to the destination point.
\end{proof}
This finally solves \textbf{Case (2)} in which the destination point was connected with the west strip by a finite number of finite columns. It is immediate to see that the presence of infinite columns in the obstacle strip does not affect any of the arguments made in this case. \\
\textbf{Case (3). } We assume that the destination point is in the obstacle strip and there exists some parameter $h_{(i, i+1)} < \infty$. We recall that this is equivalent to the existence of a pair of consecutive upper infinite columns (or a consecutive upper infinite column and an infinite column at the border of the obstacle strip and either the east or west strip) which are not connected by HEs at arbitrarily high latitudes. By symmetry, treating this case also solves the homologous case in which there exists some parameter $l_{(i, i+1)} < \infty$. \\
\textbf{3(i) } We assume $h_{(a, a+1)}<\infty$. We recall that the pair of columns $(c_a, c_{a+1})$ is at the border between the west strip and the obstacle strip and we also recall that the pair of columns $(c_b, c_{b+1})$ is at the border between the obstacle strip and the east strip. We assume without loss of generality that there exists a HE between the west strip and a finite column or a lower infinite column (otherwise we are done by \textbf{Part I}). Let $R$ be a finite column or a lower infinite column on the column $c_{a+1}$ such that there exists a HE between the west strip and $R$ on some row $r_c$. Let $\textbf{w}$ be the uppermost vertex of $R$. Let $j=h_{(a,a+1)}+l$ and $\textbf{v} = (a,j)$ be the easternmost point on the row $r_j$ in the west strip. We recall the generic algorithm
\[
SME^{(j-c)}(a,e,L)=((((((L)^e S^{j-c} E N^{j-c})^e E)^e N E S)^e S E N)^e N N E S S)^e...S^a E N^a)^e.
\]
We define the algorithm
\[
F = N^j SME^{(j-c)}(\lambda+j+q,\lambda+j+q,L).
\]
\textbf{Claim. } For any maze in $F_i$, after the robot follows $X \text{ } F$, it visits the destination point.
\begin{proof}
We may assume without loss of generality that after the robot follows $X \text{ } N^j$ it is in the west strip on the row $r_j$. By the choice of parameters and by Corollary~\ref{c1}, while the robot follows $SME^{(j-c)}(\lambda+j+q,\lambda+j+q,L)$ it advances eastwards in the west strip oscillating about row $r_j$. After the robot starts from some point on the row $r_j$ in the west strip and follows $L$ its longitude does not decrease and it remains in the west strip. It eventually gets to the point $\textbf{v}$. After the robot starts from $\textbf{v}$ and follows $S^{j-c}EN^{j-c}$ it gets to the point $\textbf{w}$. Immediately afterwards, it follows $L$ and gets to the destination point.
\end{proof}
\noindent
\textbf{3(ii) } Consider the pair of consecutive columns $(c_i, c_{i+1})$ which is not at the border between the west strip and the obstacle strip. Assume there are not arbitrarily high HEs between the columns $c_i$ and $c_{i+1}$, i.e. $h_{(i,i+1)}<\infty$. Assume further that there exists a pass on some row $r_c$ between the west strip and an upper infinite column $R$ (see the case \textbf{3(i)}). We define $K = S^{\lambda+2q+|h_{(i,i+1)}|+1} N^{\lambda+2q+|h_{(i,i+1)}|+1}$. We define the algorithm
\[
F = N^c \text{ } SME(\lambda+q,\lambda+q,K) \text{ } S^{\lambda+2q+|h_{(i,i+1)}|+1} \text{ } L.
\]
\textbf{Claim. } For any maze in $F_i$, after the robot follows $X \text{ } F$, it visits the destination point.
\begin{proof}
We may assume without loss of generality that after the robot follows $X \text{ } N^c$ it gets in the west strip on the row $r_c$. While the robot follows $SME(\lambda+q,\lambda+q,K)$ it advances eastwards in the west strip oscillating about the row $r_c$. It eventually enters the upper infinite columns $R$. Immediately afterwards it executes $K$ and gets to latitude at least $\lambda+q+|h_{(i,i+1)}|+1$ in $R$. While the robot is in the obstacle strip and follows $SME$ it advances eastwards through upper infinite columns at latitudes greater than $h_{(i,i+1)}+1$. Hence the robot remains stuck in some column $c_j$ with $j \le i$ at latitude $\lambda+2q+|h_{(i,i+1)}|+1$ above the highest VNE in the columns $c_r$ with $a \le r \le j$. After that the robot follows $S^{\lambda+2q+|h_{(i,i+1)}|+1}$ and gets to a special point. Therefore, after the robot follows $X \text{ } F$ it gets to the destination point.
\end{proof}
This finally solves \textbf{Case (3)} in which the destination point is in the obstacle strip in a finite or infinite column and there exists some parameter $h_{(i, i+1)} < \infty$. Moreover, the case in which there exists some parameter $l_{(i, i+1)} < \infty$ is tackled similarly by symmetry. Finally, it is immediate to see that the presence of infinite columns in the obstacle strip does not affect any of the arguments made in this case.\\
\textbf{Case (4). } This is the final case, in which we may assume that \textbf{Case (3)} does not hold and the destination point is in the obstacle strip in a finite column, an upper infinite column or lower infinite column and it is connected to the west strip by a (finite, possibly empty) sequence of finite columns followed by a (finite, non-empty) sequence of upper infinite columns, in this order starting from the destination point and advancing towards the west strip. Indeed, we may assume that the west strip is accessible by \textbf{Part II}. The case in which is the destination point is not in the obstacle strip is tackled in \textbf{Case (1)}. Furthermore, if we assume that the destination point is in the obstacle strip, it may either be reachable from the west strip through a finite sequence of finite columns tackled in \textbf{Case (2)} or otherwise it must be reachable from the west strip through a finite sequence of upper/lower infinite and finite columns which contains at least one upper or lower infinite column. Choose any such finite sequence of columns which leads to the destination point starting from the west strip and call the last upper or lower infinite column in the sequence $c$; this may either be the last element of the sequence or it might be followed by a finite sequence of finite columns. By \textbf{Case (3)} we may assume that there are horizontal edges between consecutive upper infinite columns and between consecutive lower infinite columns at latitudes arbitrarily high and low, respectively. Hence, assuming without loss of generality as we may that $c$ is an upper infinite column, $c$ can be reached from the west strip through a finite sequence of upper infinite columns. Therefore, the last case that we tackle is the one in which we assume that the destination point is connected to the west strip by a (finite, possibly empty) sequence of finite columns followed by a (finite, non-empty) sequence of upper infinite columns, in this order starting from the destination point and advancing towards the west strip.

The condition that \textbf{Case (3)} does not hold means that in this case we assume that all the parameters $h_{(i, i+1)}$ and $l_{(i, i+1)}$ are all infinity for $a \leq i \leq b$; in particular, this implies that there exists a west ascending chain. We recall that the pair of columns $(c_a, c_{a+1})$ are at the border between the west strip and the obstacle strip; we also recall that the pair of columns $(c_b, c_{b+1})$ are at the border between the obstacle strip and the east strip. We further recall that a west ascending chain is a finite sequence of HEs: $HE_a, HE_{a+1}, \hdots, HE_b$ such that $HE_a$ is the upper west pass (i.e. the lowermost HE between the west strip and the upper infinite column on $c_{a+1}$ above all passes in the obstacle strip) and $HE_m$ is the lowermost HE between the pair of columns $(c_m, c_{m+1})$ at latitude at least that of $HE_{m-1}$ for $m=a+1, \hdots, b$. In this case, we take $R_{a+1}, R_{a+2}, \hdots , R_n$ to be a finite non-empty sequence of upper infinite columns and $R_{n+1}, \hdots , R_k$ to be a finite possibly empty sequence of finite columns and finally we take $R$ to be a finite, upper infinite or lower infinite column such that $R$ contains the destination point and there exists a HE between the west strip and $R_{a+1}$, between $R_m$ and $R_{m+1}$ for $a+1 \leq m \leq k -1$ and between $R_k$ and $R$. By the discussion at the beginning of the case, we may assume that such a sequence has the extra property that $R_m$ is on the column $c_m$ for $a+1 \leq m \leq n$. Moreover, if $R_{n+1}$ exists we may assume that $R_{n+1} \in c_{n+1}$; indeed, $R_{n+1} \in c_{n+1}$ or $R_{n+1} \in c_{n-1}$ and if $R_{n+1} \in c_{n-1}$ then we can use the symmetry of the argument in \textbf{Part II} to assume that the robot is in the east strip on the $x$-axis. From that perspective, we can use the arguments from the case that we are treating with $R_{n+1} \in c_{n+1}$. Obviously, if $R_{n+1}$ does not exist, by the same argument we may assume that $R$ is in $c_{n+1}$. Finally, say that the row $r_i$ contains the upper west pass and note that the upper west pass is above all passes in the obstacle strip and therefore, as \textbf{Case (3)} does not hold, it is above all special vertices. \\
\textbf{4(i) }We assume there exists a magical west row $r_j$. We recall that a magical west row is a row which is a path when restricted to the west strip, and it contains a HNE; its west cutoff is its westernmost HNE. We see in the end that our argument also solves the case when there exists a magical east row. Denote by $\textbf{v}$ the westernmost vertex of the west cutoff of $r_j$. Then, by inspecting the tertiary rectangle, we can construct an algorithm $K$ such that if the robot starts from $\textbf{v}$ and follows $K$ it gets to the destination point. We construct the algorithm
\[
F = N^jE^{\lambda+q}K.
\]
\textbf{Claim. } For any maze in $F_i$, after the robot follows $X \text{ } F$, it visits the destination point.
\begin{proof}
We may assume without loss of generality that after the robot follows $X \text{ } N^j$, it gets on the row $r_j$. Therefore, after it follows $X \text{ } N^j \text{ } E^{\lambda+q}$ the robot gets to the point $\textbf{v}$. Hence, after the robot follows $X \textbf{ } F$ it gets to the destination point.
\end{proof}
Clearly, in this case we may easily drop the general assumption that $R_{n+1} \in c_{n+1}$. Therefore, this argument also solves the case when there exists a magical east row.
\begin{figure}
\centering
\resizebox{0.5\textwidth}{!}{
\begin{tikzpicture}
    [
        dot/.style={circle,draw=black, fill,inner sep=2pt},
        cross/.style={cross out, draw=black, minimum size=2*(#1-\pgflinewidth), inner sep=4pt, outer sep=4pt},
    ]

\foreach \x in {-4,...,4}{
 \foreach \y in {-4,...,4}{
    \node[dot] at (\x,\y){ };
}}

\draw[line width=0.7mm, red] (-4, -4) -- (-4, 4);
\draw[line width=0.7mm, red] (-3, -4) -- (-3, 4);
\draw[line width=0.7mm, red] (-2, -4) -- (-2, 4);
\draw[line width=0.7mm, red] (-1, -4) -- (-1, 4);
\draw[line width=0.7mm, red] (0, -4) -- (0, 4);
\draw[line width=0.7mm, red] (1, -4) -- (1, 4);
\draw[line width=0.7mm, red] (2, 4) -- (2, 2);
\draw[line width=0.7mm, red] (2, 0) -- (2, -1);
\draw[line width=0.7mm, red] (2, -2) -- (2, -3);
\draw[line width=0.7mm, red] (3, 3) -- (3, 1);
\draw[line width=0.7mm, red] (3, -1) -- (3, -2);
\draw[line width=0.7mm, red] (4, -2) -- (4, -4);
\draw[line width=0.7mm, red] (4, 0) -- (4, 3);

\draw[line width=0.7mm, red] (3, 4) -- (4, 4);
\draw[line width=0.7mm, red] (2, 4) -- (-2, 4);
\draw[line width=0.7mm, red] (-2, 3) -- (0, 3);
\draw[line width=0.7mm, red] (1, 3) -- (3, 3);
\draw[line width=0.7mm, red] (-4, 3) -- (-3, 3);
\draw[line width=0.7mm, red] (-4, 2) -- (-3, 2);
\draw[line width=0.7mm, red] (-1, 2) -- (1, 2);
\draw[line width=0.7mm, red] (2, 2) -- (3, 2);
\draw[line width=0.7mm, red] (-1, 1) -- (2, 1);
\draw[line width=0.7mm, red] (-3, 1) -- (-2, 1);
\draw[line width=0.7mm, red] (3, 1) -- (4, 1);
\draw[line width=0.7mm, red] (-2, 0) -- (-1, 0);
\draw[line width=0.7mm, red] (1, 0) -- (3, 0);
\draw[line width=0.7mm, red] (-2, -1) -- (1, -1);
\draw[line width=0.7mm, red] (2, -1) -- (3, -1);
\draw[line width=0.7mm, red] (-4, -1) -- (-3, -1);
\draw[line width=0.7mm, red] (1, -2) -- (-1, -2);
\draw[line width=0.7mm, red] (2, -2) -- (4, -2);
\draw[line width=0.7mm, red] (-1, -3) -- (2, -3);
\draw[line width=0.7mm, red] (3, -3) -- (4, -3);
\draw[line width=0.7mm, red] (-2, -4) -- (-1, -4);
\draw[line width=0.7mm, red] (0, -4) -- (2, -4);
\draw[line width=0.7mm, red] (3, -4) -- (4, -4);
\draw[line width=0.7mm, red] (-4, -4) -- (-3, -4);

\draw[line width=0.7mm, green] (-3, -2) -- (-3, 1);
\draw[line width=0.7mm, green] (-2, -2) -- (-2, 1);
\draw[line width=0.7mm, green] (-3, 1) -- (-2, 1);
\draw[line width=0.7mm, green] (-2, -1) -- (-1, -1);
\draw[line width=0.7mm, green] (-1, -2) -- (-1, -1);

\draw node[fill,circle,inner sep=5.5pt,minimum size=2pt] at (-1, -2) {};

\draw (-0.7, -1.7) -- node {\textbf{v}} (-0.7, -1.7);

\draw (4,0) node[cross] {};

\foreach \x in {-4,...,4}
    \draw (\x,.1) -- node[below,yshift=-1mm] {\x} (\x,-.1);
\foreach \y in {-4,...,4}
    \draw (.1,\y) -- node[below,xshift=-3mm, yshift=3mm] {\y} (-.1,\y);
\draw[->,line width=0.15mm] (0,-4.5) -- (0,4.5);
\draw[->,line width=0.15mm] (-4.5,0) -- (4.5,0);
\end{tikzpicture}
}
\caption{\textbf{Part III, Case (4)(ii).} There exists a special almost empty west row $r_j$ and let $\textbf{v}$ be the easternmost vertex of the west cutoff of $r_j$. We assume that there are no VEs removed other than the ones shown in the figure. In this example, let $r_j = r_{-2}$ and so $\textbf{v} = (-1, -2)$. Let us see how the robot gets to $\textbf{v}$ after it follows $AME (5,5)$ starting from $(-3, -2)$. As long as the robot is at the west of $\textbf{v}$, the $W$ instructions do not decrease the longitude of the robot, as there are no HEs on $r_j$ at the west of $\textbf{v}$. Therefore, the robot takes the green path to $\textbf{v}$. Once the robot gets at $\textbf{v}$, every subsequent subalgorithm of the form $N^iES^iW$ takes it back to $\textbf{v}$: after $N^iES^i$, the robot is either at $\textbf{v} = (-1, -2)$ or $(0, -2)$; immediately afterwards, the robot follows $W$ and the presence of a HE between $(-1, -2)$ and $(0, -2)$ guarantees that the robot returns back to $\textbf{v}$. Immediately after the robot follows $AME$ and it gets to $\textbf{v}$, it follows $K$ and it visits the destination point. For our example we can take $K = N^3 E^2 N^2 E^2 S^2 E S$.} \label{10}
\end{figure}

\noindent
\textbf{4(ii) } We assume there exists a special almost empty west row $r_j$ and we call $\textbf{v}$ the easternmost vertex of the west cutoff of $r_j$. We recall that an almost empty west row is a row that in the west strip is the complement of an infinite path followed by a non-empty finite path; its west cutoff is its easternmost HNE in the west strip. Then, by inspecting the tertiary rectangle, we can construct an algorithm $K$ such that if the robot starts from $\textbf{v}$ and follows $K$ it gets to the destination point (see Figure~\ref{10}).

We then define the algorithm $auxiliary \_ move \_ east$,
\[
AME (a, e) = ((NESW)(SENW)(N^2ES^2W)(S^2EN^2W) \hdots (S^aEN^aW))^e.
\]
We finally define the algorithm
\[
F:= N^j \text{ } W^q \text{ } AME (\lambda+q, \lambda+q) \text{ } K.
\]
\textbf{Claim. } For any maze in $F_i$, after the robot follows $X \text{ } F$, it visits the destination point. 
\begin{proof}
We may assume without loss of generality that after the robot follows $X \text{ } N^j \text{ } W^q$ it gets on the row $r_j$ at a longitude at most that of $\textbf{v}$. By the choice of parameters, while the robot follows $AME (\lambda+q, \lambda+q)$ it advances eastwards in the west strip oscillating about the row $r_j$ and it remains stuck at the point $\textbf{v}$. Hence, after the robot follows $X \text{ } F$, it reaches the destination point (see Figure~\ref{10}).
\end{proof}
\begin{figure}[h!]
\centering
\resizebox{0.75\textwidth}{!}{
\begin{tikzpicture}
    [
        dot/.style={circle,draw=black, fill,inner sep=2pt},
        cross/.style={cross out, draw=black, minimum size=2*(#1-\pgflinewidth), inner sep=4pt, outer sep=4pt},
    ]

\foreach \x in {-6,...,6}{
 \foreach \y in {-6,...,6}{
    \node[dot] at (\x,\y){ };
}}

\draw[line width=0.7mm, red] (-6, -6) -- (-6, 6);
\draw[line width=0.7mm, red] (-5, -6) -- (-5, 6);
\draw[line width=0.7mm, red] (-4, -6) -- (-4, 6);
\draw[line width=0.7mm, red] (-3, -6) -- (-3, 6);
\draw[line width=0.7mm, red] (-2, -6) -- (-2, 6);
\draw[line width=0.7mm, red] (-1, -6) -- (-1, 6);
\draw[line width=0.7mm, red] (0, -6) -- (0, 6);
\draw[line width=0.7mm, red] (1, -6) -- (1, 6);

\draw[line width=0.7mm, red] (2, -6) -- (2, -4);
\draw[line width=0.7mm, red] (2, -3) -- (2, 6);

\draw[line width=0.7mm, red] (3, -5) -- (3, -4);
\draw[line width=0.7mm, red] (3, -2) -- (3, 6);

\draw[line width=0.7mm, red] (4, -5) -- (4, -4);
\draw[line width=0.7mm, red] (4, -3) -- (4, 6);

\draw[line width=0.7mm, red] (5, -6) -- (5, -5);
\draw[line width=0.7mm, red] (5, -4) -- (5, -1);
\draw[line width=0.7mm, red] (5, 0) -- (5, 6);

\draw[line width=0.7mm, red] (6, -6) -- (6, 6);

\draw[line width=0.7mm, red] (-2, 5) -- (1, 5);
\draw[line width=0.7mm, red] (3, 5) -- (6, 5);

\draw[line width=0.7mm, red] (4, 4) -- (6, 4);
\draw[line width=0.7mm, red] (-5, 4) -- (-4, 4);

\draw[line width=0.7mm, red] (-1, 3) -- (0, 3);
\draw[line width=0.7mm, red] (2, 3) -- (4, 3);
\draw[line width=0.7mm, red] (5, 3) -- (6, 3);

\draw[line width=0.7mm, red] (-2, 2) -- (-3, 2);
\draw[line width=0.7mm, red] (0, 2) -- (1, 2);
\draw[line width=0.7mm, red] (2, 2) -- (3, 2);

\draw[line width=0.7mm, red] (-5, 1) -- (-4, 1);
\draw[line width=0.7mm, red] (-2, 1) -- (-1, 1);
\draw[line width=0.7mm, red] (0, 1) -- (2, 1);
\draw[line width=0.7mm, red] (4, 1) -- (5, 1);

\draw[line width=0.7mm, red] (0, 0) -- (1, 0);
\draw[line width=0.7mm, red] (4, 0) -- (5, 0);

\draw[line width=0.7mm, red] (-1, -1) -- (0, -1);
\draw[line width=0.7mm, red] (2, -1) -- (4, -1);

\draw[line width=0.7mm, red] (-5, -2) -- (-3, -2);
\draw[line width=0.7mm, red] (-1, -2) -- (0, -2);
\draw[line width=0.7mm, red] (1, -2) -- (2, -2);
\draw[line width=0.7mm, red] (4, -2) -- (5, -2);

\draw[line width=0.7mm, red] (-2, -3) -- (0, -3);

\draw[line width=0.7mm, red] (-6, -4) -- (-3, -4);
\draw[line width=0.7mm, red] (-2, -4) -- (-1, -4);
\draw[line width=0.7mm, red] (4, -4) -- (5, -4);

\draw[line width=0.7mm, red] (-5, -5) -- (-3, -5);
\draw[line width=0.7mm, red] (-1, -5) -- (0, -5);
\draw[line width=0.7mm, red] (3, -5) -- (4, -5);

\draw[line width=0.7mm, red] (-1, -6) -- (0, -6);
\draw[line width=0.7mm, red] (4, -6) -- (5, -6);

\draw[line width=1mm, green] (1, 1) -- (2, 1);
\draw[line width=1mm, green] (2, 2) -- (3, 2);
\draw[line width=1mm, green] (3, 3) -- (4, 3);
\draw[line width=1mm, green] (4, 4) -- (6, 4);

\draw node[fill,circle,inner sep=5.5pt,minimum size=2pt] at (-2, 1) {};
\draw node[fill,circle,inner sep=5.5pt,minimum size=2pt] at (2, 1) {};
\draw node[fill,circle,inner sep=5.5pt,minimum size=2pt] at (4, 4) {};
\draw node[fill,circle,inner sep=5.5pt,minimum size=2pt] at (5, -1) {};

\draw (-1.7,1.4) -- node {\textbf{v}} (-1.7,1.4);
\draw (2.4,1) -- node {\textbf{t}} (2.4,1);
\draw (5.4, -1) -- node {\textbf{z}} (5.4, -1);
\draw (4.3, 4.4) -- node {\textbf{w}} (4.3, 4.4);

\draw (3,-4) node[cross] {};

\foreach \x in {-6,...,6}
    \draw (\x,.1) -- node[below,yshift=-1mm] {\x} (\x,-.1);
\foreach \y in {-6,...,6}
    \draw (.1,\y) -- node[below,xshift=-3mm, yshift=3mm] {\y} (-.1,\y);
\draw[->,line width=0.15mm] (0,-6.5) -- (0,6.5);
\draw[->,line width=0.15mm] (-6.5,0) -- (6.5,0);
\end{tikzpicture}
}
\caption{\textbf{Part III, Case (4)(iii).} We assume there exist the special upper west paired HNEs. We assume that there are no VEs removed other than the ones shown in the figure, so $a=1$ and $b= 5$. The upper west pass between $c_1$ and $c_2$ is $HE_1 = (1, 1), (2, 1)$, above all the passes in the obstacle strip; the west ascending chain is coloured green. The chosen path from the west strip to the destination point goes through $R_2 = \{ (2, -3), (2, -2), \hdots \}$, then $R_3$, $R_n = R_4$, $R_5 = \{ (5, -1), (5, -2), (5, -3), (5, -4) \}$, $R_6 = \{ (4, -4), (4, -5) \}$, $R = R_7 = \{ (3, -4), (3, -5) \}$. The point $\textbf{z} = (5, -1)$ is the uppermost vertex of $R_5$. For the purpose of this example, let us assume $c_{uw} = 6$, although this should be larger. To find the upper west paired HNEs, the set of HNEs on $r_1$ is the set of all possible candidates for the upper HNE in the pair. To find the second HNE in the pair, we look on $r_{1-6} = r_{-5}$ (i.e. at latitude $i-c_{uw}$) to find a matching HNE at the same longitude with one on $r_1$ and we choose the easternmost one. If none such HNE exists, we repeat the same process on $r_{-6}$, then on $r_{-7}$ and so on. In this example, we find the upper west paired HNEs to be $(-3, 1), (-2, 1)$ and $(-3, -5), (-2, -5)$. Therefore $\textbf{v} = (-2, 1)$ and $\textbf{t} = (2, 1)$. The only HE between $R_4$ and $R_5$ is $(4, -2), (5, -2)$ at latitude $\nu = -2$, so $\textbf{w} = (4, 4)$. Then, if the robot follows $K_1 = E S^6 E N^6 EE$ starting from $\textbf{v}$ it gets to $\textbf{t}$; if the robot follows $K_2 = S^5 N^5 E S^5 N^5 E S^7 N^7$ starting from $\textbf{t}$ it gets to $\textbf{w}$, passing through the green edges from $R_2$ to $R_3$ and from $R_3$ to $R_4$; if the robot follows $K_3 = S^6 E N^6$ starting from $\textbf{w}$ it gets to $\textbf{z}$; if the robot follows $K_4 = N^3 S^3 W NS W NS E$ starting from $\textbf{z}$ it gets to the destination point; $K_5 = E^{13}$. Finally, we remark that if the robot follows $K= K_1 K_2 K_3 K_4 K_5$ starting on any point of $r_1$ strictly at the west of $\textbf{v}$, then it returns on $r_1$ strictly at the west of $\textbf{v}$.} \label{11}
\end{figure}

\noindent
\textbf{4(iii) } We assume there exist the special upper west paired HNEs. We recall the following definitions: let $HE_a, \hdots, HE_b$ be the west ascending chain with $HE_a$ being the upper west pass say on some row $r_i$ and also say that $HE_b$ is on some row $r_t$. Then $c_{uw} = t+p$ is the upper west constant, where $p$ is the parameter of the primary rectangle. The upper west paired HNEs are any pair of HNEs with the same longitude, in the west strip, such that the upper HNE is at latitude $i$, on the same row as the upper west pass, and the lower HNE is at latitude at most $i-c_{uw}$. For the special upper west paired HNEs, we choose the upper west paired HNEs with the uppermost easternmost lower HNE. In this subcase, we assume that there exist the special upper west paired HNEs, with the upper HNE on the row $r_i$ and the lower HNE on the row $r_j$, $j \leq i-c_{uw}$.

Let the point $\textbf{v}$ be the easternmost vertex of the upper HNE of the pair and let the point $\textbf{t}$ be the easternmost vertex of the upper west pass. We pick any HE between the upper infinite column $R_n$ and the finite column $R_{n+1}$ at latitude say $\nu$. In the case that $R_{n+1}$ does not exist, we pick the lowermost HE between the upper infinite column $R_n$ and $R$ at latitude say $\nu$. Let the point $\textbf{w}$ be the vertex in the infinite column $R_n$ at latitude $\nu+i-j$. Then the eastern vertex of $HE_{n-1}$ which has a latitude of at most $t$ by definition is in the column $c_n$ below $\textbf{w}$; indeed, $\nu+i-j \geq \nu + t + p$ and $\nu+p \geq 0$. Finally, let the point $\textbf{z}$ be the uppermost vertex of the finite column $R_{n+1}$ if $R_{n+1}$ exists. In the following argument, we assume that $R_{n+1}$ exists and it will be clear how this also naturally treats the case when $R_n$ is connected to $R$ which contains the destination point. For an illustration of all these definitions in a concrete example, see Figure~\ref{11}. 

In what follows, we will construct $5$ algorithms $K_1, \hdots, K_5$, by inspecting the tertiary rectangle.

We start by constructing a finite algorithm $K_1$ of the form $K_1=\prod_{m=1}^{h_1} S^{\epsilon_m}EN^{\epsilon_m}$, where $\epsilon_m \in \{0, i-j \}$ for all $1 \le m \le h_1$, such that after the robot follows $K_1$ starting from the point $\textbf{v}$ it gets to the point $\textbf{t}$. We make use of the fact that in the west strip at the east of the special upper west paired HNEs at each given longitude at least one of the rows $r_i$ and $r_j$ contains a HE. Clearly, $\epsilon_{h_1}=0$.

We construct a finite algorithm $K_2$ of the form $K_2=(\prod_{m=a+1}^{n-1} S^{k_m}N^{k_m}E)S^{k_n}N^{k_n} $, where $k_m$ is a positive integer for all $a+1 \le m \le n$, such that if the robot starts from the point $\textbf{t}$ and it follows $K_2$, it gets to the point $\textbf{w}$. More specifically, if the robot is in the upper infinite column $R_m$ in the column $c_m$ at the easternmost end of $HE_{m-1}$ and it follows $S^{k_m}N^{k_m}E$ it gets in the upper infinite column $R_{m+1}$ in the column $c_{m+1}$ at the easternmost end of $HE_{m}$, for $a+1 \le m \le n-1$; if the robot is in the upper infinite column $R_n$ in the column $c_n$ at the easternmost end of $HE_{n-1}$ and it follows $S^{k_n}N^{k_n}$ it gets to the point $\textbf{w}$.

We construct an algorithm $K_3=S^{i-j} E N^{i-j}$, such that if the robot starts from $\textbf{w}$ and it follows $K_3$, it gets to the point $\textbf{z}$.

We construct an algorithm $K_4$ of the form $K_4=(\prod_{m=n+1}^{k} N^{k_m}S^{k_m}E^{\epsilon_m})$ $N^{k_{k+1}} \text{ } N^{-k_{k+1}}$, where $\epsilon_m \in \{-1, 1\}$ and $k_m$ is an integer for all $n+1 \le m \le k+1$, such that if the robot starts from the point $\textbf{z}$ and it follows $K_4$, it visits the destination point. More specifically, if the robot is at some specified latitude in the finite column $R_m$ and it follows $N^{k_m}N^{-k_m}E^{\epsilon_m}$, it gets to some specified latitude in the finite column $R_{m+1}$ for $n+1 \le m \le k$, where by convention we write $R_{k+1}$ for $R$. If the robot is at some specified latitude inside $R$ and follows $N^{k_{k+1}}N^{-k_{k+1}}$ it visits the destination point.

We define the algorithm $K_5=E^{|K_4|}.$

We define the algorithm $K =K_1 K_2 K_3 K_4 K_5.$ Note that if the robot is on the row $r_i$ strictly at the west of the point $\textbf{v}$ and it follows $K$ it returns on the row $r_i$ strictly at the west of $\textbf{v}$. Indeed, by examining $K_1, \hdots, K_5$ one by one, we conclude that if the robot starts strictly at the west of $\textbf{v}$, while executing $K$ it can only change its longitude at latitudes $i$ or $j$. Thus, the existence of the special west paired HNEs prevents the robot from reaching a longitude at least that of $\textbf{v}$. In particular, if the robot is on the row $r_i$ strictly at the west of the point $\textbf{v}$ and it follows $K$ it does not meet any VNE, so it is easy to see that it returns back to the row $r_i$. Finally, the only $W$ instructions in $K$ could appear as part of $K_4$, which is followed by $K_5=E^{|K_4|}$ in $K$; therefore if the robot is on the row $r_i$ strictly at the west of the point $\textbf{v}$ and it follows $K$ its longitude does not decrease. If the robot starts at the point $\textbf{v}$ and it follows $K$, then it visits the destination point; this follows directly from the definitions of $K_1, \hdots, K_5$ (see Figure~\ref{11}).

Finally, we construct the algorithm
\[
F = N^i \text{ } MW (i-j, q) \text{ } SME(\mu+\lambda+2q,\mu+\lambda+2q,K),
\]
where $\mu = |MW (i-j, q)|.$
\begin{prop}
For any maze in $F_i$, after the robot follows $X \text{ } F$, it visits the destination point.
\end{prop}
\begin{proof}
We may assume without loss of generality that after the robot follows the algorithm $X \text{ } N^i \text{ } MW (i-j, q)$ it gets on the row $r_i$ at a longitude at most that of the point $\textbf{v}$. By the choice of parameters and by Lemma~\ref{l2}, while the robot follows $SME (\mu+\lambda+2q,\mu+\lambda+2q,K)$, it advances eastwards in the west strip oscillating about the row $r_i$. After defining $K$, we checked that it satisfies the conditions required in order to apply Lemma~\ref{l2}. Therefore, by Lemma~\ref{l2}, the robot gets for the first time to the point $\textbf{v}$ not while executing $K$, but while executing a locomotory move. Immediately afterwards, it follows $K$ and it gets to the destination point. The conclusion follows.
\end{proof}
\begin{figure}[h!]
\centering
\resizebox{0.75\textwidth}{!}{
\begin{tikzpicture}
    [
        dot/.style={circle,draw=black, fill,inner sep=2pt},
        cross/.style={cross out, draw=black, minimum size=2*(#1-\pgflinewidth), inner sep=4pt, outer sep=4pt},
    ]

\foreach \x in {-6,...,6}{
 \foreach \y in {-6,...,6}{
    \node[dot] at (\x,\y){ };
}}

\draw[line width=0.7mm, red] (-6, -6) -- (-6, 6);
\draw[line width=0.7mm, red] (-5, -6) -- (-5, 6);
\draw[line width=0.7mm, red] (-4, -6) -- (-4, 6);
\draw[line width=0.7mm, red] (-3, -6) -- (-3, 6);
\draw[line width=0.7mm, red] (-2, -6) -- (-2, 6);
\draw[line width=0.7mm, red] (-1, -6) -- (-1, 6);
\draw[line width=0.7mm, red] (0, -6) -- (0, 6);
\draw[line width=0.7mm, red] (1, -6) -- (1, 6);

\draw[line width=0.7mm, red] (2, -6) -- (2, -4);
\draw[line width=0.7mm, red] (2, -3) -- (2, 6);

\draw[line width=0.7mm, red] (3, -5) -- (3, -4);
\draw[line width=0.7mm, red] (3, -2) -- (3, 6);

\draw[line width=0.7mm, red] (4, -5) -- (4, -4);
\draw[line width=0.7mm, red] (4, -3) -- (4, 6);

\draw[line width=0.7mm, red] (5, -6) -- (5, -5);
\draw[line width=0.7mm, red] (5, -4) -- (5, -1);
\draw[line width=0.7mm, red] (5, 0) -- (5, 6);

\draw[line width=0.7mm, red] (6, -6) -- (6, 6);

\draw[line width=0.7mm, red] (-2, 5) -- (1, 5);
\draw[line width=0.7mm, red] (3, 5) -- (6, 5);

\draw[line width=0.7mm, red] (4, 4) -- (6, 4);
\draw[line width=0.7mm, red] (-5, 4) -- (-4, 4);

\draw[line width=0.7mm, red] (-1, 3) -- (0, 3);
\draw[line width=0.7mm, red] (2, 3) -- (4, 3);
\draw[line width=0.7mm, red] (5, 3) -- (6, 3);

\draw[line width=0.7mm, red] (-2, 2) -- (-3, 2);
\draw[line width=0.7mm, red] (0, 2) -- (1, 2);
\draw[line width=0.7mm, red] (2, 2) -- (3, 2);

\draw[line width=0.7mm, red] (-6, 1) -- (-5, 1);
\draw[line width=0.7mm, red] (-5, 1) -- (-4, 1);
\draw[line width=0.7mm, red] (-2, 1) -- (-1, 1);
\draw[line width=0.7mm, red] (-3, 1) -- (2, 1);
\draw[line width=0.7mm, red] (4, 1) -- (5, 1);

\draw[line width=0.7mm, red] (0, 0) -- (1, 0);
\draw[line width=0.7mm, red] (4, 0) -- (5, 0);

\draw[line width=0.7mm, red] (-1, -1) -- (0, -1);
\draw[line width=0.7mm, red] (2, -1) -- (4, -1);

\draw[line width=0.7mm, red] (-5, -2) -- (-3, -2);
\draw[line width=0.7mm, red] (-1, -2) -- (0, -2);
\draw[line width=0.7mm, red] (1, -2) -- (2, -2);
\draw[line width=0.7mm, red] (4, -2) -- (5, -2);

\draw[line width=0.7mm, red] (-2, -3) -- (0, -3);

\draw[line width=0.7mm, red] (-6, -4) -- (-3, -4);
\draw[line width=0.7mm, red] (-2, -4) -- (-1, -4);
\draw[line width=0.7mm, red] (4, -4) -- (5, -4);

\draw[line width=0.7mm, red] (-5, -5) -- (-3, -5);
\draw[line width=0.7mm, red] (-1, -5) -- (0, -5);
\draw[line width=0.7mm, red] (3, -5) -- (4, -5);

\draw[line width=0.7mm, red] (-1, -6) -- (0, -6);
\draw[line width=0.7mm, red] (4, -6) -- (5, -6);
\draw[line width=0.7mm, red] (-4, -6) -- (-3, -6);

\draw[line width=1mm, green] (1, 1) -- (2, 1);
\draw[line width=1mm, green] (2, 2) -- (3, 2);
\draw[line width=1mm, green] (3, 3) -- (4, 3);
\draw[line width=1mm, green] (4, 4) -- (6, 4);

\draw node[fill,circle,inner sep=5.5pt,minimum size=2pt] at (-3, 1) {};
\draw node[fill,circle,inner sep=5.5pt,minimum size=2pt] at (2, 1) {};
\draw node[fill,circle,inner sep=5.5pt,minimum size=2pt] at (4, 4) {};
\draw node[fill,circle,inner sep=5.5pt,minimum size=2pt] at (5, -1) {};

\draw (-2.7,1.4) -- node {\textbf{v}} (-2.7,1.4);
\draw (2.4,1) -- node {\textbf{t}} (2.4,1);
\draw (5.4, -1) -- node {\textbf{z}} (5.4, -1);
\draw (4.3, 4.4) -- node {\textbf{w}} (4.3, 4.4);

\draw (3,-4) node[cross] {};

\foreach \x in {-6,...,6}
    \draw (\x,.1) -- node[below,yshift=-1mm] {\x} (\x,-.1);
\foreach \y in {-6,...,6}
    \draw (.1,\y) -- node[below,xshift=-3mm, yshift=3mm] {\y} (-.1,\y);
\draw[->,line width=0.15mm] (0,-6.5) -- (0,6.5);
\draw[->,line width=0.15mm] (-6.5,0) -- (6.5,0);
\end{tikzpicture}
}
\caption{\textbf{Part III, Case (4)(iv).} We assume that there do not exist some special upper west paired HNEs and there exists an upper west pipe on the row $r_i$. We assume that there are no VEs removed other than the ones shown in the figure, so $a=1$ and $b= 5$. We have $r_i = r_1$ with the upper west pipe $\{ (-5, 1), (-4, 1), (-3, 1) \}$. The points $\textbf{v}$, $\textbf{t}$, $\textbf{w}$ and $\textbf{z}$ are marked on the figure and the west ascending chain is coloured green. We also have $HE_{special} = \{ (4, -2), (5, -2) \}$, $d=6$ and let us assume for this example that $c_{uw} = 6$, though this value should be larger. Then, if the robot follows $K_1 = E^5$ starting from $\textbf{v}$ it gets to $\textbf{t}$; if the robot follows $K_2 = (S^5N^5E)^2 \text{ } (S^7N^7E) \text{ } W$ starting from $\textbf{t}$ it gets to $\textbf{w}$; if the robot follows $K_3 = S^6 E N^6$ starting from $\textbf{w}$ it gets to $\textbf{z}$;  if the robot follows $K_4 = (N^4 S^4 W) (NSW) NS$ starting from $\textbf{z}$ it gets to the destination point; $K_5 = E^{15}$. We define $K=K_1 K_2 K_3 K_4 K_5$ and note that if the robot follows $K$ starting from $\textbf{v}$ it visits the destination point, but if the robot follows $K$ starting on $r_i = r_1$ strictly at the west of $\textbf{v}$, it returns on $r_i$ strictly at the west of $\textbf{v}$.} \label{12}
\end{figure}

\noindent
\textbf{4(iv) } We assume that there do not exist some special upper west paired HNEs and there exists an upper west pipe on the row $r_i$ which contains the upper west pass. The upper west pipe is the west pipe (the easternmost configuration of a HE followed by a HNE) on the row $r_i$. Let the point $\textbf{v}$ in the west strip be the easternmost vertex of the HNE of the upper west pipe. Let the point $\textbf{t}$ be the easternmost vertex of the upper west pass. Consider the finite sequence of HEs in the west ascending chain $HE_a, HE_{a+1}, \hdots , HE_b$. Let the point $\textbf{w}$ in the upper infinite column $R_n$ in $c_n$ be the westernmost vertex of $HE_n$. Let $HE_{special}$ be a HE between the upper infinite column $R_n$ and the finite column $R_{n+1}$. As in case \textbf{4(iii)}, if $R_{n+1}$ does not exist, let $HE_{special}$ be the lowermost HE between $R_n$ and $R$. Let the constant $d$ be the difference in latitude between $HE_n$ and $HE_{special}$, with $d \geq 0$ from the definition of the upper west pass which is above all passes and special vertices in the obstacle strip. Let $\textbf{z}$ be the uppermost point in the finite column $R_{n+1}$ if $R_{n+1}$ exists. In the following argument, we assume that $R_{n+1}$ exists and it will be clear how this also naturally treats the case when $R_n$ is connected to $R$ which contains the destination point.

In what follows, we will construct $5$ algorithms $K_1, \hdots, K_5$, by inspecting the tertiary rectangle.

We start by constructing the algorithm $K_1=(WS^{c_{uw}}EN^{c_{uw}})^{h_1} E^{h_2}$, where $h_1$ and $h_2$ are positive integers, such that if the robot starts from the point $\textbf{v}$ and follows $K_1$ it gets to the point $\textbf{t}$. We make use of the fact that in the west strip at each given longitude at least one of the rows $r_i$ and $r_j$, $j=i-c_{uw}$ contains a HE. We also make use of the fact that in the west strip the section of the row $r_i$ at the east of the upper west pipe is the complement of a path, followed by a path (which is nonempty from the existence of the upper west pass). However, we remark that if the robot starts on $r_i$ strictly at the west of $\textbf{v}$ and it follows $K_1$, it always remains strictly at the west of $\textbf{v}$, due to the HNE of the west pipe and the fact that there are no VNEs at the west of $\textbf{v}$.

We construct the algorithm $K_2=(\prod_{m=a+1}^{n} S^{k_m}N^{k_m}E)W $, where $k_m$ is a positive integer for all $a+1 \le m \le n$, such that if the robot starts from the point $\textbf{t}$ and follows $K_2$ it gets to the point $\textbf{w}$. More specifically, if the robot is in the upper infinite column $R_m$ in the column $c_m$ at the easternmost point of $HE_{m-1}$ and follows $S^{k_m}N^{k_m}E$, it gets in the upper infinite column $R_{m+1}$ in the column $c_{m+1}$ at the easternmost end of the $HE_{m}$, for $a+1 \le m \le n$. After the robot follows the last instruction in the product, $S^{k_m}N^{k_m}E$, it gets to $c_{n+1}$ at the easternmost point of $HE_n$ and so after it follows the last instruction in $K_2$, that is $W$, the robot gets to the point $\textbf{w}$.

We define the algorithm $K_3=S^d E N^d$, such that if the robot starts from $\textbf{w}$ and it follows $K_3$, it gets to the point $\textbf{z}$. However, we remark that if the robot starts on $r_i$ strictly at the west of $\textbf{v}$ and it follows $K_2 \text{ } K_3$, it always remains strictly at the west of $\textbf{v}$. Indeed, while the robot follows $K_2$ starting strictly at the west of $\textbf{v}$, the HNE of the west pipe prevents it from visiting longitudes greater than that of $\textbf{v}$. Hence, the robot could only potentially get to a large longitude by reaching $\textbf{v}$ after it follows $K_3$; however, this is impossible as the last instruction in $K_2$ is $W$.

We construct the algorithm $K_4=(\prod_{m=n+1}^{k} N^{k_m}N^{-k_m}E^{\epsilon_m})N^{k_{k+1}}N^{-k_{k+1}}$, where $\epsilon_m \in \{-1, 1\}$ and $k_m$ is an integer for all $n+1 \le m \le k+1$, such that if the robot starts from the point $\textbf{z}$ and follows $K_4$ it passes through the destination point. More specifically if the robot is at some specified latitude in the finite column $R_i$ and it follows $N^{k_i}N^{-k_i}E^{\epsilon_i}$, it gets to some specified latitude in the finite column $R_{i+1}$ for $n+1 \le i \le k$, where by convention we write $R_{k+1}$ for $R$. If the robot is at some specified latitude in $R$ and it follows $N^{k_{k+1}}N^{-k_{k+1}}$ it passes through the destination point. However, we remark that if the robot starts on $r_i$ strictly at the west of $\textbf{v}$ and it follows $K_4$ it always remains strictly at the west of $\textbf{v}$, as the robot follows the $E$ instructions at latitude $i$ and the HNE of the west pipe prevents it from visiting longitudes greater than that of $\textbf{v}$.

We finally construct the algorithm $K_5=E^{|K_4|+1}$ and note that if the robot starts on $r_i$ strictly at the west of $\textbf{v}$ and it follows $K_5$ it always remains strictly at the west of $\textbf{v}$.

We define the algorithm $K =K_1 K_2 K_3 K_4 K_5.$ Note that if the robot starts on the row $r_i$ strictly at the west of the point $\textbf{v}$ and it follows $K$ then it returns on the row $r_i$ strictly at the west of $\textbf{v}$. Indeed, the last part follows by the remarks we made on $K_1, \hdots, K_5$ individually and the first part follows from the fact that the robot does not meet any VNEs if it starts on the row $r_i$ strictly at the west of $\textbf{v}$ and it follows $K$. If the robot starts at the point $\textbf{v}$ and it follows $K$, then it visits the destination point; this follows directly from the definitions of $K_1, \hdots, K_5$. Finally, we claim that if the robot starts on the row $r_i$ strictly at the west of the point $\textbf{v}$ and it follows $K$, its longitude does not decrease. Indeed, the only $W$ instructions in $K$ occur either in $K_4$, which is followed by $K_5$ specifically designed to negate them or as the last instruction in $K_2$, which is preceded by an $E$ instruction. Therefore, the claim holds (see Figure~\ref{12}).

Finally, we define the algorithm 
\[
F = N^i \text{ } MW (c_{uw}, q) \text{ } SME (\mu+\lambda+2q,\mu+\lambda+2q,K),
\]
where $\mu = |MW (c_{uw}, q)|.$
\begin{prop}
For any maze in $F_i$, after the robot follows $X \text{ } F$, it visits the destination point.
\end{prop}
\begin{proof}
We may assume without loss of generality that after the robot follows $X N^i$ it gets in the west strip on the row $r_i$. While the robot follows the algorithm $MW (c_{uw}, q)$ it gets on the row $r_i$ at $\textbf{v}$ or to the west of $\textbf{v}$. By the choice of parameters and by Lemma~\ref{l2}, if the robot is in the west strip on the row $r_i$ at the west of the point $\textbf{v}$ and it follows $SME (\mu+\lambda+2q,\mu+\lambda+2q,K)$, it advances eastwards oscillating about the row $r_i$. While the robot is on the row $r_i$ strictly at the west of $\textbf{v}$ and it follows $K$, it remains on the row $r_i$ strictly at the west of $\textbf{v}$. After defining $K$, we checked that it satisfies the conditions required in order to apply Lemma~\ref{l2}. Finally, the robot reaches the point $\textbf{v}$ not while executing $K$, but while executing a locomotory move in $SME$. Immediately afterwards, the robot follows $K$ and it gets to the destination point. The conclusion follows.
\end{proof}
\begin{figure}
\centering
\resizebox{0.5\textwidth}{!}{
\begin{tikzpicture}
    [
        dot/.style={circle,draw=black, fill,inner sep=2pt},
        cross/.style={cross out, draw=black, minimum size=2*(#1-\pgflinewidth), inner sep=4pt, outer sep=4pt},
    ]

\foreach \x in {-4,...,4}{
 \foreach \y in {-4,...,4}{
    \node[dot] at (\x,\y){ };
}}

\draw[line width=0.7mm, red] (-4, -4) -- (-4, 4);
\draw[line width=0.7mm, red] (-3, -4) -- (-3, 4);
\draw[line width=0.7mm, red] (-2, -4) -- (-2, 4);
\draw[line width=0.7mm, red] (-1, -4) -- (-1, -3);
\draw[line width=0.7mm, red] (-1, -2) -- (-1, 4);
\draw[line width=0.7mm, red] (0, -4) -- (0, -2);
\draw[line width=0.7mm, red] (0, -1) -- (0, 4);

\draw[line width=0.7mm, red] (1, -2) -- (1, 4);

\draw[line width=0.7mm, red] (2, -3) -- (2, -1);
\draw[line width=0.7mm, red] (2, 0) -- (2, 4);

\draw[line width=0.7mm, red] (3, -4) -- (3, 4);
\draw[line width=0.7mm, red] (4, -4) -- (4, 4);

\draw[line width=0.7mm, red] (0, 0) -- (2, 0);
\draw[line width=0.7mm, red] (1, 1) -- (3, 1);
\draw[line width=0.7mm, red] (-1, -1) -- (0, -1);
\draw[line width=0.7mm, green] (-2, 2) -- (0, 2);
\draw[line width=0.7mm, green] (0, 3) -- (3, 3);
\draw[line width=0.7mm, red] (-1, 1) -- (0, 1);
\draw[line width=0.7mm, red] (1, -2) -- (2, -2);
\draw[line width=0.7mm, red] (3, -1) -- (4, -1);
\draw[line width=0.7mm, red] (-4, -3) -- (-2, -3);
\draw[line width=0.7mm, red] (-4, -4) -- (-1, -4);

\draw node[fill,circle,inner sep=5.5pt,minimum size=2pt] at (-2, 2) {};
\draw node[fill,circle,inner sep=5.5pt,minimum size=2pt] at (-4, 0) {};
\draw node[fill,circle,inner sep=5.5pt,minimum size=2pt] at (-1, -3) {};

\draw (-1.7,2.4) -- node {\textbf{v}} (-1.7,2.4);
\draw (-1, -2.6) -- node {\textbf{z}} (-1, -2.6);

\draw (2,-3) node[cross] {};

\foreach \x in {-4,...,4}
    \draw (\x,.1) -- node[below,yshift=-1mm] {\x} (\x,-.1);
\foreach \y in {-4,...,4}
    \draw (.1,\y) -- node[below,xshift=-3mm, yshift=3mm] {\y} (-.1,\y);
\draw[->,line width=0.15mm] (0,-4.5) -- (0,4.5);
\draw[->,line width=0.15mm] (-4.5,0) -- (4.5,0);
\end{tikzpicture}
}
\caption{\textbf{Part III, Case (4)(v).} There does not exist a magical west row, there does not exist a special almost empty west row, there does not exist an upper west pipe, there do not exist the special upper west paired HNEs, but there exists an upper west cutoff. We assume that there are no VEs removed other than the ones shown in the figure. Then the row $r_i = r_2$ is the complement of a path in the west strip and all the rows $r_k$ with $k\le j=i - c_{uw} \le -p$ are paths in the west strip and indeed paths in the entire maze. For the purpose of this example, we can take $c_{uw}$ to be any large constant, say $c_{uw}=100$. The points $\textbf{v}$ and $\textbf{z}$ are marked on the figure, $z= -3$, $j=i-c_{uw}=-98$ and $\textbf{w} = (-2, -103)$. Let us see what is the path of the robot as it follows $F = N^2 (S^{100} E N^{200} S^{100} W)^4 N^{105} K$ starting from $(-4, 0)$, where $K$ is any algorithm that takes the robot from $\textbf{v}$ to the destination point. When the robot follows $S^{100} E N^{200} S^{100} W$ starting from $(-4, 2)$, it first reaches a row which is a path after it executes $S^{100}$, so its longitude increases by $1$ after it executes $S^{100} E$; so after the robot executes $S^{100} E N^{200} S^{100}$ it is back on $r_2=r_i$ with its latitude increased by one, at $(-3, 2)$; the $W$ instruction at the end does not change the longitude of the robot, as $r_2$ is the complement of a path in the west strip. Similarly, after the robot follows $S^{100} E N^{200} S^{100} W$ starting from $(-3, 2)$ it gets to $\textbf{v} = (-2, 2)$. After the robot follows $S^{100} E N^{200} S^{100} W$ starting from $\textbf{v}$, it enters the lower infinite column on $c_{a+1}$: after $S^{100} E$ it is at $(a+1, j) = (-1, -98)$; after the robot follows $S^{100} E N^{200} S^{100}$, it is at $(-1, -103)$; finally, after the robot follows $S^{100} E N^{200} S^{100} W$, it is at $\textbf{w} = (-2, -103)$. Similarly, we can see that after the robot follows each subsequent instance of $S^{100} E N^{200} S^{100} W$ starting at $\textbf{w}$, it returns to $\textbf{w}$. After the robot follows enough instances of $S^{100} E N^{200} S^{100} W$ to reach $\textbf{w}$, it follows $N^{i+c_{uw}-z} = N^{105}$ and it reaches $\textbf{v}$; immediately afterwards, the robot follows $K$ and it reaches the destination point.} \label{13}
\end{figure}

\noindent
\textbf{4(v) } We assume that there does not exist a magical west row, there does not exist a special almost empty west row, there does not exist an upper west pipe, there do not exist the special upper west paired HNEs, but there exists an upper west cutoff. We recall that the upper west cutoff is the easternmost HNE in the west strip on the row $r_i$ which contains the upper west pass. Then the row $r_i$ is the complement of a path in the west strip and all the rows $r_k$ with $k\le j=i - c_{uw} \le -p$ are paths in the west strip and indeed paths in the entire maze (from the non existence of the special upper west paired HNEs). Let $\textbf{v} = (a, i)$ be the easternmost vertex of the row $r_i$ in the west strip. Let $\textbf{z} = (a+1, z)$ be the uppermost vertex of the westernmost lower infinite column in the column $c_{a+1}$. Let $\textbf{w} = (a, z-c_{uw})$. By inspecting the tertiary rectangle, we can construct an algorithm $K$ that takes the robot from $\textbf{v}$ to the destination point.

We define the algorithm
\[
F := N^i \text{ } (S^{i-j} E N^{2i-2j} S^{i-j} W)^{\lambda+q} \text{ } N^{i+c_{uw}-z} \text{ } K.
\]
\textbf{Claim. } For any maze in $F_i$, after the robot follows $X \text{ } F$, it visits the destination point.
\begin{proof}
We may assume without loss of generality that after the robot follows $X \text{ } N^i$ it gets in the west strip on the row $r_i$. By the choice of exponents, while the robot follows $(S^{i-j} E N^{2i-2j} S^{i-j} W)^{\lambda+q}$ it gets to the point $\textbf{w}$ and remains stuck there. Indeed, while the robot follows each instance of $S^{i-j} E N^{2i-2j} S^{i-j} W$, it advances one unit to the east, oscillating about the row $r_i$ until it gets to $\textbf{v}$. Immediately afterwards, it follows $S^{i-j} E N^{2i-2j} S^{i-j} W$ and gets to $\textbf{w}$. After the robot gets to $\textbf{w}$, after each other instance of $S^{i-j} E N^{2i-2j} S^{i-j} W$, the robot gets back to $\textbf{w}$. If the robot starts at $\textbf{w}$ and it follows $N^{i+c_{uw}-z}$, it gets to $\textbf{v}$. Therefore, after the robot follows $X \text{ } F$, it gets to the destination point. The conclusion follows.
\end{proof}
\begin{figure}
\centering
\resizebox{0.5\textwidth}{!}{
\begin{tikzpicture}
    [
        dot/.style={circle,draw=black, fill,inner sep=2pt},
        cross/.style={cross out, draw=black, minimum size=2*(#1-\pgflinewidth), inner sep=4pt, outer sep=4pt},
    ]

\foreach \x in {-4,...,4}{
 \foreach \y in {-4,...,4}{
    \node[dot] at (\x,\y){ };
}}

\draw[line width=0.7mm, red] (-4, -4) -- (-4, 4);
\draw[line width=0.7mm, red] (-3, -4) -- (-3, 4);
\draw[line width=0.7mm, red] (-2, -4) -- (-2, 4);
\draw[line width=0.7mm, red] (-1, -4) -- (-1, 4);

\draw[line width=0.7mm, red] (0, -2) -- (0, 4);

\draw[line width=0.7mm, red] (1, -1) -- (1, 4);
\draw[line width=0.7mm, red] (1, -2) -- (1, -3);

\draw[line width=0.7mm, red] (2, -2) -- (2, 4);
\draw[line width=0.7mm, red] (2, -3) -- (2, -4);

\draw[line width=0.7mm, red] (3, -1) -- (3, 4);
\draw[line width=0.7mm, red] (3, -2) -- (3, -4);

\draw[line width=0.7mm, red] (4, -4) -- (4, 4);

\draw[line width=0.7mm, red] (3, 3) -- (4, 3);

\draw[line width=0.7mm, red] (1, 2) -- (3, 2);

\draw[line width=0.7mm, red] (-4, 1) -- (4, 1);

\draw[line width=0.7mm, red] (-4, 1) -- (4, 1);

\draw[line width=0.7mm, red] (-4, 0) -- (4, 0);

\draw[line width=0.7mm, red] (-3, -1) -- (-2, -1);
\draw[line width=0.7mm, red] (0, -1) -- (3, -1);

\draw[line width=0.7mm, red] (-4, -2) -- (-3, -2);
\draw[line width=0.7mm, red] (-2, -2) -- (-1, -2);
\draw[line width=0.7mm, red] (0, -2) -- (2, -2);

\draw[line width=0.7mm, red] (-3, -3) -- (-2, -3);
\draw[line width=0.7mm, red] (3, -3) -- (4, -3);

\draw[line width=0.7mm, red] (-3, -4) -- (-1, -4);

\draw[line width=0.7mm, green] (-1, 0) -- (0, 0);

\draw node[fill,circle,inner sep=5.5pt,minimum size=2pt] at (3, 2) {};
\draw node[fill,circle,inner sep=5.5pt,minimum size=2pt] at (1, -1) {};
\draw node[fill,circle,inner sep=5.5pt,minimum size=2pt] at (-3, 0) {};

\draw (3.3, 2.3) -- node {\textbf{v}} (3.3, 2.3);
\draw (1.3, -0.7) -- node {\textbf{w}} (1.3, -0.7);

\draw (1,-3) node[cross] {};

\foreach \x in {-4,...,4}
    \draw (\x,.1) -- node[below,yshift=-1mm] {\x} (\x,-.1);
\foreach \y in {-4,...,4}
    \draw (.1,\y) -- node[below,xshift=-3mm, yshift=3mm] {\y} (-.1,\y);
\draw[->,line width=0.15mm] (0,-4.5) -- (0,4.5);
\draw[->,line width=0.15mm] (-4.5,0) -- (4.5,0);
\end{tikzpicture}
}
\caption{\textbf{Part III, Case (4)(vi).} We assume there exists an upper west HNE on some row $r_j$. We further assume there does not exist a magical west row, there does not exist a magical east row and there does not exist an upper west cutoff and that there are no VEs removed other than the ones shown in the figure. Then all the rows $r_m$ with $i\le m < j$ are paths in the maze. In this example, the upper west pass is coloured green, the uppermost westernmost VNE is $\{ (1, -2), (1, -1) \}$, the upper west HNE is $\{(3, 2), (4, 2) \}$ and so $j=2$. The vertices $\textbf{v}$ and $\textbf{w}$ are marked on the figure. We can take $K = S^3 (WS)^2$, so if the robot follows $K$ starting from $\textbf{v}$ it visits the destination point. Let us observe how the robot follows $F = (ES^3 N^3)^{10}K$ starting from $(-3, 0)$. As long as the robot is in the west strip, each instance of $ES^3 N^3$ increases its longitude by one. Eventually, the robot gets to $(0, 0)$. After that, the robot follows $S^3 N^3$ and it gets to $(0,1)$. Considering that every row at latitude between $i=0$ and $j=2$ is a path in the maze, every further instance of $ES^3 N^3$ increases the longitude of the robot by one, until it arrives at $\textbf{v} = (3, 2)$, as its latitude is determined by the uppermost VNEs at the west of $\textbf{v}$. Once the robot reaches $\textbf{v}$, we can see that after each instance of $ES^3 N^3$, the robot returns to $\textbf{v}$. Finally, the robot follows $K$ and it visits the destination point.} \label{14}
\end{figure}

\noindent
\textbf{4(vi) } We assume there exists an upper west HNE on some row $r_j$. We recall that the upper west HNE is the lowermost westernmost HNE at the north-east of the uppermost westernmost VNE. We further assume there does not exist a magical west row, there does not exist a magical east row and there does not exist an upper west cutoff. Then all the rows $r_m$ with $i\le m < j$ are paths in the maze (from the minimality of $j$ and the non-existence of a magical east row). Let $\textbf{v}$ be the western vertex of the upper west HNE. Let $\textbf{w} = (x_w, y_w)$ be the upper vertex of the uppermost westernmost VNE. Then $\textbf{v}$ is at the east of $\textbf{w}$. By inspecting the tertiary rectangle, we construct an algorithm $K$ which takes the robot from $\textbf{v}$ to the destination point (see Figure~\ref{14}). 

We define the algorithm
\[
F= N^i (ES^{j-y_w}N^{j-y_w})^{\lambda+q}K.
\]
\textbf{Claim. } For any maze in $F_i$, after the robot follows $X \text{ } F$, it visits the destination point.
\begin{proof}
We may assume without loss of generality that after the robot follows $X N^i$ it gets in the west strip on the row $r_i$. In the west strip, while the robot follows $ES^{j-y_w}N^{j-y_w}$ it advances eastwards oscillating about the row $r_i$. In the obstacle strip, while the robot follows $ES^{j-y_w}N^{j-y_w}$ it advances eastwards, potentially increasing its latitude as it meets VNEs. It eventually gets on the row $r_j$ and remains stuck at the point $\textbf{v}$. Therefore, after the robot follows $X \text{ } F$ it gets to the destination point. The conclusion follows (see Figure~\ref{14}). 
\end{proof}
\begin{figure}
\centering
\resizebox{0.5\textwidth}{!}{
\begin{tikzpicture}
    [
        dot/.style={circle,draw=black, fill,inner sep=2pt},
        cross/.style={cross out, draw=black, minimum size=2*(#1-\pgflinewidth), inner sep=4pt, outer sep=4pt},
    ]

\foreach \x in {-5,...,4}{
 \foreach \y in {-4,...,4}{
    \node[dot] at (\x,\y){ };
}}

\draw[line width=0.7mm, red] (-5, -4) -- (-5, 4);
\draw[line width=0.7mm, red] (-4, -4) -- (-4, 4);
\draw[line width=0.7mm, red] (-3, -4) -- (-3, 4);
\draw[line width=0.7mm, red] (-2, -4) -- (-2, 4);
\draw[line width=0.7mm, red] (-1, -4) -- (-1, 4);

\draw[line width=0.7mm, red] (0, -2) -- (0, 4);

\draw[line width=0.7mm, red] (1, -1) -- (1, 4);
\draw[line width=0.7mm, red] (1, -2) -- (1, -3);

\draw[line width=0.7mm, red] (2, -2) -- (2, 4);
\draw[line width=0.7mm, red] (2, -3) -- (2, -4);

\draw[line width=0.7mm, red] (3, -1) -- (3, 4);
\draw[line width=0.7mm, red] (3, -2) -- (3, -4);

\draw[line width=0.7mm, red] (4, -4) -- (4, 4);

\draw[line width=0.7mm, red] (-5, 4) -- (4, 4);

\draw[line width=0.7mm, red] (-5, 3) -- (4, 3);

\draw[line width=0.7mm, red] (-5, 2) -- (4, 2);

\draw[line width=0.7mm, red] (-5, 1) -- (4, 1);

\draw[line width=0.7mm, red] (-5, 0) -- (4, 0);

\draw[line width=0.7mm, red] (-5, -1) -- (-4, -1);

\draw[line width=0.7mm, red] (-3, -1) -- (-2, -1);
\draw[line width=0.7mm, red] (-1, -1) -- (3, -1);

\draw[line width=0.7mm, red] (-4, -2) -- (-3, -2);
\draw[line width=0.7mm, red] (-2, -2) -- (-1, -2);
\draw[line width=0.7mm, red] (0, -2) -- (2, -2);

\draw[line width=0.7mm, red] (-5, -3) -- (-4, -3);
\draw[line width=0.7mm, red] (-3, -3) -- (-2, -3);
\draw[line width=0.7mm, red] (3, -3) -- (4, -3);

\draw[line width=0.7mm, red] (-3, -4) -- (-1, -4);

\draw[line width=0.7mm, green] (-1, 0) -- (0, 0);

\draw node[fill,circle,inner sep=5.5pt,minimum size=2pt] at (-2, -1) {};
\draw node[fill,circle,inner sep=5.5pt,minimum size=2pt] at (0, -2) {};
\draw node[fill,circle,inner sep=5.5pt,minimum size=2pt] at (-2, 0) {};
\draw node[fill,circle,inner sep=5.5pt,minimum size=2pt] at (1, -2) {};
\draw node[fill,circle,inner sep=5.5pt,minimum size=2pt] at (-4, -1) {};

\draw (-1.7, -0.7) -- node {\textbf{v}} (-1.7, -0.7);
\draw (-3.7, -0.7) -- node {\textbf{v'}} (-3.7, -0.7);
\draw (0.3, -1.7) -- node {\textbf{w}} (0.3, -1.7);
\draw (-1.7, 0.3) -- node {\textbf{t}} (-1.7, 0.3);
\draw (1.3, -1.7) -- node {\textbf{z}} (1.3, -1.7);

\draw (1,-3) node[cross] {};

\foreach \x in {-5,...,4}
    \draw (\x,.1) -- node[below,yshift=-1mm] {\x} (\x,-.1);
\foreach \y in {-4,...,4}
    \draw (.1,\y) -- node[below,xshift=-4mm, yshift=3mm] {\y} (-.1,\y);
\draw[->,line width=0.15mm] (0,-4.5) -- (0,4.5);
\draw[->,line width=0.15mm] (-5.5,0) -- (4.5,0);
\end{tikzpicture}
}
\caption{\textbf{Part III, Case (4)(vii).} We assume there does not exist a magical west row, there does not exist a magical east row, there does not exist an upper west cutoff, there does not exist an upper west HNE, but there does exist a special west pipe on some row $r_j$. We assume that there are no VEs removed other than the ones shown in the figure. The upper west pass is coloured green and it is on the row $r_i=r_0$. From the assumptions, it follows that for every $m \geq i$, the row $r_m$ is a path in the maze. The special west pipe is $\{ (-3, -1), (-2, -1), (-1, -1) \}$ on $r_j = r_{-1}$. We take $R_{n+1}$ to be $\{(1, -2), (1, -3)\}$, accessible from $R_n = \{(0, -2), (0, -1), \hdots \}$ via $HE_{special} = \{(0, -2), (1, -2)\}$ on $r_{\gamma} = r_{-2}$. Then, if the robot follows $K_1 = NE^2S^3N^3W^2S$ starting from $\textbf{v}$, it gets to $\textbf{t}$ passing from $\textbf{w}$; however, note that if the robot follows $K_1$ starting from $\textbf{v'}$ (which is the eastern vertex of the HE of the ``fake west pipe'' $\{(-5, -1), (-4, -1), (-3, -1) \}$ on $r_j$ strictly at the west of $\textbf{v}$), it returns to $\textbf{v'}$. If the robot follows $K_2 = E^3 W S^2 E N^2$ starting from $\textbf{t}$, it gets to $\textbf{z}$; however, if the robot follows $K_2$ starting from $\textbf{v'}$ it gets back to $\textbf{v'}$; in general, we are certain that if the robot follows $K_2$ starting from $\textbf{v'}$ it either gets back to $\textbf{v'}$ or to the western neighbour of $\textbf{v'}$. If the robot follows $K_3 = NSW$ starting from $\textbf{z}$ it visits the destination point. In this case, $K_4 = E^4$. Therefore, if the robot follows $K_3  K_4$ starting either from $\textbf{v'}$ or from the western neighbour of $\textbf{v'}$, it gets to $\textbf{v'}$.} \label{15}
\end{figure}

\noindent
\textbf{4(vii) } We assume there does not exist a magical west row, there does not exist a magical east row, there does not exist an upper west cutoff, there does not exist an upper west HNE, but there does exist a special west pipe on some row $r_j$. We recall that the special west pipe is the west pipe (the easternmost configuration in the west strip of a HE followed by a HNE) on the smallest row that has a west pipe with respect to the standard well order on $\mathbb{Z}$. Then all the rows $r_m$ with $m \geq i$ are paths in the maze (from the non existence of an upper west HNE and the non existence of a magical east row). 

Let $\textbf{v} = (x_v,j)$ be the eastern vertex of the HE of the special west pipe. Let $\textbf{w} = (a+1,y_w)$ be the lowermost vertex of the westernmost upper infinite column $R_{a+1}$. Let $\textbf{t} = (x_v,i)$ be the vertex at the intersection between the column $c_{x_v}$ and the row $r_i$. Let $\textbf{z} = (n+1, y_z)$ be the uppermost vertex of the finite column $R_{n+1}$ or the uppermost vertex of the lower infinite column $R_{n+1} = R$ that contains the destination point. The special case that the destination point is in the upper infinite column $R_{n+1} = R$ is much more easy and we will make a note on how to solve it before defining the finish algorithm $F$. Let $HE_{special}$ be a HE on some row $r_{\gamma}$ between the upper infinite column $R_n$ and the finite column $R_{n+1}$. Let $\textbf{v'}$ be the eastern vertex of the HE of any ``fake west pipe'', i.e. a configuration in the west strip on $r_j$ that is formed by a HE followed by a HNE, strictly at the west of the special west pipe (see Figure~\ref{15}).

We define the algorithm $K_1=N^{i-j}E^{a+1-x_v}S^{2i-j-y_w}N^{2i-j-y_w}W^{a+1-x_v}S^{i-j}$ with the property that if the robot starts from $\textbf{v}$ and follows $K_1$ it passes through the point $\textbf{w}$ and gets to the point $\textbf{t}$. However, if the robot starts at $\textbf{v'}$ and it follows $K_1$ then it returns at $\textbf{v'}$. The second statement follows from the fact that the robot moves at every instruction in $K_1$: indeed, while the robot executes $N^{i-j}$ starting from $\textbf{v'}$, it is in the west strip which contains no VNEs, so it changes its latitude to $i$; considering that $r_i$ is a path in the maze, when the robot continues to follow $E^{a+1-x_v}$, its longitude increases by exactly $a+1-x_v$ which is the exact difference in longitude between $\textbf{v}$ and the westernmost column in the obstacle strip, $c_{a+1}$; as $\textbf{v'}$ is strictly at the west of $\textbf{v}$, we conclude that after the robot follows $N^{i-j}E^{a+1-x_v}$ starting from $\textbf{v'}$, it is still in the west strip on the row $r_i$ which is a path in the maze; hence, if the robot follows $K_1$ starting from $\textbf{v'}$, it gets back to $\textbf{v'}$. Similarly, we can show the first statement about $K_1$, that if the robot starts from $\textbf{v}$ and follows $K_1$ it gets to the point $\textbf{t}$; in this case, we note that the only instructions in $K_1$ that do not change the position of the robot are instructions of type $S$ from the group $S^{2i-j-y_w}$ that occur immediately after the robot reaches $\textbf{w}$ (see Figure~\ref{15}).

We define the algorithm $K_2=E^{n+1-x_v}WS^{i-\gamma}EN^{i-\gamma}$ such that if the robot starts from $\textbf{t}$ and follows $K_2$ it gets to the point $\textbf{z}$. This is clear as the robot starts on $r_i$ which is a path, so after it follows $E^{n+1-x_v}W$ it gets at the point $(n, i)$ and so after it follows $K_2$ it is in $R_{n+1}$; moreover, as the upper west pass at latitude $i$ is above all the passes in the obstacle strip and so, in this case, also above all the VNEs, the robot actually gets to $\textbf{z}$ in $R_{n+1}$ after it follows $K_2$ starting from $\textbf{t}$. However, if the robot follows $K_2$ starting from $\textbf{v'}$, it does not move after it follows $E^{n+1-x_v}$ and its longitude decreases by $1$ after it follows $E^{n+1-x_v}W$. Hence, if the robot follows $K_2$ starting from $\textbf{v'}$, it either gets back to $\textbf{v'}$ or it gets to the western neighbour of $\textbf{v'}$ (see Figure~\ref{15}).

By inspecting the tertiary rectangle, we construct the algorithm $K_3$ of the form $K_3=(\prod_{m=n+1}^{k} N^{k_m}N^{-k_m}E^{\epsilon_m})N^{k_{k+1}}N^{-k_{k+1}}$, where $\epsilon_m \in \{-1, 1\}$ and $k_m$ is an integer for all $n+1 \le m \le k+1$, such that if the robot starts from the point $\textbf{z}$ and follows $K_3$ it passes through the destination point. More specifically, if the robot is at some specified latitude in the finite column $R_m$ and follows $N^{k_m}N^{-k_m}E^{\epsilon_m}$ it gets to some specified latitude in the finite column $R_{m+1}$ for $n+1 \le m \le k$, where by convention we write $R_{k+1}$ for $R$. If the robot is at some specified latitude inside $R$ and it follows $N^{k_{k+1}}N^{-k_{k+1}}$, it visits the destination point.

We construct the algorithm $K_4=E^{|K_3|+1}$. We note that from the structure of a fake west pipe and its position in the west strip, if the robot starts either at $\textbf{v'}$ or at the western neighbour of $\textbf{v'}$ and it follows $K_3 K_4$, it gets to $\textbf{v'}$.

We define the algorithm $K=K_1 K_2 K_3 K_4$ with the property that if the robot starts at $\textbf{v}$ and it follows $K$, it passes through the destination point. However, if the robot starts at $\textbf{v'}$ and it follows $K$, it gets back to $\textbf{v'}$. In the special case when $\textbf{z}$ does not exist and so the destination point $(n+1, \delta)$ is in the upper infinite column $R_{n+1}=R$ we define $K_2' = E^{n+1 - x_v} N^{\delta - i} S^{\delta - i}$. In this case we define $K = K_1 K_2'$ instead and we note that, as before, if the robot starts at $\textbf{v}$ and it follows $K$, it passes through the destination point; moreover, if the robot starts at $\textbf{v'}$ and it follows $K$, it gets back to $\textbf{v'}$.

We recall the algorithm $WPF(a,e):=(E^eWS^aEN^a)^e$, defined in the case \textbf{2(ii)}. Finally, we define the algorithm
\[
F= N^i \text{ } W^{\lambda-x_v} \text{ } S^{i-j} \text{ } (WPF(j-i, 2\lambda+q)KN^{i-j}ES^{i-j})^{2 \lambda+q}.
\]
\textbf{Claim. } For any maze in $F_i$, after the robot follows $X \text{ } F$, it visits the destination point.
\begin{proof}
We may assume without loss of generality that after the robot follows $X \text{ } N^i \text{ } W^{\lambda - x_v} \text{ } S^{i-j}$ it gets in the west strip on the row $r_j$ at the west of the point $\textbf{v}$. While the robot follows each instance of $WPF(j-i, 2\lambda+q)$ it advances eastwards to the easternmost vertex $\textbf{v'}$ of a HE of a fake west pipe on the row $r_j$. If $\textbf{v'}$ is strictly at the west of $\textbf{v}$, after the robot follows the algorithm $K$ it returns to the point $\textbf{v'}$; after the robot follows the algorithm $N^{i-j}ES^{i-j}$ starting from $\textbf{v'}$, it advances to the east of $\textbf{v'}$ on the row $r_j$. By the choice of parameters, the robot eventually gets to the point $\textbf{v'}=\textbf{v}$. Immediately afterwards, it follows $K$ and it gets to the destination point. The conclusion follows.
\end{proof}
\begin{figure}
\centering
\resizebox{0.5\textwidth}{!}{
\begin{tikzpicture}
    [
        dot/.style={circle,draw=black, fill,inner sep=2pt},
        cross/.style={cross out, draw=black, minimum size=2*(#1-\pgflinewidth), inner sep=4pt, outer sep=4pt},
    ]

\foreach \x in {-5,...,4}{
 \foreach \y in {-4,...,4}{
    \node[dot] at (\x,\y){ };
}}

\draw[line width=0.7mm, red] (-5, -4) -- (-5, 4);
\draw[line width=0.7mm, red] (-4, -4) -- (-4, 4);
\draw[line width=0.7mm, red] (-3, -4) -- (-3, 4);
\draw[line width=0.7mm, red] (-2, -4) -- (-2, 4);
\draw[line width=0.7mm, red] (-1, -4) -- (-1, 4);

\draw[line width=0.7mm, red] (0, -2) -- (0, 4);

\draw[line width=0.7mm, red] (1, -1) -- (1, 4);
\draw[line width=0.7mm, red] (1, -2) -- (1, -3);

\draw[line width=0.7mm, red] (2, -2) -- (2, 4);
\draw[line width=0.7mm, red] (2, -3) -- (2, -4);

\draw[line width=0.7mm, red] (3, -1) -- (3, 4);
\draw[line width=0.7mm, red] (3, -2) -- (3, -4);

\draw[line width=0.7mm, red] (4, -4) -- (4, 4);

\draw[line width=0.7mm, red] (-5, 4) -- (4, 4);

\draw[line width=0.7mm, red] (-5, 3) -- (4, 3);

\draw[line width=0.7mm, red] (-5, 2) -- (4, 2);

\draw[line width=0.7mm, red] (-5, 1) -- (4, 1);

\draw[line width=0.7mm, red] (-5, 0) -- (4, 0);

\draw[line width=0.7mm, red] (-5, -1) -- (4, -1);

\draw[line width=0.7mm, red] (0, -2) -- (2, -2);

\draw[line width=0.7mm, red] (3, -3) -- (4, -3);

\draw[line width=0.7mm, green] (-1, 0) -- (0, 0);

\draw node[fill,circle,inner sep=5.5pt,minimum size=2pt] at (-1, -2) {};
\draw node[fill,circle,inner sep=5.5pt,minimum size=2pt] at (0, -2) {};
\draw node[fill,circle,inner sep=5.5pt,minimum size=2pt] at (-1, 2) {};
\draw node[fill,circle,inner sep=5.5pt,minimum size=2pt] at (-3, 0) {};

\draw (-0.7, -1.7) -- node {\textbf{v}} (-0.7, -1.7);
\draw (0.3, -1.7) -- node {\textbf{z}} (0.3, -1.7);
\draw (-0.7, 2.3) -- node {\textbf{w}} (-0.7, 2.3);

\draw (1,-3) node[cross] {};

\foreach \x in {-5,...,4}
    \draw (\x,.1) -- node[below,yshift=-1mm] {\x} (\x,-.1);
\foreach \y in {-4,...,4}
    \draw (.1,\y) -- node[below,xshift=-4mm, yshift=3mm] {\y} (-.1,\y);
\draw[->,line width=0.15mm] (0,-4.5) -- (0,4.5);
\draw[->,line width=0.15mm] (-5.5,0) -- (4.5,0);
\end{tikzpicture}
}
\caption{\textbf{Part III, Case (4)(viii).} We assume that there does not exist a magical west row, there does not exist a magical east row, there does not exist an upper west cutoff, there does not exist an upper west HNE, but there exists a natural special empty west row on $r_{-2} = r_j$. We assume that there are no VEs removed other than the ones shown in the figure. Let us assume that the robot starts at $(-3, 0)$ and it follows $F = N^{-2} (N^2 E S^6 N^4 W)^{10} S^4 K$, where $K = N(ES)^2$ is an algorithm with the property that if the robot follows it starting from $\textbf{v}$ it reaches the destination point. While the robot is on $r_j = r_{-2}$ strictly at the west of $\textbf{v}$, its longitude increases by one after each instance of $N^2 E S^6 N^4 W$. After the robot reaches $\textbf{v}$ and it follows $N^2 E S^6 N^4 W$, it gets to $\textbf{w}$. If the robot follows $N^2 E S^6 N^4 W$ starting from $\textbf{w}$ it gets back to $\textbf{w}$.} \label{16}
\end{figure}

\noindent
\textbf{4(viii) } We assume that there does not exist a magical west row, there does not exist a magical east row, there does not exist an upper west cutoff, there does not exist an upper west HNE, but there exists a natural special empty west row on $r_j$. Then, as in \textbf{4(vii)}, all the rows $r_m$ for $m \geq i$ are paths in the maze. Let $\textbf{v} = (a,j)$ be the easternmost vertex of the row $r_j$ in the west strip. Let $\textbf{z} = (a+1, \gamma)$ be the lowermost vertex of the westernmost upper infinite column $R_{a+1}$. Let $\textbf{w} = (a, 2i-\gamma)$. By inspecting the tertiary rectangle, we construct an algorithm $K$ that takes the robot from $\textbf{v}$ to the destination point. 

We define the algorithm
\[
F=N^j \text{ } (N^{i-j}ES^{3i-2 \gamma-j}N^{2i-2 \gamma}W)^{\lambda+q} \text{ } S^{2i-\gamma-j} \text{ } K.
\]
\textbf{Claim. } For any maze in $F_i$, after the robot follows $X \text{ } F$, it visits the destination point.
\begin{proof}
We may assume without loss of generality that after the robot follows $X \text{ }N^j$, it gets in the west strip on the row $r_j$. While the robot follows each instance of $N^{i-j}ES^{3i-2\gamma-j}N^{2i-2\gamma}W$, it advances eastwards one unit making an oscillation about the row $r_j$. By the choice of exponent, the robot eventually gets to the point $\textbf{v}$. Immediately afterwards, it follows $N^{i-j}ES^{3i-2\gamma-j}N^{2i-2\gamma}W$ and gets to the point $\textbf{w}$. The robot remains stuck at $\textbf{w}$, i.e. while it follows each instance of $N^{i-j}ES^{3i-2\gamma-j}N^{2i-2\gamma}W$, it gets back to $\textbf{w}$ (see Figure~\ref{16}). Hence after the robot follows $X \text{ } N^i \text{ } W^{\lambda-a} \text{ } S^{i-j} \text{ } (N^{i-j}ES^{3i-2\gamma-j}N^{2i-2\gamma}W)^{\lambda+q}$ $S^{2i-\gamma-j}$, it gets to $\textbf{v}$. Hence, after the robot follows $X \text{ } F$, it gets to the destination point. The conclusion follows.
\end{proof}
\noindent
\textbf{4(ix) } As a final case, we may assume that there does not exist a magical west/east row, there does not exist a special west pipe, there does not exist a natural special empty west row, there does not exist a special almost empty west row. Then all the rows are paths in the maze and hence the maze does not contain any HNE. Therefore, both the latitude and the longitude of the robot are known and, by inspecting the primary rectangle, we can write an algorithm $F$ that takes the robot from its known position to the destination point. The conclusion follows.

This finally solves \textbf{Case (4)} in which the destination point is connected to the west strip by a (finite, possibly empty) sequence of finite columns followed by a (finite, non-empty) sequence of upper infinite columns.

We have therefore treated all possible cases, as detailed in the arguments above. This completes the proof of Theorem~\ref{ch2th2}.
\end{proof}

\section{Proof of Proposition~\ref{ch2prop1}}\label{sectiunea1}
In this short section we present a proof of the slightly technical but easy Proposition~\ref{ch2prop1}.

The following observation represents the main idea of the proof.

\begin{obs}\label{ch7obs1}
Let $o,d$ be fixed vertices in $\mathbb{Z}^2$ and let $\mathbb{B}$ be a set of subgraphs of $\mathbb{Z}^2$ which is compact in the product topology. Let $A$ be a possibly infinite algorithm that solves the set of mazes $\mathbb{A}=\{(B,o,d)\mid B\in \mathbb{B}\}$. Then there exists a finite initial segment $A_0$ of $A$ that solves $\mathbb{A}$.
\end{obs}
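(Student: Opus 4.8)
The plan is to combine compactness with the elementary fact that executing a \emph{finite} algorithm inspects only finitely many edges of the board. First I would fix, for each $B\in\mathbb{B}$, a step $n(B)\in\mathbb{N}$ at which the robot following $A$ in $(B,o,d)$ visits $d$; such a step exists precisely because $A$ solves $\mathbb{A}$. Write $A^{(n)}$ for the finite initial segment $(I_i)_{i=1}^{n}$ of $A$. Then $A^{(n(B))}$ already solves $(B,o,d)$, since the robot's trajectory under $A^{(n(B))}$ agrees with its trajectory under $A$ up to time $n(B)$.

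The crux of the argument is that the set $U_B:=\{\,B'\in\mathbb{B}\mid A^{(n(B))}\text{ solves }(B',o,d)\,\}$ is relatively open in $\mathbb{B}$. Indeed, when the robot executes the $n(B)$ instructions of $A^{(n(B))}$ starting from $o$, it moves at most one lattice step per instruction, so at every intermediate time it lies within $\ell^1$-distance $n(B)$ of $o$. Consequently the entire run — and in particular whether the robot ever reaches $d$ — depends only on the values of the indicator function $f_{B'}:e(\mathbb{Z}^2)\to\{0,1\}$ on the finite set $E_{B}$ of edges incident to the ball $\{x\in\mathbb{Z}^2:\|x-o\|_1\le n(B)\}$. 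Hence $U_B$ is a finite union of basic cylinder sets of the product topology (intersected with $\mathbb{B}$), so it is relatively clopen, and trivially $B\in U_B$.

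Now $\{U_B\mid B\in\mathbb{B}\}$ is an open cover of the compact space $\mathbb{B}$, so it has a finite subcover $U_{B_1},\dots,U_{B_k}$. Set $N:=\max_{1\le j\le k}n(B_j)$ and let $A_0:=A^{(N)}$, which is a finite initial segment of $A$. Given any maze $(B',o,d)\in\mathbb{A}$, we have $B'\in U_{B_j}$ for some $j$, so the robot following $A^{(n(B_j))}$ in $(B',o,d)$ visits $d$; since $n(B_j)\le N$ and $A^{(n(B_j))}$ is an initial segment of $A_0$, the robot following $A_0$ has visited $d$ already by step $n(B_j)$. Therefore $A_0$ solves every maze in $\mathbb{A}$, as required.

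I expect the only step needing genuine care to be the openness of $U_B$, i.e. the ``finite dependence'' claim: one must verify that nothing the robot does during finitely many instructions can be affected by an edge of $B'$ lying outside the bounded region $E_B$. This is immediate from the one-step-per-instruction bound, but it is exactly this localisation that upgrades the hypothesis ``$A$ solves each $(B,o,d)$'' into the uniform conclusion via compactness; the remainder of the argument is routine.
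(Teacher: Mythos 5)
Your proof is correct and is essentially the same compactness argument as the paper's: the paper argues by contradiction via sequential compactness (choosing boards $B_i$ not solved by the length-$i$ initial segment and extracting a convergent subsequence), while you run the equivalent open-cover formulation directly, with the key locality observation — that whether a finite initial segment solves a maze depends only on the edges within a bounded region of $o$, so the set of boards it solves is open — made explicit rather than left implicit in the passage to the limit. Both yield the same conclusion; no gaps.
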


\begin{proof}
Assume for a contradiction that there does not exists such an initial segment $A_0$. For each $i\ge 1$, let $A_i$ be the initial segment of $A$ with the first $i$ instructions. By assumption, for each $i\ge 1$ there exists a board $B_i \in \mathbb{B}$ such that $A_i$ does not solve $B_i$. By compactness there exists a subsequence $(B_{i_j})_{j\ge 1}$ such that $\displaystyle \lim_{j\rightarrow \infty} B_{i_j}=B_0 \in \mathbb{B}$ in the product topology. As $A$ solves $B_0$, there exists an initial segment $A_0$ of $A$ which solves $(B_0,o,d)$. As $\displaystyle \lim_{j\rightarrow \infty} B_{i_j}=B_0 \in \mathbb{B}$, $A_0$ solves $(B_{i_j})_{j\ge 1}$ for all $j\ge |A_0|$ sufficiently large. This gives the desired contradiction.
\end{proof}

We are now ready to prove Proposition~\ref{ch2prop1}. 
\begin{proof}[Proof of Proposition~\ref{ch2prop1}]
By hypothesis (1) and (3) and by Observation~\ref{ch7obs1},for all $i\in \{1,2\}$, all origins $o\in \mathbb{Z}^2$, all destination $d\in \mathbb{Z}^2$ and all paths $P$ between $o$ and $d$, there exists a finite initial segment $A_{i,P}$ of $A_i$ that solves the set of mazes  $\{(M, o, d)\mid (M, o, d)\in \mathcal{A}_i, P\le M \}$ that contain the path $P$ (this set of mazes might be empty). By hypothesis (2), for all $i\in \{1,2\}$, all origins $o\in \mathbb{Z}^2$ and all $j\in \mathbb{N}$, there exists a finite initial segment $A_{i,o,j}$ of $A_i$ that guides the robot to visit all accessible points at distance at most $j$ from the origin $o$ in the set of mazes $\{(M, o, d)\mid (M, o, d)\in \mathcal{A}_i\}$ that have origin $o$ (notice that here the destination $d$ plays no role so we might as well drop it). But then for all $i\in \{1,2\}$ and all $j, k\in \mathbb{N}$, there exists a finite initial segment $A_{i,j,k}$ of $A_i$ such that for any origin $o$ at distance at most $k$ from $\mathbf{0}$ in the graph $\mathbb{Z}^2$, the algorithm guides the robot to visit all accessible points at distance at most $j$ from the origin $o$ in the set of mazes $\{(M, o)\mid (M, o)\in \mathcal{A}_i\}$ that have origin $o$.

In order to construct the algorithm $A$, we define the algorithms $B_i$ recursively to be $B_i=A_{\lfloor\frac{i}{2}\rfloor,(2|B_1\hdots B_{i-1}|+1),(2|B_1\hdots B_{i-1}|+1)}$ and take $A:=B_1B_2\hdots$. Clearly, the algorithm $A$ has the desired properties.
\end{proof}

\section{Open Problems}\label{conclusions}
As we emphasised in the proof of Theorem~\ref{mthm}, we strongly believe that there exists an algorithm which solves the set of all mazes with arbitrarily many HNEs and finitely many VNEs. The only case in our proof where an argument for this result breaks down is \textbf{Case 4} of \textbf{Part III}. We believe that this problem, together with Conjecture~\ref{plmconj} below could be solved using similar techniques with those developed in this paper. 
\begin{conj}\label{plmconj}
There exists an algorithm that solves the set of all mazes with arbitrarily many HNEs and arbitrarily many VNEs in one column.
\end{conj}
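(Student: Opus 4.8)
The plan is to run the argument of Theorem~\ref{ch2th2} in this restricted setting, after a normalisation. By a translation I may assume every VNE lies in the single column $c_0$, and after the $\overline{(\cdot)}$ transformation I may assume every VNE of $c_0$ has an accessible endpoint; as in the proof of Theorem~\ref{ch2th2} it then suffices to solve the resulting (still topologically closed, hence compact) family $\overline{\mathcal G}$ of boards, which also means Proposition~\ref{ch2prop1} applies to it. The decisive structural gift of the hypothesis is that every column other than $c_0$ retains all of its vertical edges, so the west strip $\mathcal W=\bigcup_{j\le-1}c_j$ and the east strip $\mathcal E=\bigcup_{j\ge1}c_j$ are each of exactly the type treated in Theorem~\ref{ch2th1} (arbitrarily many HNEs, no VNEs), while $c_0$ — the entire obstacle strip — is cut by its VNEs into maximal intervals, of which at most one is unbounded above, at most one is unbounded below, and the rest (possibly infinitely many) are finite. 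Since the primary/tertiary rectangle apparatus is no longer available (there is no single bounded region containing infinitely many intervals), I would replace the parameter $p$ by a single latitude window: a bound $B$ large enough that some origin-to-destination path, every \emph{lowest} HE between consecutive columns that path uses, and all the near-$c_0$ configurations used below lie at latitudes in $[-B,B]$; a finite such $B$ exists because the path is finite. The countable cover $\overline{\mathcal G}=\bigcup_i F_i$ is then built as in Section~\ref{fnv} over the destination, the induced subgraph on a box $[-N,N]^2$, the analogues inside the window of the tertiary-rectangle parameters, and $B$. Crucially, the ``solve from any special point'' finite algorithm $L'$ — which would now have unbounded length, there being infinitely many special vertices — is instead rebuilt \emph{at each stage}, covering only the finitely many special vertices that fall inside the window determined by $B$, by $N$, and by the length $\lambda=|X|$ of the already-committed prefix $X$; this is legitimate because in the building-block strategy the finite algorithm $A^i_X$ appended at stage $i$ is allowed to depend on $X$.

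\textbf{Parts~I and~II.} These should go through with only notational changes from \textbf{Part~I} and \textbf{Part~II} of the proof of Theorem~\ref{ch2th2}, including the remarks there on dropping the consecutive-column condition, with the finite intervals of $c_0$ playing the role of the finite columns of the obstacle strip and the (at most two) unbounded intervals of $c_0$ playing the role of the upper/lower infinite columns. Pushing the robot east with $ME(\lambda+B,\lambda+B)$ takes it out of $\mathcal W$ — whose columns, being VNE-free, keep its latitude accountable — and into $c_0$ at a latitude inside a window controlled by $\lambda+B$, hence into a bounded portion of $c_0$; a bounded-amplitude $SMW$ with a resetting inner algorithm $K=N^{c}S^{c}$, as in \textbf{Part~II, Case~(2)}, then either ferries the robot across $c_0$ into $\mathcal E$, or parks it far up (respectively far down) the unique upward- (respectively downward-) unbounded interval of $c_0$, from which a long vertical sweep followed by the generic algorithm $L$ reaches the destination, since that extreme endpoint is accessible and hence a special point. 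Applying the $\overline{(\cdot)}$ transformation again where convenient and using $SMW$, $L_W$ in place of the eastward versions, the same devices bring the robot from $\mathcal E$ back to $\mathcal W$ at latitude $0$, or else finish.

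\textbf{Part~III.} From $\mathcal W$ at latitude $0$ one finishes by the same case split as in \textbf{Part~III}: on whether the destination sits in $\mathcal W$, in $c_0$, or in $\mathcal E$, and on the local picture near $c_0$ (magical rows, pipes, ascending chains, paired HNEs, cutoffs and so on), mirroring \textbf{Cases~(1)--(4)}. The only substantive change is that each configuration the original arguments locate on the boundary of the obstacle strip can now sit arbitrarily high or low on $c_0$; this is precisely what $B$ is there to bound, after which the same $SME$, $SME^{(n)}$ (Corollary~\ref{c1}), $OME$, $WPF$ and $AME$ routines — with amplitudes and exponents taken comfortably larger than $B+\lambda+N$ — drive the robot to the relevant endpoint and then execute a finite algorithm $K$ obtained by inspecting the window. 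Correctness of $K$ over the whole class $F_i$ follows as usual, since all mazes in $F_i$ agree on the window, on $B$ and on the destination, hence on everything $K$ depends on; and where the earlier proof needed the destination to be reached from inside the obstacle strip via finite/infinite columns, the intervals of $c_0$ supply the corresponding chain, with the two infinite intervals handled exactly like the upper/lower infinite columns there.

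\textbf{The main obstacle.} The hard part is exactly where the proof of Theorem~\ref{ch2th2} stops — the \textbf{Case~4}-type configurations — now aggravated by the fact that there is no tertiary rectangle capturing all relevant ascending-chain / paired-HNE / pipe data at once. The window $B$ is the proposed substitute, but it must be reconciled with the requirement that the final algorithm be one infinite sequence independent of $B$: one has to show that, while the robot is still searching, each of its incursions into $c_0$ stays inside a window controlled by the instructions executed so far, so that it can never become permanently lost in an interval of $c_0$ lying outside that window. This is plausible, because the only intervals of $c_0$ that can detain the robot indefinitely are the at most two infinite ones, and those are entered only at a latitude the algorithm can account for, while any finite dead-end interval it strays into is escaped the next time the algorithm sweeps the robot back towards $\mathcal W$; but making this uniform, and threading it consistently through every Part~III subcase, is where the real work — and the reason the statement is still only a conjecture — lies. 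Once that is in place, Proposition~\ref{ch2prop1} (applied with $\mathcal A_1=\mathcal A_2=\overline{\mathcal G}$, and to splice with Theorem~\ref{ch2th1} on the sub-family with no VNE in $c_0$) both assembles the construction and upgrades it to visiting the destination infinitely often.
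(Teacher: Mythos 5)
This statement is left open in the paper: it is stated as Conjecture~\ref{plmconj}, and the authors offer no proof of it, only the remark that they believe it could be attacked with the techniques of Theorem~\ref{ch2th2}. So there is no proof in the paper against which to certify your argument, and your proposal itself is not a proof but an outline with a gap that you yourself name in your final paragraph.

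The gap is genuine and is exactly the point where the paper's machinery stops working. With arbitrarily many VNEs in a single column there are infinitely many special vertices and infinitely many passes, so the primary, secondary and tertiary rectangles do not exist and the finite algorithm $L'$ (whose length is a sum over \emph{all} special vertices) cannot be built; your substitute --- a latitude window $[-B,B]$ chosen per cover-class, with $L'$ rebuilt to cover only the special vertices inside the window --- restores these objects only if one can show that the robot never needs to act from a special vertex, or escape an interval of $c_0$, outside that window. That is precisely what is not established. Each time the robot crosses or enters $c_0$ it may be displaced vertically by the VNEs it meets, and since the VNEs in $c_0$ can occur at arbitrarily many latitudes, the discrepancy between the robot's true latitude and the latitude the algorithm can account for is not bounded by any function of the instructions executed so far; in particular your claim that ``any finite dead-end interval it strays into is escaped the next time the algorithm sweeps the robot back towards $\mathcal W$'' fails in general, because a finite interval of $c_0$ need only be joined to $c_{\pm1}$ at latitudes beyond the amplitude of the sweep determined by the already-committed prefix, and nothing in the construction prevents the robot from entering such an interval far outside the window. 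Making this uniform across the Part~III subcases is not a routine adaptation but the open content of the conjecture, so the proposal should be read as a plausible plan of attack rather than a proof.
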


Furthermore, we believe the following positive result to hold.
\begin{conj}
Consider the subset $\mathcal{N}$ of mazes in which the connected component of the origin is a simple (possibly infinite) path. Then there exists an algorithm that solves $\mathcal{N}$.
\end{conj}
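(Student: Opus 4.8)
The plan is to run the paper's standard machine: produce a countable cover $\mathcal{N}=\bigcup_{i\ge 1}\mathcal{N}_i$ and, for each $i$ and each finite algorithm $X$, a finite algorithm $A_X^i$ with $XA_X^i$ solving $\mathcal{N}_i$; the recursive concatenation $B_0=\emptyset$, $B_n=A^n_{B_0\cdots B_{n-1}}$, $A=B_1B_2\cdots$ then solves $\mathcal{N}$. For the cover I would index pieces by a destination $\mathbf{d}\in\mathbb{Z}^2$ and an integer $n\ge 0$, letting $\mathcal{N}_{n,\mathbf{d}}$ be the set of mazes in $\mathcal{N}$ with destination $\mathbf{d}$ and with distance exactly $n$ from $o$ to $\mathbf{d}$ along the path. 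Since $o$ and $\mathbf{d}$ always lie in the same component and that component is a simple path, this distance is finite, so the $\mathcal{N}_{n,\mathbf{d}}$ cover $\mathcal{N}$, and there are countably many of them. (If convenient one can refine the index to also record the self-avoiding walk of length $n$ from $o$ to $\mathbf{d}$, of which there are finitely many for each $n$; and the mazes whose origin-component is a \emph{finite} path could instead be handled at the start as a single countable family.)

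Now fix $\mathcal{N}_{n,\mathbf{d}}$ and a finite algorithm $X$, and set $\lambda:=|X|$. After following $X$ in any maze of this piece the robot sits at some vertex $v$ of the path $P$ with $d_P(o,v)\le\lambda$, hence $d_P(v,\mathbf{d})\le n+\lambda=:N$. So it suffices to establish the following uniform exploration lemma: for each $N\ge 1$ there is a finite algorithm $T_N$ such that in any maze whose origin-component is a simple path, a robot starting at an arbitrary vertex of the path visits, after following $T_N$, every vertex of the path within distance $N$ of its starting point; then $A_X^{n,\mathbf{d}}:=T_N$ works. Two structural features of lattice paths help here: consecutive edge colours along $P$ are never opposite (no backtracking), so the radius-$N$ ball around a vertex is one of finitely many admissible colour-words of length $\le 2N$; and a single ``ring probe'' such as $EWSN$ already behaves well — from a turn vertex with edges $\{E,S\}$ it visits both neighbours and returns to $v$, from a straight vertex $\{E,W\}$ it visits the east neighbour and returns, etc.

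The construction of $T_N$ is the crux and the place I expect the real difficulty. The naive depth-first recursion $T_k=\prod_{c}c\,T_{k-1}\,c^{-1}$ fails: if the current vertex has a $c^{-1}$-edge but no $c$-edge, the instruction $c$ does nothing while the trailing $c^{-1}$ carries the robot away. In fact there is \emph{no} fixed finite word that, from every vertex of every simple-path maze, visits both neighbours and returns to its start — comparing a turn vertex with edge-set $\{E,N\}$ against one with $\{E,S\}$ shows the same probing block is simultaneously forced and forbidden. Hence a bounded, unavoidable ``drift'' of the robot (at most $2k$ along the path per depth-$k$ probe) must be absorbed into the design. The route I would take: build a ring-explorer $R$ (a short concatenation of four-cycle probes like $EWSN$, $WESN$, $SNEW$, $NSEW$) that, for \emph{each} local vertex type, visits all neighbours and ends at one of finitely many known-relative positions; then iterate, at each stage carrying along the finitely many possible current positions and exploring one ring around each, so that after enough rings the radius-$N$ ball around the original position is covered regardless of how the drift resolved. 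Making the bookkeeping of ``possible positions'' terminate and stay finite — essentially a controlled breadth-first simulation of all consistent scenarios — is the technical heart, and I would expect it to require a case analysis in the spirit of the proof of Theorem~\ref{ch2th2}.

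As alternatives or safety nets I would keep two tools in reserve. First, compactness: the set $\{M:(M,o,d)\in\mathcal{N},\ P\le M\}$ of boards containing a fixed path $P$ is closed in the product topology (a limit of simple paths containing $P$ again has all degrees $\le 2$ and contains $P$, and no cycle can appear, so it is a simple path containing $P$), hence compact, and $\mathcal{N}$ satisfies condition (2) of Proposition~\ref{ch2prop1}; so by Observation~\ref{ch7obs1} it would be enough to exhibit $\mathcal{N}$ as a countable union of families already known to be solvable, and I would search for such a decomposition (splitting off, for each fixed local shape near $o$ and each fixed coarse winding behaviour, a sub-family amenable to an $SME$-type algorithm as in Lemma~\ref{l2}). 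Second, the $SME$/$MW$/$ME$ building blocks themselves: their ``advance and never retreat'' guarantees are exactly the kind of monotonicity that could be used to neutralise the drift, by decomposing according to the coarse geometry of the path (how many times and in what pattern it turns before reaching $\mathbf{d}$, how far it strays) and constructing a tailored finish algorithm in each case. The single feature that is genuinely new relative to the easy set $\mathcal{P}$ — guiding a blind robot along an \emph{arbitrarily winding} simple path — is what I expect to be the main obstacle, and the above is the shape of the argument I would push through.
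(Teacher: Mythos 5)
This statement is left in the paper only as a conjecture in the open-problems section; there is no proof to compare yours against, so the only standard is completeness, and by that standard your proposal has a genuine gap. The covering scheme and the reduction to a ``uniform exploration lemma'' (a finite algorithm $T_N$ that, started at an arbitrary vertex of an arbitrary simple-path maze, visits every vertex within path-distance $N$ of its start) are sound as far as they go, but that lemma \emph{is} the conjecture in compact form, and you do not prove it. Worse, the partial analysis you supply points against your own strategy: you correctly show that no fixed finite word can, from every vertex type, visit both neighbours and return to its start, so every probe incurs a drift whose sign and magnitude depend on the unknown local configuration. The proposed remedy --- iterating ring probes while tracking the finitely many possible current positions --- is exactly where the proof must happen and does not happen. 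The set of consistent scenarios is not obviously finite or collapsing: each probe can carry the robot up to its own length along the path, into territory whose shape is completely unconstrained by the covering index $(n,\mathbf{d})$, so after $k$ probes the ``possible positions'' live on a stretch of path of length growing with $k$ and the number of consistent (position, local-shape) pairs grows with it. You give no invariant bounding the drift, no mechanism that merges or eliminates scenarios, and no termination argument. The compactness safety net does not rescue this: Observation~\ref{ch7obs1} turns an infinite solving algorithm into a finite one, but you must first exhibit a solving algorithm, which is the point at issue.

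A further caution on the reduction itself. The step $d_P(v,\mathbf{d})\le n+\lambda$ after following $X$ is fine, but $T_N$ must then work from a vertex about which nothing is known beyond its lying on \emph{some} simple path: the radius-$N$ ball around it in the path metric ranges over all self-avoiding colour-words of length up to $2N$ (finitely many, but exponentially many in $N$), and the robot cannot distinguish them without probing, which moves it into the unindexed part of the maze. This is precisely the obstruction that leads the authors to leave the statement as a conjecture; your writeup diagnoses it accurately but does not overcome it, so what you have is a credible research plan rather than a proof.
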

In the opposite direction, we believe the following to be true.
\begin{conj}\label{mconj}
There is no algorithm that solves the class $\mathcal{M}$ of all mazes.
\end{conj}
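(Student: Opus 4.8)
The plan is to argue by contradiction: assume some algorithm $A=(I_i)_{i=1}^{\infty}$ solves every maze in $\mathcal{M}$, and exhibit a maze it fails on. I would begin with two easy reductions. First, if $A$ contains only finitely many instructions equal to $E$, then the longitude of the robot following $A$ in \emph{any} board increases at most $|A|_E$ times, so taking the board to be $\Z^2$ with no edges removed, origin $(0,0)$ and destination $(|A|_E+1,0)$ gives a maze in $\mathcal{M}$ not solved by $A$; symmetrically for $N$, $S$ and $W$. Hence we may assume $A$ contains infinitely many instructions of each of the four cardinal directions. Second, consider the \emph{free trajectory} of $A$, the walk $f_0=(0,0)$, $f_k=f_{k-1}+\mathrm{dir}(I_k)$ in $\Z^2$, which is exactly the trajectory of the robot following $A$ in the full board $\Z^2$; if this walk is bounded, say contained in $[-N,N]^2$, then the maze with board $\Z^2$ (no edges removed), origin $(0,0)$ and destination $(N+1,0)$ lies in $\mathcal{M}$ and is not solved by $A$, and we are done. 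So we may assume the free trajectory is unbounded.

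The unbounded case is the heart of the matter, and here the plan is to exploit that a maze in $\mathcal{M}$ only requires the origin and destination to lie in a common component — every other vertex may be left isolated. Thus it suffices to build a \emph{connected} subgraph $C\subseteq\Z^2$ together with two vertices $o,d\in C$ such that the robot following $A$ from $o$ inside $C$ never visits $d$ (one may take $C$ finite, so that the robot's trajectory is confined to $C$ and it is enough that some vertex of $C$ is never visited). Concretely I would build $C$ by an offline construction: since $A$ is fixed and known, simulate the robot and commit edges of $C$ only when the robot first reaches their endpoints, always choosing to omit grid edges (erect ``walls'') so as to funnel the robot back on itself — the board grows as a long winding corridor studded with branches, each new turn or branch being placed, after inspecting the relevant finite prefix of $A$, so that the robot is deflected backwards rather than forwards. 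The target lemma to establish is: \emph{for every algorithm $A$ there is a finite connected subgraph $C$ of $\Z^2$ and vertices $o,d\in C$ such that the robot following $A$ from $o$ in $C$ never visits $d$}; granting this, the board $C$ with all edges outside $C$ removed gives the required maze in $\mathcal{M}$.

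I expect the main obstacle — indeed, the reason this remains conjectural rather than proved — to be exactly the proof of that target lemma for a completely arbitrary $A$. A naive corridor suffices for many algorithms but cannot work in general: the subsequent conjecture that the class $\mathcal{N}$ of path‑mazes is solvable asserts that a single algorithm navigates \emph{all} induced paths with marked endpoints, so the board $C$ must genuinely branch, and one must show that no matter how ``robustly navigating'' $A$ is, some finite tree of walls stalls it forever. Making the adaptive deflection argument rigorous, so that every branch the robot might later take is blocked or leads only to dead ends the robot cannot escape (without ever being forced to disconnect the piece containing $d$), is the crux.

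A complementary line of attack, which I would also pursue, is via compactness: by Observation~\ref{ch7obs1}, applied to the compact family of boards containing a fixed path between $o$ and $d$, if $A$ solved $\mathcal{M}$ then for every origin, destination and connecting path a \emph{finite} prefix of $A$ would already solve all such mazes; one would then try to diagonalise a single bad board against this countable family of finite prefixes, using the walling‑off gadgets above to defeat each prefix while keeping the construction consistent. Either way, the difficulty is concentrated in the same place — producing, for an adversarial $A$, a two‑dimensional branching obstacle that traps it — and this is precisely what makes the statement (and the underlying Leader–Spink question) hard.
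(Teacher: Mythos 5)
This statement is an open conjecture in the paper (Conjecture~\ref{mconj} in Section~\S\ref{conclusions}); the authors offer no proof, so there is nothing to compare your argument against, and what you have written is, as you yourself say, a plan rather than a proof. Your two preliminary reductions (finitely many instructions in some direction, and a bounded free trajectory) are correct but routine; the entire difficulty is concentrated in your ``target lemma'', which you leave unproved and which is essentially a restatement of the conjecture itself.

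There is, moreover, a concrete error in the form in which you state that lemma. You claim that one may take the trapping subgraph $C$ to be \emph{finite}, and your proposed construction (``a finite tree of walls stalls it forever'') relies on this. That version of the lemma is provably false. The set of mazes in $\mathcal{M}$ whose accessible component (the component of the origin) is a fixed finite connected subgraph is parametrised by countably many data, so the collection of \emph{all} such mazes is countable; by the enumeration argument in Section~\S\ref{prelims} of the paper (concatenate solving algorithms one maze at a time, tracking the robot's position), there exists a single algorithm $A^*$ that solves every maze with finite accessible component. For $A^*$ no finite $C$ and no choice of $o,d\in C$ can work, so your target lemma fails for $A^*$. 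Consequently any counterexample board, if one exists, must have an infinite accessible component, and the adaptive walling-off construction must be carried out over infinitely many stages while keeping the component of $d$ connected --- which is exactly where the known approaches (and the compactness route via Observation~\ref{ch7obs1}, which only yields finite prefixes for \emph{compact} families containing a fixed path) break down. Your proposal correctly locates the difficulty but does not close it, and the finite-$C$ shortcut it leans on is not available.
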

From another perspective, let us call $\mathcal{M}_k \subseteq \mathcal{M}$ the set of mazes for which the destination is at distance $k$ from the origin. From Proposition~\ref{ch2prop1}, the following conjecture is equivalent to Conjecture~\ref{mconj}.
\begin{conj}\label{ojlk}
There exists a $k$ for which $\mathcal{M}_k$ is not solvable.
\end{conj}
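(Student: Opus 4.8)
The plan is to fix an arbitrary infinite algorithm $A = (I_i)_{i\ge 1}$ and, for some absolute constant $k$ chosen \emph{before} seeing $A$, construct a maze $(M,\textbf{o},\textbf{d})\in\mathcal{M}_k$ on which the robot following $A$ never reaches $\textbf{d}$; doing this for every $A$ shows that no algorithm solves $\mathcal{M}_k$. By translation we may take $\textbf{o}=(0,0)$. I would build $M$ adversarially and reactively: run the robot through $A$ one instruction at a time, and commit to the presence or absence of each edge of $\Z^2$ only when the robot first attempts to cross it, always choosing so as to keep the robot inside a prescribed ``safe region'' that excludes $\textbf{d}$.

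In detail, I would fix in advance a region $S\subseteq\Z^2$ with $\textbf{o}\in S$ and a simple path $P$ of length exactly $k$ from $\textbf{o}$ to a vertex $\textbf{d}\notin S$, chosen so that $P$ meets $S$ only at $\textbf{o}$; the edges of $P$ are declared present from the outset. The invariant I would maintain is that after processing $t$ instructions the robot still lies in $S$. When instruction $I_{t+1}$ would move the robot from its current vertex $v$ to the adjacent vertex $w$: if $w\in S$ and $\{v,w\}$ is not the first edge of $P$, declare $\{v,w\}$ present (the robot moves to $w$); otherwise declare $\{v,w\}$ absent (the robot stays at $v$). Taking $M$ to be $\Z^2$ with exactly the edges declared present together with the edges of $P$, the vertices $\textbf{o}$ and $\textbf{d}$ lie in one connected component, and provided $S$ contains no shorter route to $\textbf{d}$ we get $\mathrm{dist}_M(\textbf{o},\textbf{d})=|P|=k$; meanwhile the robot's entire trajectory stays in $S$ and never uses the first edge of $P$, hence it never enters $P\setminus\{\textbf{o}\}$, in particular never visits $\textbf{d}$, so $A$ does not solve $M$.

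The hard part — and the reason this is still only a conjecture — will be choosing $S$ and $P$ so that this invariant is genuinely maintainable. The essential danger is that the robot is at $\textbf{o}$ at step $0$ and may be pushed back there repeatedly; the first edge of $P$ is incident with $\textbf{o}$ and, lying on a shortest $\textbf{o}$--$\textbf{d}$ path, must be present, so we cannot block it, and if the robot ever sits at $\textbf{o}$ reading the cardinal instruction that points into $P$ it escapes toward $\textbf{d}$. In the abstract (non-lattice) setting one evades exactly this by colouring the forward edges of a ``runaway ray'' with distinct fresh colours, but on $\Z^2$ the four colours are forced, so a ray following $A$ will in general self-intersect and re-enter $\textbf{o}$ — and controlling this is precisely the obstruction that also leaves Conjecture~\ref{mconj} open. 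I would attack it by taking $S$ to be a quarter-plane, or a widening staircase, glued to $\textbf{o}$ along a single boundary edge, and routing $P$ far around the opposite side so that $P$ has room to wind without ever touching $S$; this is what forces $k$ to be taken large. This must be combined with an argument — most plausibly a finite case analysis on the first few symbols of $A$ — showing that the single cardinal direction at $\textbf{o}$ pointing into $P$ can always be kept from occurring while the robot is at $\textbf{o}$ (for instance by letting the robot leave $\textbf{o}$ in one of the other three directions early on and then never re-admitting it into $\textbf{o}$). An alternative is a counting argument: for fixed $k$ the ``escape configurations'' that $A$ must eventually be steered into form a bounded family, and one would try to engineer a single runaway region $S$ that dodges all of them simultaneously; making this dodging precise is the crux either way.
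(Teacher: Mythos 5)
This statement is labelled a \emph{conjecture} in the paper: the authors offer no proof of it, only the remark that, via Proposition~\ref{ch2prop1} (which lets one combine solving algorithms for the countably many classes $\mathcal{M}_k$), it is equivalent to Conjecture~\ref{mconj} that the full class $\mathcal{M}$ is unsolvable. So there is no proof in the paper to compare yours against, and your proposal does not close the gap either --- as you yourself say, it is a strategy sketch whose decisive step is missing. To be concrete about where it fails: with your stated revelation rule (grant every edge whose far endpoint lies in $S$, deny everything else except the pre-committed path $P$), the adversary actually has \emph{no} remaining choices --- the robot's trajectory inside $S$ is completely determined by $A$, since it is never blocked except at $\partial S$ and at the first edge of $P$. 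There is then no mechanism to prevent an arbitrary $A$ from returning the robot to $\textbf{o}$ and issuing the one cardinal direction that points into $P$; that edge must be present (it lies on the unique $\textbf{o}$--$\textbf{d}$ path, and $\mathcal{M}$ requires $\textbf{o}$ and $\textbf{d}$ to be connected), and commitments are irrevocable, so the robot escapes. The moment you allow the adversary to also deny edges \emph{inside} $S$ to steer the robot away from $\textbf{o}$, the robot's position becomes genuinely unknown to you as well, and you are back to the full difficulty of the problem; no finite case analysis on the first few symbols of $A$ can settle this, because $A$ is infinite and the dangerous configuration can recur unboundedly often.

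A further constraint worth keeping in mind: any maze you produce must lie outside every class the paper proves solvable. In particular, by Theorem~\ref{ch2th2} combined with Proposition~\ref{ch2prop1}, your final board cannot have its vertical non-edges confined to finitely many consecutive columns, and by Theorem~\ref{ch2th1} it cannot be free of vertical non-edges altogether. Your ``explored region plus $P$'' construction does produce many non-edges, but you have not verified that the resulting maze avoids these solvable classes for \emph{every} $A$, nor that $\mathrm{dist}_M(\textbf{o},\textbf{d})=k$ survives the granting of all attempted edges in $S$ (you need $S$ itself to contain no vertex of $P$ other than $\textbf{o}$ \emph{and} no second route out of $S$ toward $\textbf{d}$, which is fine for a quarter-plane but must be stated). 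The proposal is a reasonable articulation of why the conjecture is plausible and where the obstruction lies, but it is not a proof, and the statement remains open in the paper.
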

Perhaps the following stronger results also hold.
\begin{conj}\label{pbneb}
Let $\mathcal{N}_3 \subset \mathcal{M}$ be the set of all mazes for which there are only HNEs between the pairs of columns $(c_{-4}, c_{-3})$ and $(c_3, c_4)$. Then there is no algorithm that solves $\mathcal{N}_3$.
\end{conj}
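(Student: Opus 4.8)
The plan is, given any algorithm $A=(I_i)_{i=1}^{\infty}$, to construct an explicit maze $M\in\mathcal{N}_3$ together with an origin $o$ and a destination $d$ in the same component of $M$ on which $A$ fails, i.e. the robot following $A$ from $o$ in $M$ never visits $d$. Equivalently (and this is the form I will use), it suffices to produce $M\in\mathcal{N}_3$ and a vertex $o$ so that the trajectory of $A$ from $o$ omits \emph{some} vertex reachable from $o$; any omitted reachable vertex may then be taken as the destination. Write $C=\{-3,\dots,3\}\times\Z$ for the \emph{core strip}, $L$ for the union of the columns $c_{-4},c_{-5},\dots$, and $R$ for $c_4,c_5,\dots$. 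Every maze in $\mathcal{N}_3$ is obtained from $\Z^2$ by specifying a set $Y_L\subseteq\Z$ of latitudes at which $c_{-4}$ and $c_{-3}$ are joined and a set $Y_R\subseteq\Z$ at which $c_3$ and $c_4$ are joined, while $C$, $L$, $R$ are always internally intact; so the entire freedom lies in the two ``walls''.

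First I would dispose of the easy configurations. Let $M_0$ be the maze with $Y_L=Y_R=\varnothing$ and $o=(0,0)$; since both wings are detached, the robot stays in $C$ forever and its trajectory $T$ is deterministic. If $T$ omits a vertex $v\in C$, take $M=M_0$, $d=v$ and we are done. Otherwise $T$ covers $C$; let $S_L$ (resp. $S_R$) be the set of latitudes $y$ at which $T$ at some step sits at $(-3,y)$ with next instruction $W$ (resp. at $(3,y)$ with next instruction $E$). If $S_L\neq\Z$, choose $y_0\notin S_L$ and let $M$ have $Y_L=\{y_0\}$, $Y_R=\varnothing$, $o=(0,0)$, $d=(-5,0)$: the robot never attempts to use the single new edge, so its trajectory is still $T$, it stays in $C$, and it misses $d\in L$. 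The case $S_R\neq\Z$ is symmetric. Thus the genuinely hard case is $S_L=S_R=\Z$, in which the robot, left alone, tries to exit the core westwards and eastwards at \emph{every} latitude, so no finite modification of the walls can keep it in $C$.

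For the hard case I would run an adversary that repeatedly ``redirects'' the robot between the two wings. The basic move: choose a latitude $y_0$ and let $t_0$ be the first time $T$ attempts $W$ from $(-3,y_0)$ (which exists since $S_L=\Z$, and $t_0\ge|y_0|$ since by time $t$ the robot is within distance $t$ of $(0,0)$, so $y_0$ may be taken as large as we like). Put a single HE at $(-4,y_0)$–$(-3,y_0)$: the robot follows $T$ up to time $t_0$ and then enters $L$. If it now stays in $L$ forever, its whole trajectory is the core-segment $T[0..t_0]$ followed by a walk in $L$; since $T$ is finite up to $t_0$ but covers all of $C$, there is a vertex $v\in C$ first visited by $T$ only after $t_0$, and that $v$ is never visited, so $d=v$ (with $Y_R=\varnothing$) finishes. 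Otherwise the robot re-enters $C$ and is again executing $A$ in an essentially intact core region, and one repeats the construction — using the right wall next time, or a fresh uncommitted left latitude — bouncing the robot alternately into $L$ and $R$ at fresh, far-away latitudes so that newly committed edges never disturb earlier parts of the trajectory. The sets $Y_L,Y_R$ and the destination $d$ are then produced as the pointwise limit of these choices, the limit maze still being a maze in $\mathcal{N}_3$ with $o$ connected to $d$.

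The main obstacle is exactly the ``robot always returns'' sub-case: one must show the adversary can force an \emph{eternal} chase — that it can always eventually drive the robot out of whichever wing it currently occupies — and that the resulting infinite trajectory still omits a vertex reachable from $o$. This is the analogue of the argument in Section~\ref{prelims} that no algorithm solves all mazes (a path on which the robot is perpetually led the wrong way), but it is substantially harder, because a two-walled copy of $\Z^2$ is highly connected: the robot cannot be trapped in a bounded region, so the adversary must exploit the \emph{unknown} crossing latitudes rather than any topological obstruction. I expect that making this precise requires either a quantitative refinement of the easy cases (``if the core-trajectory covers $C$ too quickly then the robot is forced to attempt exits it cannot afford''), or a global compactness argument in the spirit of Observation~\ref{ch7obs1}, together with bookkeeping that keeps the crossing latitudes chosen at different stages mutually consistent with a single fixed $d$. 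By Proposition~\ref{ch2prop1} it would suffice to do this for destinations at a single bounded distance from the origin, which may lighten the bookkeeping.
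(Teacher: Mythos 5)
This statement is one of the paper's open problems: it is stated as Conjecture~\ref{pbneb} precisely because the authors do not have a proof, so there is no argument in the paper to compare yours against. Your proposal should therefore be judged on its own, and as written it is not a proof. The preliminary reductions are sound: the trajectory in the fully-walled maze $M_0$ is deterministic and confined to the core strip, the case where it misses a core vertex is immediate, and the case where some latitude is never "attempted" on one side is correctly handled by opening a single edge there (the trajectory never queries that edge, so it is unchanged, and the wing vertex $(-5,0)$ is reachable but unvisited). But all of this only isolates the hard case, and the hard case --- the adversarial "eternal chase" --- is exactly the mathematical content of the conjecture. You acknowledge this yourself in your final paragraph, so what you have is a correct framing of the problem rather than a solution.

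To make the gap concrete: your chase argument relies on being able to open each new wall edge at a "fresh, far-away latitude so that newly committed edges never disturb earlier parts of the trajectory." That protects the \emph{past} of the trajectory, but the destination must be missed by the \emph{future} of the trajectory, and after the robot's first excursion into a wing the shielding argument collapses. The wings are internally complete half-planes, so once the robot is inside $L$ it can reach $(-4,y)$ for every $y$; its subsequent behaviour therefore depends on the entire set $Y_L$, including edges the adversary has not yet decided, and a latitude being "far away" no longer guarantees it is irrelevant. Moreover, even if the chase can be sustained forever, you give no mechanism for exhibiting a reachable vertex that the resulting infinite trajectory omits: the limit trajectory enters and leaves both wings infinitely often and could a priori be space-filling. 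Closing either of these two holes (sustaining the chase against an adversary-dependent future, and extracting an omitted reachable vertex from the limit) is where a genuinely new idea is needed; a compactness argument in the spirit of Observation~\ref{ch7obs1} may help organise the bookkeeping but does not by itself supply either ingredient.
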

\begin{conj}
Conjecture~\ref{ojlk} holds for $k=10$.
\end{conj}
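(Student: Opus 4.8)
The plan is to argue by contradiction: assume $A$ is an (a priori infinite) algorithm that solves $\mathcal{M}_{10}$, and produce a maze in $\mathcal{M}_{10}$ that $A$ fails to solve.

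\textbf{Step 1 (reduction to finite algorithms).} By translation invariance of the model we may assume every maze in $\mathcal{M}_{10}$ has origin $o=(0,0)$; the destination $d$ then ranges over the finitely many vertices at $\ell^1$-distance in $\{2,4,6,8,10\}$ from $o$. For each such $d$ let $\mathbb{B}_d$ be the set of boards $B$ with $d_B(o,d)=10$. The condition $d_B(o,d)\le 10$ is determined by the restriction of $B$ to the finite ball of radius $10$ about $o$, and similarly $d_B(o,d)\ge 10$ is determined by the ball of radius $9$; hence $\mathbb{B}_d$ is clopen in the product topology, in particular compact. Applying Observation~\ref{ch7obs1} to each of these finitely many compact sets yields finite initial segments of $A$ solving $\{(B,o,d):B\in\mathbb{B}_d\}$, and taking the longest one produces a single finite initial segment $A_0$ of $A$ that solves all of $\mathcal{M}_{10}$ (once a destination is visited it stays visited). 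So it suffices to prove the combinatorial statement: \emph{for every finite algorithm $A_0$ there is a maze in $\mathcal{M}_{10}$ that $A_0$ does not solve.}

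\textbf{Step 2 (the trapping maze).} Given a finite $A_0=(I_1,\dots,I_n)$, I would take the board $M$ to be a single self-avoiding path $\pi=(w_0,w_1,\dots,w_{10})$ of length $10$ in $\mathbb{Z}^2$ (all other edges of $\mathbb{Z}^2$ deleted), with origin $o=w_0$ and destination $d=w_{10}$. Then $(M,o,d)\in\mathcal{M}_{10}$ automatically, and the robot following $A_0$ is confined to the vertices of $\pi$; it solves $M$ if and only if its walk on $\pi$ ever reaches $w_{10}$. The task is therefore to choose the direction sequence $c_1,\dots,c_{10}\in\{N,S,E,W\}$ of the edges of $\pi$, subject only to $\pi$ being self-avoiding (equivalently, the partial sums $0,\hat c_1,\hat c_1+\hat c_2,\dots$ are distinct), so that the robot, started at $w_0$, never reaches $w_{10}$ while executing $A_0$.

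\textbf{Step 3 (choosing $\pi$ adaptively).} I would reveal the edges of $\pi$ greedily, one at a time. Having fixed $c_1,\dots,c_{k-1}$ and run the robot until it first arrives at $w_{k-1}$ (if it never does, complete $\pi$ arbitrarily and we are done), I would pick $c_k$ among its at most three admissible values so as to delay, or forbid outright, the robot's first passage to $w_k$: one wants, among the next occurrences in $A_0$ of the two ``active'' directions at the degree-$2$ vertex $w_{k-1}$ — namely $c_k$ and $c_{k-1}^{-1}$ — the one that sends the robot \emph{back} toward $w_{k-2}$ to come first. A counting argument — there are only four directions, the walk on $\pi$ is a reflected walk on an interval, and each backward push consumes a definite chunk of $A_0$ — should show that a path of length $10$ always suffices to keep the robot from ever getting $10$ steps ``ahead'', while shorter lengths need not; this is precisely where the constant $k=10$ should enter. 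Once this is in hand, Conjecture~\ref{ojlk}, and hence via Proposition~\ref{ch2prop1} also Conjecture~\ref{mconj}, would follow for $k=10$.

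\textbf{Main obstacle.} Step~1 is soft; the crux is the combinatorial Step~3. One must control the robot's repeated returns to $w_{k-1}$ (so that a naive first-occurrence choice of $c_k$ can fail on a later visit), reconcile the greedy direction choices with self-avoidance in $\mathbb{Z}^2$ (which can veto exactly the direction one wants), and pin down the exact length at which a trapping path always exists. It is quite plausible that a pure corridor is too rigid and one must instead allow a small ``labyrinth'' of diameter $10$ built from horizontal and vertical non-edges — the funnels of Lemma~\ref{l2} used here to \emph{constrain} rather than to locate the robot — and proving that such a gadget of diameter exactly $10$ always defeats a prescribed finite $A_0$ is, I expect, the hard part.
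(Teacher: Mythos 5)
This statement is one of the paper's open conjectures: the authors offer no proof of it, so there is nothing to compare your argument against, and your proposal would need to stand on its own. It does not. Your Step 1 (compactness reduction to a finite initial segment $A_0$) is sound and is essentially Observation~\ref{ch7obs1}, but Steps 2--3 are not merely incomplete -- the plan is provably doomed. The set of mazes whose board is a self-avoiding path of length $10$ starting at the origin is \emph{finite} (there are at most $4\cdot 3^{9}$ such paths), and by the enumeration argument in Section~\S\ref{prelims} any finite (indeed any countable) set of mazes is solved by a single finite algorithm: solve them one by one while tracking the robot's position in each. Hence there exists a finite $A_0$ that solves every length-$10$ corridor simultaneously, and against that $A_0$ no adaptive choice of $\pi$ in your Step 3 can succeed. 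The same objection kills the fallback you mention at the end: any ``gadget of diameter $10$'' is a finite board inside a bounded window, there are only finitely many of them, and the whole family is again solvable by one finite algorithm. Note also that the authors themselves conjecture (their Conjecture on the set $\mathcal{N}$ of path mazes) that corridor mazes \emph{are} solvable, which is the opposite of what your Step 2 needs.

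The structural lesson is that a witness to the unsolvability of $\mathcal{M}_{10}$ cannot be a finite collection of finite mazes, nor any family confined to a bounded region: it must exploit the fact that a maze in $\mathcal{M}_{10}$ has a destination at distance $10$ but a connected component that may extend arbitrarily far, so that the adversary's power lies in losing the robot in the unbounded part of the board while the nearby destination stays unvisited. (Compare the paper's non-solvability example for the set of all mazes, which uses an infinite ray -- and even that relies on freely choosing the edge colouring, a freedom absent here since the $\mathbb{Z}^2$ colouring is forced by cardinal directions.) Your compactness step correctly reduces the problem to defeating each finite $A_0$, but the combinatorial heart -- constructing, for each finite $A_0$, a maze in $\mathcal{M}_{10}$ with an unbounded trap that $A_0$ misses -- is exactly the open problem, and your corridor construction does not address it.
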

Conjecture~\ref{pbneb} is one of the main reasons why we think Conjecture~\ref{mconj} holds. 

Finally, we strongly believe that the classes of mazes in higher dimensions arising from the lattice $\mathbb{Z}^k$ with suitable mild restrictions should represent a captivating further study.

\end{document}